\newcommand{\N}{\mathbb{N}}
\newcommand{\R}{\mathbb{R}}
\newcommand{\Z}{\mathbb{Z}}
\newcommand{\PP}{\mathbb{P}}
\newcommand{\E}{\mathbb{E}}
\newcommand{\A}{\mathcal{A}}
\newcommand{\Natural}{\mathbb N}
\newcommand{\Real}{\mathbb R}
\newcommand{\abs}[1]{\left\vert#1\right\vert}
\newcommand{\set}[1]{\left\{#1\right\}}
\newcommand{\restricted}{\mathord{\upharpoonright}}
\newcommand{\ep}{\varepsilon}
\newcommand\restr[2]{{
		\left.\kern-\nulldelimiterspace 
		#1 
		\littletaller 
		\right|_{#2} 
}}
\newcommand{\littletaller}{\mathchoice{\vphantom{\big|}}{}{}{}}
\definecolor{egraf}{rgb}{0.2,0.4,0}
\newcommand{\norm}[1]{\left\Vert#1\right\Vert}
\newcommand{\duality}[1]{\left\langle#1\right\rangle}
\newcommand{\Free}{{\mathcal F}}
\newcommand{\Lip}{{\mathrm{Lip}}}
\newcommand{\F}{\mathcal{F}}              
\newcommand{\conv}{\mathop\mathrm{conv}}
\newcommand{\supp}{\mathop\mathrm{supp}}
\newcommand{\eps}{\varepsilon}
\def\<{\langle}
\def\>{\rangle}
\newcommand{\Id}{Id}
\newcommand{\DP}{{\mathrm{DP}}}
\renewcommand{\H}{{\mathcal{H}}}
\newcommand{\CF}{{\mathrm{CF}}}
\newcommand{\RN}{{\mathrm{RN}}}
\newcommand{\NN}{\mathbb{N}}                                
\newcommand{\RR}{\mathbb{R}}                                
\newcommand{\lipfree}[1]{\mathcal{F}({#1})}                 
\newcommand{\cl}[1]{\overline{#1}}                          
\DeclareMathOperator{\lip}{lip}                             
\renewcommand{\hat}{\widehat}
\theoremstyle{plain}
\newtheorem{thm}{Theorem}[section]
\newtheorem{cor}[thm]{Corollary}
\newtheorem{corollary}[thm]{Corollary}
\newtheorem{lem}[thm]{Lemma}
\newtheorem{lemma}[thm]{Lemma}
\newtheorem{prop}[thm]{Proposition}
\newtheorem{proposition}[thm]{Proposition}
\newtheorem*{KirchLemma}{Kirchheim's lemma}
\newtheorem*{KirchThm}{Theorem}
\theoremstyle{definition}
\newtheorem{definition}[thm]{Definition}
\newtheorem{remark}[thm]{Remark}
\newtheorem{example}[thm]{Example}
\newtheorem{quest}{Question}
\author[G. Flores]{Gonzalo Flores}
\address[G. Flores]{Universidad de O'Higgins, Instituto de Ciencias de la Ingenier\'ia, Av. Lib. Gral. Bernardo O'Higgins 611, Rancagua, Chile}
\email{gonzalo.flores@uoh.cl}
\author[M. Jung]{Mingu Jung}
\address[M. Jung]{June E Huh Center for Mathematical Challenges, Korea Institute for Advanced Study, 02455 Seoul, Republic of Korea}
\email{jmingoo@kias.re.kr}
\author[G. Lancien]{Gilles Lancien}
\address[G. Lancien]{Université Marie et Louis Pasteur, CNRS, LmB (UMR 6623), F-25000 Besançon, France.}
\email{gilles.lancien@univ-fcomte.fr}
\author[C. Petitjean]{Colin Petitjean}
\address[C. Petitjean]{Univ Gustave Eiffel, Univ Paris Est Creteil, CNRS, LAMA UMR8050, F-77447 Marne-la-Vallée, France}
\email{colin.petitjean@univ-eiffel.fr}
\author[A. Prochazka]{Anton\'in Prochazka}
\address[A. Prochazka]{Université Marie et Louis Pasteur, CNRS, LmB (UMR 6623), F-25000 Besançon, France}
\email{antonin.prochazka@univ-fcomte.fr}
\author[A. Quilis]{Andr\'es Quilis}
\address[A. Quilis]{Institute of Mathematics and Statistics, University of Tartu, Narva mnt 18, 51009 Tartu, Estonia}
\email{andresqsa@gmail.com}
\begin{document}
	\title[On curve-flat Lipschitz functions and their linearizations]{On curve-flat Lipschitz functions and their linearizations}
	
	\subjclass[2020]{Primary 47B01, 47B07, 47B33; Secondary 46B20, 54E35}

	\keywords{Lipschitz-free space, Lipschitz operator, Radon-Nikodym, Dunford-Pettis, p-1-u metric space, Completely continuous}

	\begin{abstract}
        We show that several operator ideals coincide when intersected with the class of linearizations of Lipschitz maps.
        In particular, we show that the linearization $\hat{f}$ of a Lipschitz map $f:M\to N$ is Dunford-Pettis if and only if it is Radon-Nikod\'ym if and only if it does not fix any copy of $L_1$. 
        We also identify and study the corresponding metric property of $f$, which is a natural extension of the curve-flatness.
        \end{abstract}

	\maketitle

\section{Introduction}

The Lipschitz-free space $\mathcal F(M)$ over a pointed metric space $M$ is a Banach space that is built around $M$ in such a way that $M$ is isometric to a subset $\delta(M)$ of $\mathcal F(M)$, and 0-preserving Lipschitz maps from $\delta (M)$ into any other Banach space $X$ uniquely extend to bounded linear operators from $\mathcal F(M)$ into $X$ (see Section~\ref{ss:LFprelim} for details and references).
It follows that, given two metric spaces $M,N$, for any 0-preserving Lipschitz map $f:M \to N$ there exists its canonical linearization $\widehat{f}:\mathcal F(M) \to \mathcal F(N)$ which is a bounded linear operator, called \emph{Lipschitz operator} hereafter.
The mapping $M \mapsto \mathcal F(M)$ and $f \mapsto \widehat{f}$ is a functor between the category of pointed metric spaces together with 0-preserving Lipschitz maps and the category of Banach spaces together with bounded linear maps. 
In order to gain a better understanding of either of these categories, it is intriguing to study both
\begin{itemize}
    \item[A)] the relation between the metric properties of $M$ and the linear properties of $\mathcal F(M)$, as well as  
    \item[B)] the relation between properties of $f$ and corresponding properties of $\widehat{f}$. 
\end{itemize}
For example, in the direction A) it has been shown in \cite[Theorem C]{AGPP}: \emph{For a complete $M$ {the following assertions are equivalent.}
\begin{itemize}
\item[(i)] $M$ is purely-1-unrectifiable (p-1-u), 
\item[(ii)] $\mathcal F(M)$ has the Radon-Nikodým property (RNP),
\item[(iii)] $\mathcal F(M)$ has the Schur property (SP),
\item[(iv)] $L_1$ is not isomorphic to a subspace of $\mathcal F(M)$. 
\end{itemize}}

In this paper, we aim to proceed in the direction B), i.e. we want to study the part of the free functor acting on morphisms.
Our main result is the following extension of the aforementioned Theorem C of \cite{AGPP}:
\begin{thm}\label{t:flagship}
    {Let $M$ and $N$ be pointed metric spaces and assume that $M$ is complete.} For a 0-preserving Lipschitz map $f:M\to N$, the following are equivalent:
\begin{itemize}
\item[(i)] $f$ is curve-flat (CF), that is, for every $K \subset \R$ compact and every Lipschitz map $\gamma : K \to M$,
	$$\lim_{\substack{y \to x \\ y\in K}}\frac{d(f(\gamma(x)),f(\gamma(y)))}{|x-y|} = 0 \quad \text{for $\lambda$-almost every $x \in K$, }$$
{where $\lambda$ denotes the Lebesgue measure on $\R$.}
\item[(ii)] $\widehat{f}$ is a Radon-Nikodým (RN) operator.
\item[(iii)] $\widehat{f}$ is a Dunford-Pettis (DP) operator.
\item[(iv)] $\widehat{f}$ does not fix any copy of $L_1$.
\end{itemize}
\end{thm}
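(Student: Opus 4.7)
The plan is to prove the formal implications (ii) $\Rightarrow$ (iv) and (iii) $\Rightarrow$ (iv), then the contrapositive (iv) $\Rightarrow$ (i) using Kirchheim's lemma and the RNP direction of \cite[Theorem~C]{AGPP}, and finally the two substantial forward implications (i) $\Rightarrow$ (ii) and (i) $\Rightarrow$ (iii). For the implications into (iv), suppose $\widehat{f}$ restricts to an isomorphism on a subspace $X_0 \subset \mathcal{F}(M)$ isomorphic to $L_1$. Then the isomorphic image of $B_{L_1}$ sits inside $\widehat{f}(B_{\mathcal{F}(M)})$ and inherits non-dentability, so $\widehat{f}$ cannot be RN. Similarly, transporting the Rademacher sequence $(r_n) \subset L_1$ to $X_0$ yields a weakly null sequence in $\mathcal{F}(M)$ whose $\widehat{f}$-image is bounded away from $0$ in norm (by the bounded-below property of $\widehat{f}|_{X_0}$), contradicting DP.

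For (iv) $\Rightarrow$ (i), I argue contrapositively. Suppose $f$ is not curve-flat: there exist a compact $K \subset \R$, a Lipschitz $\gamma : K \to M$, a measurable $E \subset K$ with $\lambda(E) > 0$, and $\eps > 0$ such that $\limsup_{y \to x,\, y \in K} d_N(f(\gamma(x)), f(\gamma(y)))/|x-y| > \eps$ for all $x \in E$. By Kirchheim's lemma the metric derivative $\mathrm{md}(g)$ of $g := f \circ \gamma$ exists $\lambda$-a.e.\ on $K$ and satisfies $\mathrm{md}(g) \geq \eps$ a.e.\ on $E$. A standard Egorov/Lusin extraction combined with a pigeonhole on intervals of sufficiently small length yields a compact $F \subset E$ with $\lambda(F) > 0$ on which $g|_F$ is bi-Lipschitz onto its image. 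Since $F \subset \R$ has positive Lebesgue measure it is not p-1-u, so \cite[Theorem~C]{AGPP} provides a subspace $Z \subset \mathcal{F}(F)$ isomorphic to $L_1$. The bi-Lipschitz condition on $g|_F$ makes $\widehat{g|_F} = \widehat{f} \circ \widehat{\gamma|_F}$ an isomorphism onto its image; hence $\widehat{\gamma|_F}|_Z$ is bounded below, its image $\widehat{\gamma|_F}(Z) \subset \mathcal{F}(M)$ is a copy of $L_1$, and $\widehat{f}$ is an isomorphism on this copy, so $\widehat{f}$ fixes $L_1$.

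The heart of the paper is the pair (i) $\Rightarrow$ (ii) and (i) $\Rightarrow$ (iii). My anticipated strategy lifts the techniques of \cite[Theorem~C]{AGPP} to the operator level. For (i) $\Rightarrow$ (ii), I would use a curve-based characterization of the RN property of $\widehat{f}$ (equivalently, dentability of $\widehat{f}(B_{\mathcal{F}(M)})$), reducing the task to showing that $\widehat{f} \circ \eta$ is $\lambda$-a.e.\ differentiable for every Lipschitz curve $\eta : [0,1] \to \mathcal{F}(M)$. For curves of the form $\eta = \delta \circ \gamma$ with $\gamma : [0,1] \to M$ Lipschitz, the identity $\|\delta(f(\gamma(s))) - \delta(f(\gamma(t)))\|_{\mathcal{F}(N)} = d_N(f(\gamma(s)), f(\gamma(t)))$ translates CF directly into a vanishing-derivative statement for $\widehat{f} \circ \eta$. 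For (i) $\Rightarrow$ (iii), one employs an analogous sequential criterion for DP operators. The main obstacle in both cases is the reduction step: arbitrary Lipschitz curves or weakly null sequences in $\mathcal{F}(M)$ need not factor through $\delta(M)$, so CF --- a priori only controlling $f$ along genuine $M$-curves --- must be transferred to intrinsic Banach-space phenomena in $\mathcal{F}(M)$. I expect this to be handled either by a decomposition/approximation theorem for Lipschitz maps into free spaces, or by a direct slice/dentability analysis of $\widehat{f}(B_{\mathcal{F}(M)})$ using dual Lipschitz test functions.
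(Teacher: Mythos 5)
Your treatment of the peripheral implications is essentially sound: (iii)$\implies$(iv) via a weakly null normalized sequence transported into the fixed copy of $L_1$, and (iv)$\implies$(i) by extracting a bi-Lipschitz piece of $f\circ\gamma$ and noting that $\F(F)$ contains $L_1$ for $F\subset\R$ compact of positive measure, are the same route the paper takes (Theorem~\ref{t:DP}, using Kirchheim's lemma and Godard's theorem). One correction there: your justification of (ii)$\implies$(iv) conflates Radon-Nikod\'ym with \emph{strong} Radon-Nikod\'ym operators. Non-dentability of $\widehat{f}(B_{\F(M)})$ only rules out strong RN (a quotient map of $\ell_1$ onto $c_0$ is RN with non-dentable image of the ball); the correct argument, as in the paper, is to take a bounded uniformly integrable $L_1$-valued martingale with no a.e.\ limit, view it inside the fixed copy $X_0$, and use that $\widehat{f}\restricted_{X_0}$ is an isomorphism to see that $\widehat{f}$ maps it to a non-convergent martingale.

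The genuine gap is that the two implications carrying all the content, (i)$\implies$(iii) and (i)$\implies$(ii), are not proved: you only treat curves of the form $\delta\circ\gamma$, which is the trivial case, and you explicitly defer the ``reduction step'' to an unspecified decomposition or to a dentability analysis. Both fallbacks fail. A slice/dentability analysis of $\widehat{f}(B_{\F(M)})$ would show $\widehat{f}$ is strong RN, which is false for curve-flat maps in general: Example~\ref{Example:RN_not_SRN} gives a curve-flat $f$ on a complete p-1-u space whose linearization is a quotient onto $\F(N)$ with $\F(N)$ failing the RNP (so the image of the ball is non-dentable, yet $\widehat{f}$ is RN). Likewise, a factorization of $f$ through a complete p-1-u space, which would let one quote \cite[Theorem~C]{AGPP} directly, is impossible in general by Example~\ref{cor:Example_CF_not_Factp-1-u}. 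The paper's actual mechanism is different: Bate's perturbation lemma (Lemma~\ref{Lemma_3.4_Bate}) yields that on compact $M$ every curve-flat function lies in $\overline{B_{\lip_0^\tau(M)}}^{w^*}$ (Lemma~\ref{Lemma_Bate}); a Kalton-style gliding-hump construction of $\varepsilon_n$-flat functions against a putative normalized weakly null sequence gives (i)$\implies$(iii) for compact $M$ (Proposition~\ref{prop:CFimpliesDPCompactCase}); a martingale version of the same gliding hump, using the $\sigma(L_1(\F(M)),S)$-nullity of martingale differences (Lemma~\ref{l:WeakConvergenceOfMartingales}, Proposition~\ref{prop:CFimpliesRNCompactCase}), gives (i)$\implies$(ii) for compact $M$; and compact-determination results (Propositions~\ref{prop:DPisCompactlyDetermined} and~\ref{prop:RNisCompactlyDetermined}, based on \cite{APP21} and the mean Kalton property from \cite{AGPP}) transfer both to complete $M$. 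None of these ingredients, nor any substitute for them, appears in your proposal, so the core of the theorem remains unproved.
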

We will prove Theorem~\ref{t:flagship} in Section~\ref{section:DP} (the equivalence of (i), (iii) and (iv)) and in Section~\ref{section:RN} (the equivalence of (i), (ii) and (iv)).

Theorem~\ref{t:flagship} is indeed an extension of \cite[Theorem C]{AGPP} because of the functoriality mentioned above (in particular since $\widehat{\Id_M}=\Id_{\Free(M)}$) and since we have
\begin{itemize}
\item[(i)] $M$ is p-1-u if and only if $\Id_M:M \to M$ is CF,
\item[(ii)] $X$ has the RNP if and only if $\Id_X:X \to X$ is RN,
\item[(iii)] $X$ has the SP if and only if $\Id_X:X \to X$ is DP,
\item[(iv)] $L_1$ is isomorphic to a subspace of $X$ if and only if $\Id_X:X \to X$ fixes a copy of $L_1$.
\end{itemize}

In the subsequent sections we explore some other operator properties which extend properties of Banach spaces through similar equivalences.
Namely, notice that it is true that $X$ has the RNP if and only if $\Id_X:X \to X$ is a strong Radon-Nikod\'ym operator (strong RN). 
In Section~\ref{section:RepresentableAndStrongRN} we study the possibility of adding the item ``$\widehat{f}$ is strong RN'' to the equivalence of Theorem~\ref{t:flagship}.
While we show that this is not possible in general (i.e. there exists curve-flat $f$ such that $\widehat{f}$ is not strong RN; see Example \ref{Example:RN_not_SRN}), we 
show that for compact countably 1 rectifiable $M$ such equivalence holds true (see Proposition~\ref{p:RN_eq_SRN_in_1_rectifiable_spaces}).

Also notice that $X$ has the SP if and only if $\Id_X:X \to X$ factors through a Banach space $Z$ with the Schur property.
We do not know whether the item ``$\widehat{f}$ factors through a Schur space'' could be added to the equivalence of Theorem~\ref{t:flagship}. 
In Section~\ref{section:factorization} we make several remarks in this respect.
In general it is not possible to add an item claiming ``$f:M\to N$ factors through a p-1-u complete metric space'', i.e. there exists curve-flat $f$ which does not factor through any complete p-1-u space (see Example~\ref{cor:Example_CF_not_Factp-1-u}). Notice that if each curve-flat function factored through a complete p-1-u metric space, Theorem~\ref{t:flagship} could be obtained by direct application of \cite[Theorem C]{AGPP}. 
Thus, the above example shows that such a naive approach does not work in general.
In fact, we characterize those compact metric spaces $M$ such that every curve-flat $f:M \to N$ factors through a p-1-u complete metric space (see Theorem~\ref{t:MainOfSection6}). 
It happens precisely when the second dual of the space of uniformly locally flat functions is the space of curve-flat functions. From a metric viewpoint, this happens exactly when collapsing all curve fragments in $M$ leads to a p-1-u space.

Finally, in an appendix, we discuss a short proof due to Sylvester Eriksson-Bique of Theorem~A in~\cite{AGPP} and its relation to our proof of Theorem~\ref{t:flagship}.

\section{Preliminaries}
Throughout the paper, we will use the letters $M,N$ for metric spaces. If $p \in M$, $r\geq 0$ and $S$ a non empty subset of $M$, we will use the notation:
\begin{align*}
	B(p,r) &=  \{x \in M \; | \; d(x,p) \leq r \}\\
	d(p,S) &= \inf\{d(x,p) \; : \; x \in S\}\\
    [S]_r &=\set{x \in M:d(x,S)\leq r}\\
    \text{rad}(S) &=\sup\set{d(0,x):x\in S}.
\end{align*}
The letters $X,Y$ are reserved for real Banach spaces. As usual, $X^*$ denotes the topological dual of $X$, and $\mathcal L(X,Y)$ is the space of all bounded operators from $X$ to $Y$. Additionally, $B_X$ and $S_X$ denote the closed unit ball and the unit sphere of $X$, respectively. If $S \subset X$ then $\conv(S)$ stands for the convex hull of $S$. 
Ordinarily, $L_1 = L_1([0,1])$ represents the Banach space of 
Lebesgue integrable functions from $[0,1]$ to $\R$. 
The Lebesgue measure {on $\R$} is denoted  
by $\lambda$.
Similarly, $\ell_1 = \ell_1(\N)$ is the space of absolutely summable sequences in $\R$.
To indicate that $X$ is isometrically isomorphic to $Y$, we use the notation $X \equiv Y$.

\subsection{Lipschitz-free spaces}\label{ss:LFprelim}
By a \emph{pointed metric space} we mean a metric space $M$ containing a distinguished point, denoted 0 and called a \emph{basepoint}. 

If $M$ and $N$ are pointed metric spaces then $\Lip_0(M,N)$ stands for the set of all Lipschitz functions $f :M \to N$ such that $f(0) = 0$. 
If $N=X$ is a real Banach space, then $\Lip_0(M,X)$ naturally becomes a Banach space when equipped with the Lipschitz norm:
$$ \forall f \in \Lip_0(M,X), \quad \|f\|_L = \sup_{x \neq y \in M} \frac{\|f(x)-f(y)\|_X}{d(x,y)}.$$
In the case $X = \R$, it is customary to shorten the notation to $\Lip_0(M):=\Lip_0(M,\R)$.
Next, for $x\in M$, we let $\delta_M(x) \in \Lip_0(M)^*$ be the evaluation functional defined by $\langle\delta_M(x) , f \rangle  = f(x)$, for all  $f$  in $\Lip_0(M)$. 
It is readily seen that $\delta_M: x \in M \mapsto \delta_M(x) \in \Lip_0(M)^*$ is an isometry. 
The \textit{Lipschitz-free space over $M$} is then defined as the closed linear span of all such evaluation functionals:
$$\F(M) := \overline{ \mbox{span}}^{\| \cdot  \|}\left \{ \delta(x) \, : \, x \in M  \right \} \subset \Lip_0(M)^*.$$
A classical reference on Lipschitz-free spaces is~\cite{Weaver2}.

Let us first recall the universal extension property of Lipschitz-free spaces. Let  $X$ be a Banach space and $f \in \Lip_{0}(M,X)$. Then there exists a unique $\overline{f} \in \mathcal{L}(\F(M),X)$ such that $f = \overline{f} \circ \delta_M$. That is, we have the following commutative diagram:  
$$ \xymatrix{
			M \ar[r]^f \ar[d]_{\delta_M}  & X.  \\
			{\F(M)} \ar[ur]_{\overline{f}} 
		} $$
		Moreover $\| \overline{f} \|_{\mathcal{L}(\F(M),X)} = \| f \|_L$.
In particular,
$\Lip_0(M,X) \equiv \mathcal{L}(\F(M),X)$ and $\F(M)$ is an isometric predual of $\Lip_0(M)$, that is $\Lip_0(M) \equiv \F(M)^*$. 

From the previous factorization, one easily deduces another fundamental feature of  Lipschitz-free spaces that we describe now. If $M,N$ are pointed metric spaces and $f : M \to N$ is a basepoint preserving Lipschitz map, then there exists a unique bounded linear operator $\widehat{f} : \F(M) \to \F(N)$ such that the following diagram commutes:
$$\xymatrix{
	M \ar[r]^f \ar[d]_{\delta_M}  & N \ar[d]^{\delta_N} \\
	\F(M) \ar[r]_{\widehat{f}} & \F(N).
}
$$
Moreover, we have that $\|\widehat{f}\| = \|f\|_L$. 
Operators of the form $\widehat{f}$ are commonly referred to as \textit{Lipschitz operators} or, more pragmatically, as linearizations of Lipschitz maps.
It is noteworthy that the adjoint of $\widehat{f} : \F(M) \to \F(N)$ is the ``pre-composition by $f$'' operator acting between the respective spaces of Lipschitz functions, that is $(\widehat{f})^* = C_f : g\in \Lip_0(N) \mapsto g \circ f \in \Lip_0(M)$. 

We also wish to recall some further important features of Lipschitz-free spaces. 
Firstly, note that, on bounded subsets of $\Lip_0(M)$,
the weak$^*$ topology coincides with the pointwise topology.
Next, if $0 \in K \subset M$, then $\F(K) \equiv \F_M(K)$ where: 
$$\F_M(K) := \overline{\text{span}} \{ \delta_M(x) \; | \; x \in K\} \subset \F(M).$$
Under this identification, the \textit{support of $\gamma \in \F(M)$} is the smallest closed subset $K \subset M$ such that $\gamma \in \F_M(K)$. 
It is denoted by $\text{supp}(\gamma)$. For further details, we refer to \cite{AP20, APPP2019}. If $x \neq y  \in M$, then the elementary molecule $m_{x, y} \in S_{\F(M)}$ is defined by:
$$ m_{x, y} := \frac{\delta(x) - \delta(y)}{d(x,y)}.$$

\subsection{Preliminaries on operators and operator ideals}\label{ss:OperatorIdeals}

In this paper we will deal with the following well known operator ideals:
\begin{itemize}[leftmargin=*, itemsep=5pt]
    \item {\bf Dunford-Pettis operators}. 
   An operator $T:X \to Y$ between Banach spaces is
    \textit{Dunford-Pettis} (or \textit{completely continuous}) whenever it is weak-to-norm sequentially continuous. 
    We will denote by $\DP$ the operator ideal in the sense of Pietsch~\cite{Pietsch} of Dunford-Pettis operators and by $\DP(X,Y)$ the set of all DP operators from $X$ to $Y$.
    
    \item {\bf Radon-Nikod\'ym operators}. 
    An operator $T:X \to Y$ between Banach spaces is called a \emph{Radon-Nikod\'ym operator} if $T$ takes {$L_1$}-bounded $X$-valued martingales into martingales which converge pointwise almost everywhere (see also Section~\ref{ss:martingales} below). 
    For equivalent definitions see~\cite{Edgar} and references therein.
    We will denote by $\RN$ the operator ideal of Radon-Nikod\'ym operators and by $\RN(X,Y)$ the set of all RN operators from $X$ to $Y$.

    \item {\bf Operators factoring through a Banach space with the Schur property}. 
    Partially following the notation of~\cite{Pietsch}, the ideals of these operators will be called $Op(SP)$.
    While it is clear that $Op(SP)\subset \DP$, the opposite inclusion does not hold, {see \cite[Proposition 3.1.8]{Pietsch}}.

	\item {\bf Operators factoring through a Banach space with the RNP}. 
     Partially following the notation of~\cite{Pietsch}, the ideals of these operators will be called $Op(RNP)$.
     Again we clearly have that $Op(RNP)\subset RN$. And again the opposite inclusion does not hold: an example is constructed in~\cite{GhoussoubJohnson1984} of a RN operator which does not factor through any space with the RNP.

    \item {\bf Operators not fixing any copy of $\ell_1$ (respectively, $L_1$)}. An operator $T:X\to Y$ does not fix any copy of $\ell_1$ (respectively, $L_1$) if there is no subspace $Z\subset X$ which is isomorphic to $\ell_1$ (respectively, $L_1$) and such that $T\restricted_Z$ is an isomorphic embedding. 
\end{itemize}

We will also briefly consider the class of strong Radon-Nikod\'ym operators (which does not form an ideal). An operator $T:X \to Y$ between Banach spaces is  a \emph{strong Radon-Nikodym operator} if $\overline{T(B_X)}$ is a RNP set. 
Recall that a closed bounded subset $C$ of a Banach space $X$ is called a \textit{Radon-Nikod\'ym set} (RNP set, in short) if for every probability space $(\Omega, \Sigma, \mathbb{P})$ and any vector measure $m : \Sigma \rightarrow X$ such that the set $\{ m(B) / \mathbb{P} (B) : B \in \Sigma, \, \mathbb{P}(B)>0\}$ is contained in $C$, there exists a Bochner-integrable Radon-Nikod\'ym derivative $f: \Omega \rightarrow X$ such that $m(B)= \int_B f \, dP$ for every $B \in \Sigma$.
It is known that if $T$ is strong Radon-Nikod\'ym, then it is Radon-Nikod\'ym (see \cite{Edgar}) while the converse is not true in general (considering any quotient map from $\ell_1$ onto $c_0$). Notice that a Banach space $X$ has the RNP if and only if the identity operator on $X$ is strong Radon-Nikod\'ym, 

Finally, for a finite measure space $(\Omega , \Sigma , \mu)$, a bounded operator $T : L_1(\mu) \to Y$ is said to be \emph{representable} if there exists $g \in L^{\infty}(\mu , Y)$ such that:
    $$\forall h \in L_1(\mu) , \quad T(h)= \int_\Omega hg \mathrm d \mu. $$

\subsection{Preliminaries on curve-flat Lipschitz functions}\label{ss:CF}

A \emph{Lipschitz curve} in a metric space $M$ is a Lipschitz map $\gamma\colon K\rightarrow M$ where $K$ is a compact subset of the real line. We warn the reader that our use of the term ``Lipschitz curve'' differs from a more traditional meaning where the domain of $\gamma$ would be required to be an interval. The next concept has been introduced in~\cite[Definition 5.21]{AGPP} for functions with values in $\Real$.
We say that a Lipschitz map $f\colon M\rightarrow N$ is \emph{curve-flat} if {for every compact subset $K$ of $\R$ and} every Lipschitz curve $\gamma\colon K\rightarrow M$, 
$$\lim_{\substack{y \to x \\ y\in K}}\frac{d(f(\gamma(x)),f(\gamma(y)))}{|x-y|} = 0$$
for $\lambda$-almost every $x \in K$, where $\lambda$ denotes the Lebesgue measure on $\RR$.
We will use the following notation 
$$\CF_0(M,N) := \{f \in \Lip_0(M,N) \; : \; f \text{ is curve-flat} \}.$$
If $N=\R$ we abbreviate the notation by letting $\CF_0(M):=\CF_0(M, \R)$. 
Let us mention that a routine argument shows that $\CF_0 (M, Y)$ is a closed subspace of $\Lip_0 (M,Y)$ when $Y$ is a Banach space.
We will see in Proposition~\ref{p:CFwsClosed} below that $\CF_0(M)$ is even weak$^*$-closed.

The curve-flat functions are close relatives of (but should not be confused with) \textit{uniformly locally flat} functions, that is functions $f$ such that
$$\lim\limits_{\substack{d(x,y) \to 0 \\ x \neq y}} \frac{d(f(x),f(y))}{d(x,y)} = 0.$$
We will use the following notation
\[
\lip_0(M,N):=\set{f \in \Lip_0(M,N): f \mbox{ is uniformly locally flat}}.
\]
If $N=\Real$ we let $\lip_0(M):=\lip_0(M,\Real)$.

We always have $\lip_0(M,N)\subset \CF_0(M,N)$ but equality might not hold. Indeed, consider   $M=\set{0}\cup\set{\frac1n:n \in \Natural}$ together with the metric $d$ inherited from $\Real$, then $d(\cdot,0)\in \CF_0(M)\setminus \lip_0(M)$. 

In this paper, we will handle curve-flat functions through equivalent descriptions based on classical results from metric geometry, namely Kirchheim's theorems \cite{Kirchheim}. For the convenience of the reader, let us state the relevant results to be used in the sequel. From now on, $\H^1$ stands for the Hausdorff 1-measure.

\begin{KirchThm}[Area Formula, {\cite[Corollary~8]{Kirchheim}}]
	Let $A$ be a Borel subset of $\RR$ and  $\gamma : A \to M$ be a Lipschitz map. Let the metric differential of $\gamma$ at $x \in A$ be the nonnegative number:
	$$MD(\gamma,x) := \lim_{\substack{y \to x \\ y \in A}} \dfrac{d(\gamma(x),\gamma(y))}{|x-y|}, \text{ whenever this limit exists.}$$
	 For each $y \in \gamma(A)$, $|\gamma^{-1}(y)| \in [1,\infty]$ denotes the cardinality of the preimage $\gamma^{-1}(y)$. 
	Then
	$$\int_A MD(\gamma,x)\, d\lambda(x) = \int_{\gamma(A)} |\gamma^{-1}(y)|\, d\H^1(y).$$
	Consequently, $MD(\gamma,x) = 0$ for $\lambda$-almost every $x \in A$ if and only if $\H^1(\gamma(A)) = 0$.
\end{KirchThm}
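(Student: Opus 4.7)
The plan is to reduce the theorem to two ingredients: (i) the $\lambda$-a.e.\ existence of the metric differential $MD(\gamma,\cdot)$ on $A$, and (ii) a Banach-indicatrix-style identity equating the integrated metric speed with $\mathcal{H}^1$ counted with multiplicity.

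For (i), I would embed $\gamma(A)$ isometrically into a separable Banach space via a Kuratowski construction. Fix a countable dense set $\{p_n\} \subset \gamma(A)$ and define $\iota(z) := (d(z,p_n) - d(0,p_n))_{n \geq 1} \in \ell_\infty$. Then $\tilde\gamma := \iota \circ \gamma : A \to \ell_\infty$ is Lipschitz and its scalar components $\phi_n(t) := d(\gamma(t),p_n) - d(0,p_n)$ are real-variable Lipschitz functions, hence differentiable $\lambda$-a.e.\ by Rademacher's theorem. Intersecting these full-measure sets produces a Borel $A_0 \subset A$ of full $\lambda$-measure on which every $\phi_n'(t)$ exists. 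Using $\|\iota(z) - \iota(z')\|_\infty = d(z,z')$ together with the common Lipschitz bound and a standard equicontinuity argument, I would show that $d(\gamma(t),\gamma(s))/|t-s| = \sup_n |\phi_n(t)-\phi_n(s)|/|t-s|$ admits a limit as $s \to t$ in $A$ at every $t \in A_0$, yielding $MD(\gamma,t) = \sup_n |\phi_n'(t)|$.

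For (ii), I would prove the formula by a partition argument. For each $\varepsilon > 0$, decompose $A_0$ into countably many Borel pieces $\{A_k\}$ on each of which (a) $MD(\gamma,\cdot)$ varies by at most $\varepsilon$, and (b) the restriction $\gamma|_{A_k}$ is bi-Lipschitz onto its image with distortion close to $\sup_{A_k} MD(\gamma,\cdot)$. Such a decomposition is possible because of the pointwise local linearization implicit in the metric differential on $A_0$. On each piece, $\mathcal{H}^1(\gamma(A_k))$ is close to $\sup_{A_k} MD(\gamma,\cdot) \cdot \lambda(A_k)$, which in turn is close to $\int_{A_k} MD(\gamma,\cdot)\,d\lambda$, and summing while tracking overlaps gives $\sum_k \mathcal{H}^1(\gamma(A_k)) = \int_{\gamma(A)} |\gamma^{-1}(y)|\,d\mathcal{H}^1(y)$ by Tonelli applied to the counting measure $y \mapsto |\gamma^{-1}(y)|$.

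The main obstacle I expect is step (i), specifically the uniform interchange of ``$\sup_n$'' and ``$\lim_{s\to t}$'' needed to upgrade coordinatewise a.e.\ differentiability of the $\phi_n$ into the existence of a genuine two-variable limit of $d(\gamma(t),\gamma(s))/|t-s|$. This is the heart of Kirchheim's contribution; once it is handled through a standard $\varepsilon/3$-argument exploiting the common Lipschitz constant and a fine Lebesgue-point selection of $A_0$, the area formula itself reduces to classical real-variable geometric measure theory. The final assertion that $MD(\gamma,\cdot) = 0$ $\lambda$-a.e.\ if and only if $\mathcal{H}^1(\gamma(A)) = 0$ is then immediate from the formula together with $|\gamma^{-1}(y)| \geq 1$ for every $y \in \gamma(A)$.
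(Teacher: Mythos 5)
First, note that the paper does not prove this statement at all: it is quoted verbatim from the literature as \cite[Corollary~8]{Kirchheim}, so there is no internal proof to compare against. Your sketch does follow the standard (indeed Kirchheim's own) route: metric differentiability via the Kuratowski-type coordinates $\phi_n(t)=d(\gamma(t),p_n)-d(0,p_n)$, then a decomposition of the domain into pieces on which $\gamma$ is nearly an isometry up to the factor $MD(\gamma,\cdot)$, and finally a multiplicity count. So the architecture is right; the issues are in the two places where you declare the work ``standard''.

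The first soft spot is the interchange of $\sup_n$ and $\lim_{s\to t}$. An $\varepsilon/3$ argument ``exploiting the common Lipschitz constant'' does not do it: equi-Lipschitzness only bounds the difference quotients, and the sup of the limits can a priori be strictly smaller than the limsup of the sups. The actual argument needs the integrable envelope $m(t):=\sup_n|\phi_n'(t)|$: after extending each $\phi_n$ to $\R$ with the same Lipschitz constant one has $|\phi_n(s)-\phi_n(t)|\leq\int_{[t\wedge s,\,t\vee s]}m\,d\lambda$ uniformly in $n$, and at Lebesgue points of $m$ (together with density points of $A$) this gives the upper bound $\limsup_{s\to t}d(\gamma(s),\gamma(t))/|s-t|\leq m(t)$, matching the easy lower bound. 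That majorant-plus-Lebesgue-differentiation step is the heart of the matter and should be stated, not waved at. The second soft spot is part (b) of your decomposition: on the set $\{MD(\gamma,\cdot)=0\}$ the restriction of $\gamma$ cannot be bi-Lipschitz ``with distortion close to $\sup MD$'', so that set must be treated separately by a covering argument showing $\H^1(\gamma(\{MD<\varepsilon\}))\lesssim \varepsilon\,\lambda(A)$; on $\{MD>0\}$ the bi-Lipschitz decomposition is not ``implicit'' in pointwise differentiability but requires an Egorov-type uniformization of the limit on pieces of small diameter --- this is exactly the content of what the paper quotes separately as Kirchheim's lemma. Finally, ``Tonelli applied to the counting measure'' glosses over the facts that $\gamma(A_k)$ is only analytic (hence $\H^1$-measurable, which deserves a word), that $\sum_k \indicator{\gamma(A_k)}$ only increases to $|\gamma^{-1}(\cdot)|$ after refining the partition into injectivity pieces, and that images of the exceptional $\lambda$-null sets are $\H^1$-null. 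None of these is fatal --- they are the standard ingredients of Kirchheim's proof --- but as written your plan understates precisely the steps that make the theorem nontrivial.
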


\begin{KirchLemma}
	For every metric space $(M, d)$ and every Lipschitz map $\gamma : A \to M$ with
	$A \subset \R$ Borel and $\H^1(\gamma(A)) > 0$, there exists a compact subset
	$S \subset A$ with $\lambda(S) > 0$ such that $\gamma \restricted_S$ is bi-Lipschitz.
\end{KirchLemma}

The previous lemma does not appear explicitly in but can be easily derived from \cite{Kirchheim}; for further details, see \cite[Lemma 1.11]{AGPP}. We can now present the desired equivalent formulations of the curve-flat property.

\begin{proposition} \label{prop:curveflat}
	Let $f:M \to N$ be a Lipschitz map between metric spaces. The following assertions are equivalent :
	\begin{enumerate}[$(i)$]
		\item $f$ is curve-flat;
		\item for every Lipschitz curve $\gamma\colon K\rightarrow M$, $\H^1(f\circ \gamma(K)) = 0$.
		\item for every bi-Lipschitz curve $\gamma\colon K\rightarrow M$, $\H^1(f\circ \gamma(K)) = 0$.
		\item for every Lipschitz curve $\gamma\colon K\rightarrow M$ and every Lipschitz map {$g : N \to \R $}, $\H^1(g \circ f\circ \gamma(K)) = 0$.
	\end{enumerate}
\end{proposition}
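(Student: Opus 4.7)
My plan is to prove the cycle (i) $\Leftrightarrow$ (ii), then (ii) $\Rightarrow$ (iii) $\Rightarrow$ (ii), and finally (ii) $\Rightarrow$ (iv) $\Rightarrow$ (ii). The whole argument rests on combining Kirchheim's area formula with Kirchheim's lemma applied to the composition $f\circ\gamma$, which is itself a Lipschitz curve into $N$.

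For (i) $\Leftrightarrow$ (ii), given a Lipschitz curve $\gamma:K\to M$, apply the area formula to the Lipschitz map $f\circ\gamma:K\to N$: the metric differential $MD(f\circ\gamma,x)$ exists for $\lambda$-a.e.\ $x\in K$, and
\[
\int_K MD(f\circ\gamma,x)\,d\lambda(x)=\int_{f\circ\gamma(K)}|(f\circ\gamma)^{-1}(y)|\,d\H^1(y).
\]
Since the integrand on the right is at least $1$ on $f\circ\gamma(K)$, the left side vanishes iff $\H^1(f\circ\gamma(K))=0$. Condition (i) applied to $\gamma$ is exactly saying $MD(f\circ\gamma,x)=0$ for a.e.\ $x\in K$, so (i) and (ii) are equivalent.

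For (ii) $\Rightarrow$ (iii) there is nothing to do. For the converse (iii) $\Rightarrow$ (ii), assume by contradiction that some Lipschitz curve $\gamma:K\to M$ satisfies $\H^1(f\circ\gamma(K))>0$. Apply Kirchheim's lemma to $f\circ\gamma$ to produce a compact $S\subset K$ with $\lambda(S)>0$ on which $f\circ\gamma|_S$ is bi-Lipschitz. The key observation is that the restriction $\gamma|_S$ is then automatically bi-Lipschitz: the upper bound is inherited from $\gamma$, and the lower bound follows from the chain of inequalities
\[
|x-y|\leq C\,d(f(\gamma(x)),f(\gamma(y)))\leq C\,\|f\|_L\,d(\gamma(x),\gamma(y)),
\]
where $C$ is the bi-Lipschitz constant of $f\circ\gamma|_S$. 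Applying (iii) to $\gamma|_S$ gives $\H^1(f\circ\gamma(S))=0$, contradicting the fact that $f\circ\gamma|_S$ is bi-Lipschitz on a set of positive Lebesgue measure.

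The implication (ii) $\Rightarrow$ (iv) is immediate: for any Lipschitz $g:N\to\R$, one has $\H^1(g(A))\leq \|g\|_L\,\H^1(A)$, so $\H^1(g\circ f\circ\gamma(K))=0$. For the reverse implication (iv) $\Rightarrow$ (ii), which is the step where a small construction is needed, assume $\H^1(f\circ\gamma(K))>0$ for some Lipschitz curve $\gamma$. Use Kirchheim's lemma again on $f\circ\gamma$ to find compact $S\subset K$ with $\lambda(S)>0$ on which $f\circ\gamma|_S$ is bi-Lipschitz. Then $\psi:=(f\circ\gamma|_S)^{-1}:f\circ\gamma(S)\to S\subset\R$ is a real-valued Lipschitz function defined on a subset of $N$, which by the McShane extension theorem extends to a Lipschitz map $g:N\to\R$. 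By construction $g\circ f\circ\gamma|_S=\mathrm{Id}_S$, so $g\circ f\circ\gamma(K)\supset S$ has $\H^1$-measure at least $\lambda(S)>0$, contradicting (iv).

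The main technical points are the two applications of Kirchheim's lemma to $f\circ\gamma$; once one sees that the bi-Lipschitz restriction obtained lives on the domain side $K\subset\R$ (so one can transfer bi-Lipschitzness back to $\gamma$ in (iii) $\Rightarrow$ (ii), and invert and McShane-extend in (iv) $\Rightarrow$ (ii)), everything else is bookkeeping with the area formula and the standard Lipschitz estimate on $\H^1$.
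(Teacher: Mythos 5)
Your proof is correct and follows essentially the same route as the paper: the area formula applied to $f\circ\gamma$ for (i)$\Leftrightarrow$(ii), and Kirchheim's lemma applied to $f\circ\gamma$ — transferring bi-Lipschitzness back to $\gamma\restricted_S$ for (iii)$\Rightarrow$(ii), and inverting plus McShane-extending for (iv)$\Rightarrow$(ii). Your explicit chain of inequalities for the lower bi-Lipschitz bound of $\gamma\restricted_S$ is just a spelled-out version of the paper's observation that $(\gamma\restricted_S)^{-1}=(f\circ\gamma\restricted_S)^{-1}\circ f\restricted_{\gamma(S)}$ is Lipschitz.
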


\begin{proof}
	The equivalence $(i) \iff (ii)$ follows from the Area formula. The implications $(ii) \implies (iii)$ and $(ii) \implies (iv)$ are obvious. Let us prove the contrapositive of $(iii) \implies (ii)$. Assume that there exists a Lipschitz curve $\gamma : K \to M$ such that $\H^1(f\circ \gamma(K))>0$.  By Kirchheim's lemma, there exists a compact subset $A$ of $K$ such that $\H^1(A) = \lambda(A)>0$ and $f\circ \gamma\restricted_A$ is bi-Lipschitz. Obviously $(\gamma\restricted_A)^{-1}=(f \circ \gamma\restricted_{A})^{-1}\circ f\restricted_{\gamma(A)}$ is Lipschitz.
 So $\gamma:A \to M$ is the desired bi-Lipschitz curve. The proof of $(iv) \implies (ii)$ is similar: Let $\gamma : K \to M$ a Lipschitz curve such that $\H^1(f\circ \gamma(K))>0$. Apply Kirchheim's lemma to obtain $A \subset K$ such that $\H^1(A) >0$ and $f\circ \gamma\restricted_A$ is bi-Lipschitz. Then define $g:= (f \circ \gamma \restricted_A)^{-1} : f\circ\gamma(A) \to A$ and extend $g$ to $N$ using McShane-Whitney's extension theorem ({see e.g. \cite[Theorem 1.33]{Weaver2}}). 
 We denote this extension by $\widetilde{g}$. Now observe that $\widetilde{g}$ is Lipschitz on $N$ and  such that $\H^1(\widetilde{g}\circ f \circ \gamma(A)) = \H^1(A)>0$, which is precisely the negation of $(iv)$.
\end{proof}  
  \begin{prop}\label{p:CFwsClosed}
    For any metric space $M$, the space $\CF_0(M)$ is weak$^*$-closed in $\Lip_0(M)$.
   \end{prop}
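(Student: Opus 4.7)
The plan is to deduce weak$^*$-closedness from the Banach--Dieudonné theorem. Since $\CF_0(M)$ is a linear subspace of $\Lip_0(M) = \F(M)^*$, it is enough to show that $\CF_0(M) \cap B_{\Lip_0(M)}$ is weak$^*$-closed; on this ball the weak$^*$ topology coincides with pointwise convergence, as recalled in the preliminaries. So I would take a net $(f_\alpha)$ in $\CF_0(M) \cap B_{\Lip_0(M)}$ converging pointwise to some $f$, and verify the curve-flat condition for $f$, namely that $(f \circ \gamma)'(x) = 0$ for $\lambda$-a.e.\ $x \in K$ for every Lipschitz curve $\gamma : K \to M$.

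Fix such a $\gamma$ and set $h_\alpha := f_\alpha \circ \gamma$ and $h := f \circ \gamma$. The net $(h_\alpha)$ is equi-Lipschitz on the compact set $K \subset \R$ and converges pointwise to $h$, so a standard equicontinuity argument upgrades this to uniform convergence on $K$. Extending each $h_\alpha$ by McShane--Whitney to a Lipschitz function $\tilde h_\alpha$ on an interval $[a,b] \supset K$ (preserving the Lipschitz constant), Arzelà--Ascoli provides a subnet which converges uniformly on $[a,b]$ to some Lipschitz extension $\tilde h$ of $h$. Along this subnet the derivatives $(\tilde h_\alpha')$ are bounded in $L^2([a,b])$, so after extracting a further subnet they converge weakly in $L^2$ to some $g$; integration by parts against test functions in $C^\infty_c(a,b)$, together with the uniform convergence of $\tilde h_\alpha$ to $\tilde h$, forces $g = \tilde h'$.

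Because each $f_\alpha$ is curve-flat, Proposition~\ref{prop:curveflat} and the area formula yield $(f_\alpha \circ \gamma)'(x) = 0$ for $\lambda$-a.e.\ $x \in K$, and at Lebesgue density points of $K$ this one-sided derivative agrees with the full derivative of the extension, so $\tilde h_\alpha' \mathbf{1}_K = 0$ a.e.\ on $[a,b]$. Testing this identity against an arbitrary $\phi \in L^2([a,b])$ supported in $K$ and passing to the weak limit gives $\tilde h' = 0$ a.e.\ on $K$. At $\lambda$-a.e.\ point $x \in K$ the function $\tilde h$ is differentiable and $x$ is a density point of $K$, so $\tilde h'(x)$ equals the one-sided limit along $K$ that defines $h'(x)$; hence $h'(x) = 0$ for $\lambda$-a.e.\ $x \in K$. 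Since $\gamma$ was arbitrary, $f \in \CF_0(M)$.

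The main obstacle is the classical gap between uniform convergence of Lipschitz functions and any pointwise control of their derivatives. The way I would bridge it is to replace the (unavailable) pointwise convergence of $\tilde h_\alpha'$ with a weak $L^2$-limit, which integration by parts forces to coincide with the derivative of the uniform limit $\tilde h$; the a.e.\ vanishing of $\tilde h_\alpha'$ on $K$ then passes to the limit by testing against $L^2$-functions supported in $K$.
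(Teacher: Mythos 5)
Your proposal is correct, but its main step runs along a genuinely different route from the paper's. Both arguments start with the same reduction (Krein--Smulian/Banach--Dieudonn\'e, so that it suffices to handle the unit ball, where weak$^*$ convergence is pointwise convergence), and both ultimately rest on the same elementary observation: for a curve-flat $g$, the McShane--Whitney extension of $g\circ\gamma$ has derivative $0$ at $\lambda$-a.e.\ point of $K$, because a.e.\ point of $K$ is a density (hence accumulation) point where the full derivative, if it exists, coincides with the limit of difference quotients along $K$. The paper, however, proceeds by contradiction: assuming the weak$^*$ limit $f$ is not curve-flat, it uses Kirchheim's lemma to pass to a curve on which $f\circ\gamma$ is bi-Lipschitz with co-Lipschitz constant $C$, fixes one density point $a$ of $K$, shows via the extension estimate that every $g\in B_{\CF_0(M)}$ has difference quotient $|g(\gamma(t))-g(\gamma(a))|/|t-a|$ tending to $0$ as $t\to a^+$ in $K$, and concludes that the single molecule-type neighbourhood $\{\varphi : |\langle\delta(\gamma(t))-\delta(\gamma(a)),\varphi\rangle|/|t-a|>C/2\}$ is a weak$^*$-open set containing $f$ and missing $B_{\CF_0(M)}$. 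You instead prove directly that a pointwise limit of a net in $B_{\CF_0(M)}$ is curve-flat, transporting the a.e.\ vanishing of $\tilde h_\alpha'$ on $K$ to the limit by Arzel\`a--Ascoli together with weak $L^2$ compactness (both legitimate for nets, since the relevant sets are compact) and integration by parts against test functions. Your route avoids Kirchheim's lemma and any separation/contradiction step, at the price of subnet extractions and a distributional identification of the limit derivative; the paper's argument is more quantitative and only ever looks at two points of the curve, which is why it is shorter. Two harmless imprecisions in your write-up: the limit along $K$ defining the metric differential is not really a ``one-sided derivative'', and the area formula is not needed to get $(f_\alpha\circ\gamma)'=0$ a.e.\ on $K$ --- for real-valued $f_\alpha$ this is just the definition of curve-flatness.
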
   
   \begin{proof}
      By Krein-Smulian theorem, it is enough to show that $B_{\CF_0(M)}$ is w$^*$-closed.
      So let $f\in \overline{B_{\CF_0(M)}}^{w^*}$. 
      Clearly, $\norm{f}_L\leq 1$.
      Let us assume that $f\notin \CF_0(M)$.
      Then, by Proposition~\ref{prop:curveflat} there exists a Lipschitz curve $\gamma:K \to M$ such that $\H^1(f\circ\gamma(K))>0$. 
      Thus by Kirchheim's lemma we may as well assume that $\lambda(K)>0$ and $f\circ\gamma$ is bi-Lipschitz. 
      Let $C>0$ be the co-Lipschitz constant of $f\circ\gamma$, i.e., $C\abs{x-y}\leq | f\circ\gamma(x) - f\circ \gamma(y) |$ for every $x,y \in K$.
      We apply Lebesgue's density theorem to obtain a point $a \in K\setminus \set{\max K}$ where the density of $K$ is 1. 
      Now, given any $g \in B_{\CF_0(M)}$ and  $t\in K \cap (a,\max K]$, note that if $h$ is a McShane-Whitney's extension of $g \circ \gamma$ to $[a,\max K]$ then :
      \[
      \frac{\abs{g\circ \gamma(t)-g\circ \gamma(a)}}{\abs{t-a}} \leq \frac{1}{|t-a|}  \int_a^t | h' (s)| \, d\lambda(s) \leq 
      \norm{\gamma}_L\frac{\lambda([a,t]\setminus K)}{\abs{t-a}}.
      \]
      Since the density of $K$ at $a$ is 1, this last quantity goes to 0 as $t\to a+$. 
      In particular, 
      \[
      \set{\varphi \in \Lip_0(M):\frac{\abs{\duality{\delta(\gamma(t))-\delta(\gamma(a)),\varphi}}}{\abs{t-a}}>\frac{C}2}
      \]
      is a w$^*$-open neighborhood of $f$ which does not intersect with $B_{\CF_0(M)}$ if $t$ is chosen close enough to $a$.
      Thus $f\notin \overline{B_{\CF_0(M)}}^{w^*}$. 
      This contradiction shows that $B_{\CF_0(M)}$ is w$^*$-closed.
   \end{proof}

We conclude this section with the next central definition. 

\begin{definition}
	A metric space $M$ is said to be \textit{purely 1-unrectifiable} (p-1-u for short) if $\H^1(\gamma(A)) = 0$ for every
	$A \subset \R$ and every Lipschitz map $\gamma : A \to M$. 
\end{definition}
It is well known that, thanks to Kirchheim's lemma, $M$ is purely 1-unrectifiable if and only if $M$ contains no \emph{curve fragments}, that is, no image of a bi-Lipschitz embedding $ \gamma : K \to M$, where $K \subset \R$ is compact with $\lambda(K) > 0$. 
This, together with Proposition~\ref{prop:curveflat}, shows that $M$ is p-1-u if and only if $\Id_M \in \CF_0(M,M)$. We even get the following curious corollary.

\begin{cor}\label{cor:p-1-u_iff_CF_Sep_points_unif}
		If $M$ is any metric space such that $\CF_0(M)$ is w$^*$-dense in $\Lip_0(M)$, then $M$ is purely 1-unrectifiable.
\end{cor}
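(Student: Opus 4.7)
The plan is to combine the weak$^*$ closedness of $\CF_0(M)$ (Proposition~\ref{p:CFwsClosed}) with the equivalent description of curve-flatness via real-valued Lipschitz postcompositions (Proposition~\ref{prop:curveflat}(iv)) to conclude that the identity on $M$ is curve-flat, which is the characterization of $M$ being p-1-u recalled just before the statement.

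First I would observe that, by Proposition~\ref{p:CFwsClosed}, $\CF_0(M)$ is already weak$^*$-closed; hence, under the w$^*$-density hypothesis, we actually have the equality $\CF_0(M) = \Lip_0(M)$. Since adding a constant to a real-valued Lipschitz function does not affect curve-flatness (the definition only involves differences of values), it follows that \emph{every} Lipschitz function $g \colon M \to \R$ (not necessarily basepoint-preserving) is curve-flat, i.e., for every Lipschitz curve $\gamma \colon K \to M$, $\H^1(g \circ \gamma(K)) = 0$.

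Now I would apply Proposition~\ref{prop:curveflat} to $f = \Id_M$. By the previous paragraph, condition (iv) of that proposition is satisfied for $\Id_M$: for every Lipschitz curve $\gamma \colon K \to M$ and every Lipschitz map $g \colon M \to \R$, we have $\H^1(g \circ \Id_M \circ \gamma(K)) = \H^1(g \circ \gamma(K)) = 0$. By the equivalence $(iv) \Leftrightarrow (i)$ in Proposition~\ref{prop:curveflat}, this means that $\Id_M \in \CF_0(M,M)$, which is precisely the characterization of $M$ being purely 1-unrectifiable recorded immediately before the statement of the corollary.

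No serious obstacle is expected here; the argument essentially consists in assembling the w$^*$-closedness of $\CF_0(M)$ with the fact that real-valued Lipschitz test functions suffice to detect rectifiability (Proposition~\ref{prop:curveflat}(iv)). The only minor point to handle is that curve-flat functions in $\Lip_0(M)$ are basepoint-preserving while (iv) quantifies over arbitrary Lipschitz maps $g \colon M \to \R$, but this is immediately dealt with by the constant-shift observation above.
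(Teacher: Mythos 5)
Your proposal is correct and follows essentially the same route as the paper: weak$^*$-closedness of $\CF_0(M)$ (Proposition~\ref{p:CFwsClosed}) upgrades the density hypothesis to $\CF_0(M)=\Lip_0(M)$, and then Proposition~\ref{prop:curveflat}(iv) applied to $\Id_M$ yields that $M$ is p-1-u. Your extra remark on handling non-basepoint-preserving test functions by a constant shift is a harmless refinement of the same argument.
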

\begin{proof}
    We get by Proposition~\ref{p:CFwsClosed} and by the hypothesis that $\CF_0(M)=\Lip_0(M)$. Thus, applying  Proposition~\ref{prop:curveflat} (iv), we get $\Id_M \in CF_0(M,M)$, and so $M$ is p-1-u.
\end{proof}

{Studying further the isomorphic properties of $\Free(M)$ for $M$ complete p-1-u space is a promising research direction. 
Interesting results in this respect have been recently obtained by Basset in~\cite{Basset}. We also want to mention the following question, reminded to us by an anonymous referee (and stated, in a slightly different form, already in Godefroy's survey on Lipschitz free spaces {\cite[Problem 6.5]{GodefroySurvey}}):  It is not known, whether $\F(M)$ has the (metric) approximation property when $M$ is compact p-1-u. }

	\subsection{Preliminaries on martingales}\label{ss:martingales}
 
	Let $(\Omega,\A,\PP)$ be a probability space and let $X$ be a Banach space. We denote by $L_1(\Omega,\A,\PP; X)$ the space of (equivalence classes of) Bochner measurable functions $f : \Omega \to X$ such that $\int_{\Omega} \|f\|_X \, d\PP < \infty$ equipped as usual with the norm
	$$ \|f\|_{L_1(\Omega,\A,\PP; X)} = \E[\|f\|_X] = \int_{\Omega} \|f\|_X \, d\PP.$$
	In the sequel, we will suppress notation and write simply $L_1(X)$, or $L_1(\A;X)$ when the $\sigma$-algebra $\A$ needs to be emphasized.
	
	We recall that a sequence $(M_n)_{n=0}^\infty$ in $L_1(\A; X)$ is called a \emph{martingale} if there exists an increasing sequence $(\A_n)_{n=0}^\infty$ of $\sigma$-subalgebras of $\A$ (called a \emph{filtration}) such that for each $n \geq 0$, $M_n$ is $\A_n$-measurable and satisfies
	$$ M_n = \E^{\A_n}(M_{n+1}), $$
	where $\E^{\A_n}$ denotes the $X$-valued conditional expectation relative to $\A_n$ (see e.g. \cite[Section~1.2]{Pisier}). {Throughout this work, a bounded martingale will always refer to an $L_1(X)$-bounded martingale, meaning that $(\|M_n\|_X)_{n=0}^\infty$ is bounded in {$L_1(\mathcal{A}, \RR)$.}}
	We say moreover that $(M_n)_{n=0}^\infty$ is \emph{uniformly integrable} if the sequence of non-negative random variables $(\|M_n\|_X)_{n=0}^\infty$ is uniformly integrable. More precisely, this means that $(M_n)_{n=0}^\infty$ {is bounded} and that, for any $\eps>0$, there is a $\delta>0$ such that $$ \forall A \in \A, \quad \PP(A) < \delta \implies \sup_{n \geq 0} \int_A \|M_n\|_X \, d\PP < \eps.$$
	
	We will need the following characterization of Radon-Nikod\'ym operators.
	
	\begin{lemma}\label{Lemma:RN via unif integrable martingales}
		Let $X$ and $Y$ be Banach spaces. A bounded operator $T :X \to Y$ is Radon-Nikod\'ym if and only if for every uniformly integrable $X$-valued martingale $(M_n)_n$, $(T(M_n))_n$ converges in $L_1(Y)$.
	\end{lemma}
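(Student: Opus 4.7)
The plan is to prove both directions, with the converse requiring a stopping-time truncation to reduce to the uniformly integrable case.

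For the forward direction, I would assume $T$ is Radon-Nikod\'ym and take a uniformly integrable $X$-valued martingale $(M_n)$. Uniform integrability implies $L_1$-boundedness, so the definition of $\RN$ gives a.e.\ convergence of $(T(M_n))$ in $Y$. Since $\|T(M_n)\|_Y \le \|T\|\cdot\|M_n\|_X$, the real-valued family $(\|T(M_n)\|_Y)$ inherits uniform integrability from $(\|M_n\|_X)$, and Vitali's theorem upgrades the a.e.\ convergence to convergence in $L_1(Y)$.

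For the converse, given an $L_1$-bounded $X$-valued martingale $(M_n)$ adapted to $(\A_n)$, I would introduce the stopping times $\tau_k := \inf\{n\ge 0 : \|M_n\|_X > k\}$ and look at the stopped martingales $N^{(k)}_n := M_{n\wedge \tau_k}$. The pointwise bound $\|N^{(k)}_n\|_X \le k + \|M_{\tau_k}\|_X \indicator{\tau_k<\infty}$ holds for every $n$, and integrability of $\|M_{\tau_k}\|_X \indicator{\tau_k<\infty}$ follows from the optional stopping theorem applied to the scalar submartingale $(\|M_n\|_X)$ together with Fatou's lemma. Hence each $(N^{(k)}_n)_n$ is dominated by an integrable function, so uniformly integrable, and the hypothesis produces $G_k\in L_1(Y)$ with $T(N^{(k)}_n) \to G_k$ in $L_1(Y)$. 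To promote this to a.e.\ convergence, I would note that $L_1$-convergence forces the identification $T(N^{(k)}_n) = \E^{\A_n}[G_k]$ (take the $L_1$-limit as $m\to\infty$ in $\E^{\A_n}[T(N^{(k)}_m)] = T(N^{(k)}_n)$), and invoke the standard fact that closable Banach-valued martingales converge a.e.\ (via simple-function approximation of $G_k$ combined with Doob's maximal inequality). On $\{\tau_k=\infty\}$ we have $N^{(k)}_n = M_n$, so $(T(M_n))_n$ converges a.e.\ on this set; a final application of Doob's maximal inequality to the scalar submartingale $(\|M_n\|_X)$ gives $\sup_n\|M_n\|_X<\infty$ a.e., so $\bigcup_k\{\tau_k=\infty\}$ has full measure.

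The hard part will be verifying uniform integrability of the stopped martingales, since a priori $\|M_{\tau_k}\|_X$ can overshoot $k$ by an arbitrary amount at the stopping instant; controlling this overshoot through optional stopping applied to the scalar submartingale $(\|M_n\|_X)$ is the decisive technical point.
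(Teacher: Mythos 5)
Your proof is correct and follows essentially the same route the paper indicates: the direction ``RN $\Rightarrow$ $L_1(Y)$-convergence for uniformly integrable martingales'' via the Vitali convergence theorem, and the converse via the standard stopping-time truncation reducing an $L_1$-bounded martingale to uniformly integrable stopped martingales (the argument of \cite[Proposition~1.33]{Pisier}), with the closability step $T(N^{(k)}_n)=\E^{\A_n}[G_k]$ upgrading $L_1$-convergence to a.e.\ convergence. The details you supply (optional stopping plus Fatou for the overshoot, Doob's maximal inequality for a.e.\ finiteness of $\sup_n\|M_n\|_X$) are exactly the ingredients the paper leaves to the cited references.
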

	{We only indicate the main ingredients of the proof {of Lemma \ref{Lemma:RN via unif integrable martingales}}. The proof of the ``if'' implication follows the exact same lines as the proof of \cite[Proposition~1.33]{Pisier}. {The ``only if'' direction} is an application of the Vitali Convergence Theorem  
(see \cite[Exercise 10 (b) on p. 133]{Rudin}).}

	\subsection{General preliminaries}
 Here we aim to state a general version of a well known principle that weakly null normalized sequences stay away from any finite dimensional subspace. 
 
\begin{lemma} \label{lem:liminf}
Let $X$ be a Banach space. Let $\sigma$ be a Hausdorff locally convex topology on $X$ such that $\norm{\cdot}$ is lower semi-continuous and let $(D_n)\subset X$ be a bounded sequence such that $D_n \rightarrow 0$ in $\sigma$. Then for every finite dimensional $Y \subseteq X$, we have that 
    \[
    \liminf_n \textup{dist} (D_n,Y)\geq \frac{\inf_n \|D_n\|}{2}. 
    \]
\end{lemma}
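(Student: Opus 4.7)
The plan is to mimic the classical argument for weakly null sequences: produce points in $Y$ that realize the distance, extract a convergent subsequence, and then combine the $\sigma$-lower semi-continuity of the norm with the triangle inequality. The one subtle point is that the hypothesis on $\sigma$ does not a priori guarantee that norm convergence implies $\sigma$-convergence, so I must justify that convergence in $Y$ (where everything happens) can be transferred from norm to $\sigma$.

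First, for each $n$ pick $y_n \in Y$ with $\|D_n - y_n\| = \dist(D_n, Y)$; this closest point exists because $Y$ is finite dimensional and the map $y\mapsto \|D_n - y\|$ is continuous and coercive. Since $0 \in Y$ we have $\|D_n - y_n\| \leq \|D_n\|$, and thus $\|y_n\|\leq \|D_n - y_n\| + \|D_n\|\leq 2\|D_n\|$, so $(y_n)$ is bounded in $Y$. Now extract a subsequence $(n_k)$ along which $\dist(D_{n_k},Y) \to \liminf_n \dist(D_n,Y)=:\alpha$, and then a further subsequence (still denoted $(n_k)$) along which $y_{n_k}\to y$ in norm for some $y\in Y$.

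The key observation is that on the finite dimensional subspace $Y$ the restriction of $\sigma$ is a Hausdorff vector space topology, and on a finite dimensional space all Hausdorff vector space topologies coincide with the norm topology. Consequently $y_{n_k}\to y$ also in $\sigma$. Combining this with $D_{n_k}\to 0$ in $\sigma$ yields $D_{n_k} - y_{n_k} \to -y$ in $\sigma$. The $\sigma$-lower semi-continuity of the norm then gives
\[
\|y\| \leq \liminf_k \|D_{n_k} - y_{n_k}\| = \alpha.
\]

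Finally, by the triangle inequality $\|D_{n_k}\| \leq \|D_{n_k} - y_{n_k}\| + \|y_{n_k}\|$. Letting $k\to\infty$ and using $\|y_{n_k}\|\to \|y\|\leq \alpha$, we obtain
\[
\inf_n \|D_n\|\ \leq\ \liminf_k \|D_{n_k}\|\ \leq\ \alpha + \|y\|\ \leq\ 2\alpha,
\]
which rearranges to $\alpha \geq \tfrac{1}{2}\inf_n \|D_n\|$, as required. The only genuine obstacle is the compatibility of $\sigma$ and the norm on $Y$, and this is handled by the standard finite-dimensional topology argument sketched above.
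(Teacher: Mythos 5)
Your proof is correct and follows essentially the same route as the paper's: choose (near-)nearest points $y_{n}\in Y$, use boundedness and finite-dimensionality to extract a norm-convergent subsequence $y_{n_k}\to y$, bound $\|y\|$ by the $\sigma$-lower semicontinuity of the norm, and finish with the triangle inequality. The only difference is in the subtle point you flagged: the paper sidesteps the question of $\sigma$-convergence of $(y_{n_k})$ by first converting $\|D_{n_k}-y_{n_k}\|\to\alpha$ into $\|D_{n_k}-y\|\to\alpha$ via norm convergence and then applying lower semicontinuity only to $D_{n_k}-y\to -y$ in $\sigma$, whereas your appeal to the uniqueness of the Hausdorff vector topology on the finite-dimensional subspace $Y$ is an equally valid (if slightly heavier) way to justify the same step.
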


\begin{proof} Let $(y_{n_k}) \subset Y$ be such that $\|D_{n_k}-y_{n_k}\| \to \liminf_n \text{dist}(D_n,Y)$.
Moreover, by finite dimension of $Y$ and the fact that $\sigma$ is locally convex and Hausdorff we can assume that $y_{n_k} \to y$ in norm. 
It follows that  
$$\|D_{n_k}-y\| \to \liminf_n \text{dist} (D_n,Y).$$ 
 And so 
\begin{equation}
\label{eq:lem:liminf_1}
    \|y\|\leq \liminf_n \text{dist} (D_n,Y)
\end{equation}
by the lower semi-continuity of $\norm{\cdot}$ since $D_{n_k}-y \to -y$ in $\sigma$ by the hypothesis.
Also 
\begin{equation}
\label{eq:lem:liminf_2}
    \inf_n \|D_n\| -\|y\|\leq \liminf_k(\|D_{n_k}\|-\|y\|)\leq \lim_k\|D_{n_k}-y\|=\liminf_n \text{dist} (D_n,Y).
\end{equation}
Thus summing \eqref{eq:lem:liminf_1} and \eqref{eq:lem:liminf_2} gives 
$$
\frac{\inf_n \|D_n\|}{2} \leq \liminf_n \text{dist}(D_n,Y). 
$$
\end{proof}

It is well known that topologies which satisfy the hypothesis of the lemma are for instance the weak topology $w=\sigma(X,X^*)$ or, more generally, any topology $\sigma(X,S)$ where $S$ is subspace of $X^*$ which is norming for $X$, meaning that for all $x\in X$, $\|x\|_X=\sup\{x^*(x),\ x^*\in B_{X^*}\cap S\}$.

In particular, for a Banach space $X$, denote $S \subseteq L_\infty(\Omega, \mathcal{A}, \mathbb{P},X^*)$ the normed linear space of $X^*$-valued simple functions, i.e. measurable functions with only finitely many distinct values. This space is easily seen to embed isometrically into $(L_1(\mathcal{A},X))^*$ (see \cite[p. 97]{Diestel}).

\begin{cor} \label{Cor:liminf}
Let $X$ be a Banach space and let $(D_n)\subset L_1(X)$ be a bounded sequence such that $D_n \rightarrow 0$ in $\sigma(L_1 (X), S)$. Then for every finite dimensional $Y \subseteq L_1(X)$, we have that 
    \[
    \liminf_n \textup{dist} (D_n,Y)\geq \frac{\inf_n \|D_n\|}{2}. 
    \]
\end{cor}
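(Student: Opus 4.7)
The plan is to reduce this directly to Lemma \ref{lem:liminf}, applied with the locally convex topology $\sigma := \sigma(L_1(X), S)$ on the Banach space $L_1(X)$. As noted in the remark following Lemma \ref{lem:liminf}, what needs to be checked is that $S$ is a \emph{norming} subspace of $(L_1(X))^*$, since this automatically ensures both that $\sigma$ is Hausdorff and that the norm of $L_1(X)$ is $\sigma$-lower semi-continuous (being a supremum of $\sigma$-continuous linear functionals).

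To verify that $S$ is norming, I would first handle simple functions in $L_1(X)$. Given a simple $f = \sum_{i=1}^k x_i \mathbf{1}_{A_i} \in L_1(X)$ with the $A_i$ disjoint, use Hahn--Banach to pick $x_i^* \in B_{X^*}$ with $\langle x_i, x_i^*\rangle = \|x_i\|_X$, and set $s := \sum_{i=1}^k x_i^* \mathbf{1}_{A_i} \in S$, which satisfies $\|s\|_\infty \leq 1$. A direct computation gives
\[
\int_\Omega \langle f(\omega), s(\omega)\rangle \, d\PP(\omega) = \sum_{i=1}^k \|x_i\|_X \, \PP(A_i) = \|f\|_{L_1(X)}.
\]
For an arbitrary $f \in L_1(X)$, the Bochner-simple functions are norm-dense in $L_1(X)$, so a standard $\varepsilon/3$ argument extends this equality to a supremum: $\|f\|_{L_1(X)} = \sup\{\int_\Omega \langle f, s\rangle \, d\PP : s \in S,\ \|s\|_\infty \leq 1\}$. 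Hence $S$ is norming.

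With $S$ norming, both hypotheses of Lemma \ref{lem:liminf} are satisfied for the triple $(L_1(X), \sigma, (D_n))$, and the stated conclusion follows immediately. I do not expect any real obstacle: the whole point of stating Corollary \ref{Cor:liminf} separately is precisely that the simple $X^*$-valued functions form a concrete, workable norming subspace of $(L_1(X))^*$, which is the form in which Lemma \ref{lem:liminf} will be invoked later in the paper (presumably to handle Dunford--Pettis or Radon--Nikod\'ym arguments for $L_1(X)$-valued martingale tails).
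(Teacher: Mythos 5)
Your proposal is correct and follows essentially the same route as the paper: the paper's proof also observes that the simple $X^*$-valued functions form a norming subspace of $L_1(X)^*$ (hence the norm is $\sigma(L_1(X),S)$-lower semi-continuous and the topology is Hausdorff) and then invokes Lemma~\ref{lem:liminf}. Your explicit verification that $S$ is norming, via Hahn--Banach on simple functions and density, simply fills in a detail the paper leaves to the reader.
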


\begin{proof} 
It follows from the definition of the Bochner-Lebesgue space that the space $S$ of simple functions is norming for $L_1(X)$. 
As we already mentioned it can be easily deduced that $\|\cdot\|$ is $\sigma(L_1(X),S)$-lower semi-continuous.
We can thus apply Lemma~\ref{lem:liminf} to get the desired conclusion.
\end{proof}
	
	\section{Dunford-Pettis Lipschitz operators}
	\label{section:DP}

 In this section we prove the first ``half'' of Theorem~\ref{t:flagship}.
	
	\begin{thm}\label{t:DP}
		{Let $M, N$ be pointed metric spaces, with $M$ complete,} and let $f \in \Lip_0(M,N)$. 
		Then the following properties are equivalent:
		\begin{enumerate}
			\item $f$ is curve-flat,
			\item $\widehat{f}$ is Dunford-Pettis,
			\item $\widehat{f}$ does not fix any copy of $L_1$.
		\end{enumerate}
	\end{thm}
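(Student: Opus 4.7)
The plan is to prove the cycle $(2)\Rightarrow(3)\Rightarrow(1)\Rightarrow(2)$. For $(2)\Rightarrow(3)$: if $\widehat f$ were Dunford-Pettis but fixed a copy of $L_1$, then the image under $\widehat f$ of the normalized Rademacher sequence of that copy would be weakly null (by continuity of $\widehat f$) while remaining bounded below in norm (by the isomorphic embedding provided by the fixing), a contradiction.

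For $(3)\Rightarrow(1)$, I argue contrapositively. If $f\notin \CF_0(M,N)$, Proposition~\ref{prop:curveflat}(iii) produces a bi-Lipschitz curve $\gamma:K\to M$ with $\H^1(f\circ\gamma(K))>0$. Applying Kirchheim's lemma to $f\circ\gamma:K\to N$ yields a compact $A\subset K$ with $\lambda(A)>0$ on which $f\circ\gamma$ is bi-Lipschitz; the identity $(\gamma\restricted_A)^{-1}=(f\circ\gamma\restricted_A)^{-1}\circ f\restricted_{\gamma(A)}$ then shows that $\gamma\restricted_A$ remains bi-Lipschitz as well. Consequently both $\widehat{\gamma\restricted_A}:\Free(A)\to \Free(M)$ and $\widehat{f}\circ\widehat{\gamma\restricted_A}=\widehat{f\circ\gamma\restricted_A}:\Free(A)\to \Free(N)$ are isomorphic embeddings, so $\widehat f$ is an isomorphism on the subspace $\widehat{\gamma\restricted_A}(\Free(A))\subset \Free(M)$. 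Since $A\subset\R$ is compact with positive Lebesgue measure it is not p-1-u, so Theorem~C of \cite{AGPP} furnishes an isomorphic copy of $L_1$ inside $\Free(A)$; transporting it via $\widehat{\gamma\restricted_A}$ exhibits a copy of $L_1$ in $\Free(M)$ on which $\widehat f$ is an isomorphism, contradicting $(3)$.

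The main obstacle is $(1)\Rightarrow(2)$. Assuming $f$ curve-flat and, for contradiction, $\widehat f$ not Dunford-Pettis, there exist $c>0$ and a weakly null sequence $(\mu_n)\subset B_{\Free(M)}$ with $\|\widehat f(\mu_n)\|_{\Free(N)}\geq c$. The goal is to extract from $(\mu_n)$ a bi-Lipschitz curve $\gamma:K\to M$ (with $\lambda(K)>0$) along which $f$ does not collapse distances, so that $\H^1(f\circ\gamma(K))>0$ contradicts Proposition~\ref{prop:curveflat}(iii). Following the philosophy of the proof of Theorem~A in \cite{AGPP} (streamlined by Eriksson-Bique in our appendix), I would first reduce, by a standard density argument, to the case of finitely supported $\mu_n$; then apply Corollary~\ref{Cor:liminf} in a suitable $L_1(\Free(N))$-valued martingale realization to inductively build a dyadic tree of elementary molecules $(m_{x_s,y_s})_{s\in\{0,1\}^{<\omega}}$ in $\Free(M)$ whose $\widehat f$-images stay uniformly bounded below in norm and mutually well separated; finally combine the tree with a martingale convergence argument and Kirchheim's lemma to extract, in the limit, a bi-Lipschitz curve fragment in $M$ that $f$ does not flatten. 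The principal obstacle is precisely this last extraction: converting the purely functional-analytic gap $\|\widehat f(\mu_n)\|\geq c$ into a genuinely one-dimensional Lipschitz object inside $M$ that survives post-composition with $f$ is where the metric geometry of $M$ (via Kirchheim's area formula and selection lemma) must be reconciled with the predual/weak$^*$ structure of $\Free(M)$, and it is the technically delicate heart of the argument.
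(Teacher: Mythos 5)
Your arguments for $(2)\Rightarrow(3)$ and $(3)\Rightarrow(1)$ are correct and essentially the ones used in the paper; the only cosmetic difference is that in $(3)\Rightarrow(1)$ you invoke Theorem~C of \cite{AGPP} to find a copy of $L_1$ inside $\F(A)$ for a compact $A\subset\R$ of positive measure, where the paper uses Godard's theorem \cite{Godard} that $\F(A)$ is itself isomorphic to $L_1$; both are valid.

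The genuine gap is $(1)\Rightarrow(2)$, which you yourself flag as unresolved: what you offer is a plan, not a proof, and the plan points in a direction that is not known to work. Starting from a weakly null sequence $(\mu_n)$ with $\|\widehat f(\mu_n)\|\geq c$ and trying to ``extract a bi-Lipschitz curve fragment in $M$ that $f$ does not flatten'' would in effect prove directly that ``not Dunford--Pettis implies not curve-flat'' by producing one-dimensional structure inside $M$ from purely functional-analytic data; no such extraction mechanism is available, and the machinery you invoke does not supply one (Corollary~\ref{Cor:liminf} and martingale realizations belong to the Radon--Nikod\'ym half of the argument, Theorem~\ref{t:RN}, and Kirchheim's lemma only converts \emph{given} curves with $\H^1$-positive image into bi-Lipschitz fragments -- it cannot manufacture a curve from a sequence of molecules). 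The workable route, followed in the paper, runs on the dual side: Bate's perturbation result (Lemma~\ref{Lemma_3.4_Bate} of \cite{Bat20}) shows that on a \emph{compact} $M$ every curve-flat function is a weak$^*$ limit of uniformly bounded $\varepsilon$-flat functions (Lemma~\ref{Lemma_Bate}); one then runs a Kalton-style gliding-hump construction \cite{Kalton04} on the functions $g_n\circ f$ calibrated against the weakly null sequence to reach a contradiction (Proposition~\ref{prop:CFimpliesDPCompactCase}). Your sketch also omits the second, independent ingredient needed for merely complete (non-compact) $M$: the compact determination of the Dunford--Pettis property of $\widehat f$ via the compact reduction principle of \cite{APP21} (Proposition~\ref{prop:DPisCompactlyDetermined}). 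Finally, note that the Eriksson--Bique argument you appeal to cannot be adapted here: as explained in the appendix, iterated flattening requires each approximant to remain curve-flat, which fails outside the p-1-u setting (Example~\ref{cor:Example_CF_not_Factp-1-u} gives a compact $M$ with a nonconstant curve-flat function but only constant uniformly locally flat functions). So the heart of the theorem is missing from your proposal.
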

    
		{It has been proved in~\cite{Bourgain1980IJM} that if $T:L_1 \to L_1$ is not Dunford-Pettis, then it fixes a copy of $\ell_1(\ell_2)$. The equivalence of $(2)$ and $(3)$ in Theorem \ref{t:DP} shows that linearizations of Lipschitz maps which are not Dunford-Pettis satisfy a stronger property.}

	The majority of this section is dedicated to showing that (1) implies (2) in Theorem \ref{t:DP}. Indeed, as we will later see in its proof, (2) implies (3) is direct since $L_1$ contains a normalized weakly null sequence, while (3) implies (1) follows from Kircheim's Lemma and Godard's work on Lipschitz-free spaces over subsets of the real line.

	This section is divided into three subsections. In the first one we address (1) implies (2) of Theorem \ref{t:DP} in the case where $M$ is any compact metric space, leveraging a profound result established by D. Bate \cite{Bat20}. In the second, we deal with the general case thanks to a compact reduction principle along the lines of \cite{APP21}. In the last subsection we use both previous results to finally prove Theorem \ref{t:DP}.

	\subsection{\texorpdfstring{$f$}{f} is curve-flat implies \texorpdfstring{$\widehat{f}$}{f hat} is Dunford-Pettis: The compact case}

	As previously mentioned, a key ingredient in this setting is a result due to D. Bate \cite{Bat20}. To ensure clarity and ease of reference, we will present this result in its entirety. First, let us introduce some notation.
	
	\begin{definition}
		Given $\varepsilon>0$ and $\theta>0$, we say that a Lipschitz map $f\colon M\rightarrow N$ is \emph{$\varepsilon$-flat at radius $\theta$} if $d(f(x),f(y))\leq \varepsilon d(x,y)$ for all $x,y\in M$ with $d(x,y)\leq \theta$. 
		It is \emph{$\varepsilon$-flat} if it is $\varepsilon$-flat at radius $\theta$ for some $\theta>0$. Denote by $\text{lip}_0^{\varepsilon}(M)$ the set of $\varepsilon$-flat Lipschitz functions in $\text{Lip}_0(M)$. Note that if $\varepsilon_1<\varepsilon_2$, then $\text{lip}_0^{\varepsilon_1}(M)\subset \text{lip}_0^{\varepsilon_2}(M)$.	
	\end{definition}

	In the next lemma, if $t \in K \subset \R$ and $\gamma : K \to M$ is Lipschitz, then $\Lip(\gamma,t)$ stands for the \textit{pointwise Lipschitz constant} of $\gamma$ at $t$, defined as:
	$$ \Lip(\gamma,t):=\limsup_{x \to t} \dfrac{d(\gamma(t) , \gamma(x))}{|t-x|}.$$
	
	\begin{lem}[{\cite[Lemma~3.4]{Bat20}}] \label{Lemma_3.4_Bate}
		Let $M$ be a compact metric space, let $X$ be a Banach space which contains $M$ isometrically. Let $\delta>0$ and let $x^*\in X^*$. Suppose that every bi-Lipschitz curve $\gamma\colon K\rightarrow X$ satisfying 
		$$(x^*\circ \gamma)'(t)\geq \delta\|x^*\|\Lip(\gamma,t),\qquad \mathcal{H}^1\text{-a.e. in }t\in K $$
		also satisfies $\mathcal{H}^1(\gamma(K)\cap M)=0$.

		Then, for every $\varepsilon>0$, there exists a Lipschitz function $g\colon X\rightarrow \mathbb{R}$ such that:
		\begin{itemize}
			\item $\frac{|g(x)-g(y)|}{\|x-y\|}\leq \frac{|x^*(x)-x^*(y)|}{\|x-y\|}+3\|x^*\|\delta$ for every $x\neq y\in X$. In particular, $\|g\|_{L}\leq \|x^*\|(1+3\delta)$,
			\item $|x^*(x)-g(x)|<\varepsilon$ for all $x\in X$, and
			\item there exists $\theta>0$ such that $g\restricted_{M}$ is $3\delta{\|x^*\|}$-flat at radius $\theta$.
		\end{itemize}
	\end{lem}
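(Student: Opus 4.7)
I would seek $g$ as a controlled modification of $x^*$ that is insensitive to small-scale variations of $x^*$ along $M$ but agrees globally with $x^*$ up to $\varepsilon$. A natural candidate is a two-scale inf-convolution: for a parameter $\theta>0$ to be chosen small later, define
\[
g(x) := \inf\Big\{\, x^*(x_0) + \sum_{i=1}^{N} c(x_{i-1},x_i) \;:\; N\in\mathbb N,\; x_0,\ldots,x_N\in X,\; x_N=x\,\Big\},
\]
where the cost $c(u,v)$ is equal to $3\delta\|x^*\|\|u-v\|$ on short steps ($\|u-v\|\leq\theta$) and to $|x^*(u)-x^*(v)|+3\delta\|x^*\|\|u-v\|$ on long steps. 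The exact form of $c$ is tuned so that (i) concatenation yields the slope estimate of the first bullet, (ii) the trivial one-point chain yields $g\leq x^*$, and (iii) short chains between points of $M$ at distance $\leq\theta$ realize the flatness bound of the third bullet.

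\textbf{Step 1: Quantifying the hypothesis.} The hypothesis is a statement about bi-Lipschitz curves in $X$; by the Area Formula and Kirchheim's lemma I would upgrade it, using compactness of $M$, to the following quantitative form: \emph{for every $\eta>0$ there exists $\theta>0$ such that for all $x,y\in M$ with $\|x-y\|\leq\theta$ one can find a finite chain $x=z_0,\ldots,z_N=y$ in $X$ with $\sum_i\|z_i-z_{i-1}\|\leq\|x-y\|+\eta$ and $\sum_i |x^*(z_i)-x^*(z_{i-1})|\leq 3\delta\|x^*\|\,\|x-y\|$.} The proof is by contradiction and Arzelà--Ascoli: if no such $\theta$ worked for some fixed $\eta>0$, one could extract from the failing pairs a sequence of near-geodesic, ``$x^*$-steep'' chains inside $M$ that, after rescaling, converge to a bi-Lipschitz curve $\gamma:K\to X$ meeting $M$ on a set of positive $\mathcal H^1$-measure with $(x^*\circ\gamma)'(t)\geq \delta\|x^*\|\,\Lip(\gamma,t)$ a.e., contradicting the hypothesis.

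\textbf{Step 2: Verification of the three conclusions.} With $\theta$ from Step 1 fixed and $g$ defined as above:
\begin{itemize}
    \item The slope bound (first bullet) follows by subadditivity of the inf-convolution: any near-optimal chain witnessing $g(y)$ can be prolonged to $x$ by one extra step of cost at most $|x^*(x)-x^*(y)|+3\delta\|x^*\|\|x-y\|$.
    \item The $L^\infty$-closeness $|g-x^*|<\varepsilon$ (second bullet) follows because the trivial chain gives $g\leq x^*$, while the quantitative estimate of Step 1 (with $\eta$ chosen small enough relative to $\varepsilon$) shows that any chain can save at most $O(\eta)$ on the native $x^*$-increment, so that $g(x)\geq x^*(x)-\varepsilon$.
    \item The $3\delta\|x^*\|$-flatness of $g|_M$ at radius $\theta$ (third bullet) is immediate: for $x,y\in M$ with $\|x-y\|\leq\theta$, concatenating any near-optimal chain for $g(y)$ with the Step-1 chain from $y$ to $x$ yields $g(x)\leq g(y)+3\delta\|x^*\|\|x-y\|$; symmetry gives the reverse inequality.
\end{itemize}

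\textbf{Main obstacle.} The hardest ingredient is the compactness-based extraction in Step 1: promoting the qualitative ``no bi-Lipschitz curve'' hypothesis into a uniform scale $\theta$ requires a delicate argument combining a Vitali-type selection of the offending sub-chains with an Arzelà--Ascoli extraction of a limiting bi-Lipschitz curve, and it is the place where compactness of $M$ (and hence its embedded copy inside $X$) is indispensable. Once the quantitative form of the hypothesis is in hand, the construction of $g$ and the verification of its three properties reduce to essentially routine inf-convolution manipulations.
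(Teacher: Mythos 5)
First, note that the paper does not prove this lemma at all: it is imported verbatim from Bate \cite[Lemma~3.4]{Bat20}, so your attempt must be judged against Bate's argument rather than anything internal to this paper. Judged on its own terms, your proposal has a fatal gap in Step 1. For \emph{any} chain $x=z_0,\dots,z_N=y$ in $X$, the telescoping identity $\sum_i \big(x^*(z_i)-x^*(z_{i-1})\big)=x^*(y)-x^*(x)$ together with the triangle inequality gives $\sum_i |x^*(z_i)-x^*(z_{i-1})|\geq |x^*(x)-x^*(y)|$. Hence the quantitative statement you claim in Step 1 would force $|x^*(x)-x^*(y)|\leq 3\delta\|x^*\|\,\|x-y\|$ for all $x,y\in M$ with $\|x-y\|\leq\theta$, i.e.\ it asserts that $x^*$ itself is already $3\delta\|x^*\|$-flat on $M$ at radius $\theta$, in which case the lemma would be trivial with $g=x^*$. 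This is not implied by the hypothesis, which only constrains curve fragments of positive $\mathcal{H}^1$-measure: take $M=\{0\}\cup\{2^{-n}v:n\in\mathbb{N}\}$ with $\|v\|=1$ and $x^*(v)=\|x^*\|$; then $M$ is compact and countable, so every curve fragment meets $M$ in an $\mathcal{H}^1$-null set and the hypothesis holds vacuously, yet nearby points of $M$ satisfy $|x^*(x)-x^*(y)|=\|x^*\|\,\|x-y\|$ at all small scales. No contradiction-plus-Arzel\`a--Ascoli argument can rescue Step 1, because the statement being extracted is simply false. What you are missing is the role of $\varepsilon$: the flatness of $g\restricted_M$ at small radius is not obtained for free from chains with small $x^*$-variation; it is \emph{bought} by spending up to $\varepsilon$ of sup-norm perturbation to flatten $g$ near configurations like the one above. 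In your scheme $\varepsilon$ only enters through the length slack $\eta$, which is the wrong mechanism. (Your instinct that compactness of $M$ and an Arzel\`a--Ascoli-type extraction of a limiting steep curve fragment is the crux is indeed in the spirit of Bate's proof, but the uniform statement obtained there is of a different, measure-theoretic nature and interacts with the $\varepsilon$-budget.)

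A secondary but also genuine defect: as written, your inf-convolution is identically $-\infty$ whenever $3\delta<1$. Since discounted short steps are allowed anywhere in $X$, start from $x_0=x-ru$ with $\|u\|=1$ and $x^*(u)=\|x^*\|$, and return to $x$ along the segment in steps of length at most $\theta$; the total cost is $3\delta\|x^*\|r$, so $g(x)\leq x^*(x)-(1-3\delta)\|x^*\|r\to-\infty$ as $r\to\infty$. So even the definition of $g$ would have to be rebuilt: in Bate's construction the discount is attached to steep curve fragments (those aligned with $x^*$ in the sense of the hypothesis), not to arbitrary short steps, and it is precisely the hypothesis, via a compactness argument for such fragments, that keeps the resulting perturbation within $\varepsilon$ of $x^*$.
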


Clearly, if $0 \in M \subset X$, the function $g$ from the above lemma can be also assumed to satisfy $g(0)=0$. 
This is how we will use it below.
The next lemma and its proof are essentially contained in Section~3 of \cite{Bat20}.  We present a direct proof here for clarity and convenience.

	\begin{lem}
		\label{Lemma_Bate}
		If $M$ is compact then $B_{\CF_0(M)}\subset \overline{B_{\lip_0^\tau(M)}}^{w^*}$  for every $\tau>0$.
	\end{lem}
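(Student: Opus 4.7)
The plan is to apply Bate's lemma (Lemma~\ref{Lemma_3.4_Bate}) with $X:=\F(M)$ and $M$ isometrically embedded via $\delta_M$, taking as the functional $x^\ast\in X^\ast\equiv\Lip_0(M)$ a slight contraction of $f$ itself. Since $M$ is compact, $\F(M)$ is separable, so $B_{\Lip_0(M)}$ is weak$^\ast$-metrizable and weak$^\ast$ convergence on it coincides with pointwise convergence on $M$. It therefore suffices to prove: for any $f\in B_{\CF_0(M)}\setminus\{0\}$ (the case $f=0$ being trivial), any finite $F\subset M$, and any $\eta_0>0$, there exists $h\in B_{\lip_0^\tau(M)}$ with $|h(x)-f(x)|<\eta_0$ for every $x\in F$.

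Choose $\eta\in(0,1)$ small enough that $\eta\|f\|_L\leq\tau$ and $\eta\,\text{rad}(M)\|f\|_L<\eta_0/2$, and set $x^\ast=(1-\eta)f$, $\delta=\eta/(3(1-\eta))$, and $\varepsilon=\eta_0/4$. These choices are tailored so that $\|x^\ast\|(1+3\delta)=\|f\|_L\leq1$ and $3\delta\|x^\ast\|=\eta\|f\|_L\leq\tau$, which are precisely the Lipschitz and flatness bounds required from the output of Bate's lemma.

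The crucial step is to verify Bate's hypothesis for this $x^\ast$. Given a bi-Lipschitz curve $\gamma\colon K\to X$ with lower Lipschitz constant $c>0$, suppose $\H^1(\gamma(K)\cap\delta_M(M))>0$. Let $A:=\gamma^{-1}(\delta_M(M))$; then $\H^1(A)>0$, and writing $\gamma(t)=\delta_M(m(t))$ for $t\in A$ defines a bi-Lipschitz curve $m\colon A\to M$ with the same constants as $\gamma$. By curve-flatness of $f$ applied to $m$, one has $\lim_{s\to t,\,s\in A}|f(m(s))-f(m(t))|/|s-t|=0$ for $\H^1$-a.e.\ $t\in A$; since $x^\ast\circ\gamma|_A=(1-\eta)(f\circ m)$, the same vanishing holds for $(x^\ast\circ\gamma)|_A$. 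The Lipschitz map $x^\ast\circ\gamma\colon K\to\R$ is differentiable $\H^1$-a.e.\ on $K$ by Rademacher, and at any differentiability point $t\in A$ which is also a density point of $A$ in $\R$, the derivative $(x^\ast\circ\gamma)'(t)$ must coincide with the limit along $A$, so $(x^\ast\circ\gamma)'(t)=0$. On the other hand $\Lip(\gamma,t)\geq c>0$ for every $t\in K$ and $\|x^\ast\|>0$, so the reverse strict inequality $(x^\ast\circ\gamma)'(t)<\delta\|x^\ast\|\Lip(\gamma,t)$ holds on a positive-measure subset of $A$. Hence Bate's hypothesis fails for any such $\gamma$, i.e., it is vacuously satisfied, and Bate's lemma produces $g\colon X\to\R$ with $\|g\|_L\leq1$, $|g(y)-x^\ast(y)|<\varepsilon$ for all $y\in X$, and $g|_{\delta_M(M)}$ being $\tau$-flat at some positive radius.

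Setting $h:=(g-g(\delta_M(0)))\circ\delta_M$ preserves $\|g\|_L$ and the flatness constant while ensuring $h(0)=0$, so $h\in B_{\lip_0^\tau(M)}$. Using $f(0)=x^\ast(\delta_M(0))=0$, for $x\in F$ one computes
\[
|h(x)-f(x)|\leq |g(\delta_M(x))-x^\ast(\delta_M(x))|+|g(\delta_M(0))-x^\ast(\delta_M(0))|+\eta|f(x)|<2\varepsilon+\eta\,\text{rad}(M)\|f\|_L<\eta_0,
\]
completing the approximation. The main technical obstacle is the derivative-matching step in the verification of Bate's hypothesis: one must identify the classical derivative $(x^\ast\circ\gamma)'(t)$, defined via limits over all of $K$, with the vanishing limit along the subset $A=\gamma^{-1}(\delta_M(M))$, which is where curve-flatness of $f$ supplies information. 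This identification rests on the Lebesgue density theorem applied to $A\subset\R$.
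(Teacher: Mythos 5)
Your proof is correct and follows essentially the same route as the paper: apply Bate's Lemma~\ref{Lemma_3.4_Bate} to (a multiple of) the linear extension $\overline{f}$, verify its hypothesis by restricting a bi-Lipschitz curve to $A=\gamma^{-1}(\delta_M(M))$ and using curve-flatness there, and then use pointwise (weak$^*$) approximation on bounded sets. The only differences are cosmetic: you pre-contract $f$ to $(1-\eta)f$ so the output of Bate's lemma is already normalized, whereas the paper applies the lemma to $\overline{f}$ itself and rescales the resulting function by $\frac{1}{1+3\delta}$, with the same $\mathrm{rad}(M)$-type error estimate.
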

	
	\begin{proof}
        	Let $f\in \CF_0(M)$ with $\|f\|_{L}\leq 1$, and let $\tau>0$. Since the weak$^*$ topology coincides with the topology of pointwise convergence on norm-bounded subsets of $\text{Lip}_0(M)$, {it suffices to show that for every $\eta>0$, there exists a function $g\in \text{lip}_0^{\tau} (M)$ with $\|g\|_\text{Lip}\leq 1$ such that $|f(x)-g(x)|<2\eta$ for all $x\in M$}.

		We start by showing that the linear extension of $f$, i.e. the map $\overline{f}\colon \mathcal{F}(M)\rightarrow \mathbb{R}$, satisfies the hypothesis of Lemma \ref{Lemma_3.4_Bate} for every $\delta>0$. Suppose that $\gamma\colon K\rightarrow \F(M)$ is a bi-Lipschitz curve satisfying 
		$$(\overline{f} \circ \gamma)'(t)\geq \delta\|\overline{f}\|\Lip(\gamma,t),\qquad \mathcal{H}^1\text{-a.e. in }t\in K. $$
		We may of course assume that $f\neq 0$, so that the right-hand side is everywhere strictly positive. Now, arguing by contradiction, suppose that $\mathcal{H}^1(\gamma(K)\cap M)>0$. Clearly $A := \gamma^{-1}(\gamma(K)\cap M)\subset K$ has positive measure, and the curve $\gamma\restricted_{A}\colon A\rightarrow M$ is bi-Lipschitz with values in $M$. 
		Since $f$ is curve-flat, we obtain that $(f\circ \gamma\restricted_{A})'(t)=0$ almost everywhere in $A$, which is a contradiction.

		Now, we can apply Lemma~\ref{Lemma_3.4_Bate} to $\overline{f}$, with {$\varepsilon=\eta$ and $\delta=\delta(\tau,\eta)>0$} to be determined later, to obtain $\tilde{g}\in \text{lip}_0^{3\delta}(M)$ with $\|\tilde{g}\|_L\leq (1+3\delta)$ and such that {$|\tilde{g}(x)-f(x)|<\eta$} for all $x\in M$. We only need to normalize $\tilde{g}$ to get the result. Indeed, consider $g=\frac{1}{1+3\delta}\tilde{g}$. Then $g$ also belongs to $\text{lip}_0^{3\delta}(M)$ and now $\|g\|_L\leq 1$. Finally, we have for any $x\in M$:
		{\begin{align*}
			|g(x)-f(x)| &\leq |\tilde{g}(x)-f(x)|+\Big(1-\frac{1}{1+3\delta}\Big)|\tilde{g}(x)| \\
			&\leq \eta+\Big(1-\frac{1}{1+3\delta}\Big)(1+3\delta)\text{rad}(M) \\
            &= \eta + 3\delta\text{rad}(M).
		\end{align*}}
		Consequently, for {$\delta = \min\left\{\frac{\tau}{3},\frac{\eta}{3\text{rad}(M)}\right\}$}, we obtained the desired conclusion. 
	\end{proof}
	
	We can now prove the main result of this subsection. We will employ a gliding hump type argument inspired by Kalton's proof that the Lipschitz-free space over a snowflaked metric space has the Schur property 
 (\cite[Theorem 4.6]{Kalton04}). For the next proof, recall the notation $[S]_{\delta} := \{x \in M : d(x,S) \leq \delta\}$ for $S$ a non empty subset of $M$ and $\delta>0$.
	
	\begin{prop} \label{prop:CFimpliesDPCompactCase}
		Let $M$ and $N$ be metric spaces with $M$ compact, and let $f \in \Lip_0(M,N)$. If $f$ is curve-flat, then $\widehat{f}\colon \mathcal{F}(M)\rightarrow \mathcal{F}(N)$ is Dunford-Pettis.
	\end{prop}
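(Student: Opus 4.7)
The plan is to argue by contradiction. Assuming $\widehat{f}$ is not Dunford-Pettis produces a weakly null sequence $(\mu_n) \subset B_{\F(M)}$ with $\inf_n \|\widehat{f}(\mu_n)\| \geq c > 0$. After rescaling so that $\|f\|_L \leq 1$, the standard duality $\Lip_0(N) = \F(N)^*$ yields $g_n \in B_{\Lip_0(N)}$ with $\langle g_n, \widehat{f}(\mu_n)\rangle = \langle g_n \circ f, \mu_n\rangle \geq c - 1/n$. Because the composition of a curve-flat map with a Lipschitz function is curve-flat (Proposition~\ref{prop:curveflat}(iv)), each $g_n \circ f$ lies in $B_{\CF_0(M)}$. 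This is the sequence of test functions against which I will run the rest of the argument.

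Next I would exploit the compactness of $M$ to tame these test functions. By Arzelà-Ascoli, $B_{\Lip_0(M)}$ is sequentially compact and metrizable in the pointwise (=weak$^*$) topology, so after passing to a subsequence $g_n \circ f \to \Phi$ uniformly on $M$, and Proposition~\ref{p:CFwsClosed} ensures $\Phi \in B_{\CF_0(M)}$. Weak nullity of $(\mu_n)$ forces $\langle \Phi, \mu_n\rangle \to 0$, so the shifted sequence $\varphi_n := g_n \circ f - \Phi \in 2B_{\CF_0(M)}$ tends to $0$ uniformly on $M$ while still satisfying $\langle \varphi_n, \mu_n\rangle \geq c/2$ for $n$ large. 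At this point the sequence of test functions is both curve-flat and uniformly small in sup-norm.

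Now I invoke Lemma~\ref{Lemma_Bate}, which is the place where the hypothesis that $f$ is curve-flat is used through Bate's deep result. Applied (after suitable rescaling) to $\varphi_n \in 2B_{\CF_0(M)}$ with a freely chosen sequence $\tau_n \downarrow 0$, it produces $h_n \in 2B_{\lip_0^{\tau_n}(M)}$ lying arbitrarily close to $\varphi_n$ in the weak$^*$ topology; in particular, since the approximation is effective against the single fixed functional $\mu_n$, I can pick $h_n$ with $|\langle h_n - \varphi_n, \mu_n\rangle| < 1/n$, hence $\langle h_n, \mu_n\rangle \geq c/3$ for all $n$ large.

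The final and hardest step is to contradict $\langle h_n, \mu_n\rangle \geq c/3$. The naive Arzelà-Ascoli extraction does not suffice: a uniform limit $h$ of $(h_n)$ need not be uniformly locally flat because the flatness radii $\theta_n$ may collapse to $0$ independently of $\tau_n$, and uniform convergence is too weak to transfer Lipschitz pairings to weakly null molecules. Following the Kalton blueprint mentioned in the text, I would construct inductively a subsequence $(n_k)$ together with near-molecular decompositions $\mu_{n_k} = \mu_{n_k}' + \mu_{n_k}''$ where $\mu_{n_k}'$ is carried by pairs $(x,y)$ with $d(x,y) \leq \theta_{n_k}$, so that $\tau_{n_k}$-flatness of $h_{n_k}$ kills the pairing $\langle h_{n_k}, \mu_{n_k}'\rangle$, while $\mu_{n_k}''$ lives at scales exceeding $\theta_{n_k}$ and on which the equicontinuous family assembled at previous steps can be used via the weak nullity of $(\mu_n)$ to make $\langle h_{n_k}, \mu_{n_k}''\rangle$ arbitrarily small. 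Carrying out this multi-scale matching between the flatness radii of $(h_n)$ and the mass distribution of $(\mu_n)$ is the main obstacle; once achieved it forces $\langle h_{n_k}, \mu_{n_k}\rangle \to 0$, yielding the required contradiction.
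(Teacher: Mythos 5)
Your setup (normalize, extract $g_n\in B_{\Lip_0(N)}$, note $g_n\circ f\in B_{\CF_0(M)}$, recenter by the Arzelà--Ascoli limit, and approximate by $\varepsilon$-flat functions via Lemma~\ref{Lemma_Bate}) is fine and parallels the paper's use of Bate's lemma, but the proof stops exactly where the real difficulty begins: you arrive at $h_n$ flat at radius $\theta_n$ with $\langle h_n,\mu_n\rangle\geq c/3$ and $(\mu_n)$ weakly null, correctly observe that no limiting argument applies because the radii $\theta_n$ may collapse, and then only announce a strategy for the contradiction. That announced strategy has two unsubstantiated ingredients. First, the ``near-molecular decomposition'' $\mu_{n_k}=\mu'_{n_k}+\mu''_{n_k}$ with $\mu'_{n_k}$ ``carried by pairs $(x,y)$ with $d(x,y)\leq\theta_{n_k}$'' is not an available tool: a general element of $\F(M)$ has no canonical representation by molecules at prescribed scales, and even when such a representation exists its total coefficient mass is not controlled by the norm, so $\tau_{n_k}$-flatness of $h_{n_k}$ does not ``kill'' the pairing with $\mu'_{n_k}$. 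Second, making $\langle h_{n_k},\mu''_{n_k}\rangle$ small ``via weak nullity'' is circular, since weak nullity only controls pairings against a \emph{fixed} functional, whereas $h_{n_k}$ varies with $k$ --- which is precisely the obstruction you yourself identified. So the proposal has a genuine gap at its decisive step.

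For comparison, the paper avoids any decomposition of the free-space elements. It first reduces to $\gamma_k$ with \emph{finite support}, then runs a gliding-hump induction on cumulative sums $\varphi_{n+1}=\varphi_n+h_{n+1}$: at each step a finite $\tau$-net $E_{n+1}$ containing the supports of the previously selected $\gamma_{k_j}$ is fixed, Lemma~\ref{lem:liminf} plus Hahn--Banach produce $g_{n+1}$ vanishing on $f(E_{n+1})$ but pairing $>\frac12-\tau$ with a new $\gamma_{k_{n+1}}$, and Lemma~\ref{Lemma_Bate} converts $g_{n+1}\circ f$ into a flat bump $h_{n+1}$ of tiny sup-norm; the flatness radius of $\varphi_n$ keeps the Lipschitz norms of the partial sums uniformly bounded, and the weak$^*$ limit of $(\varphi_n)$ then pairs $>\frac14$ with \emph{every} selected $\gamma_{k_j}$, contradicting weak nullity. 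The essential ideas your plan is missing are the reduction to finitely supported elements, the choice of test functionals vanishing on $f$ of a finite net (so that new bumps are nearly orthogonal to all previously chosen elements), and the summation of the bumps so that a single fixed limit function witnesses the failure of weak nullity.
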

	
	\begin{proof}
	Suppose that $f\colon M\rightarrow N$ is a curve-flat Lipschitz map with $\|f\|_\text{Lip}=1$. We will prove by contradiction that $\widehat{f}\colon \mathcal{F}(M)\rightarrow\mathcal{F}(N)$ is Dunford-Pettis. Suppose there exists a weakly null sequence $(\gamma_k)_k\subset \mathcal{F}(M)$ such that $\|\widehat{f}(\gamma_k)\|= 1$ for all $k\in\mathbb{N}$. We may assume without loss of generality that the support of $\gamma_k$ is finite for all $k\in\mathbb{N}$. Choose a decreasing sequence of positive numbers $(\varepsilon_n)_n$ such that $\sum_{n} \varepsilon_n<\frac{1}{4}$. We will construct, by induction, a bounded sequence of functions $(\varphi_n)_n\subset \text{Lip}_0(M)$ and a subsequence $(\gamma_{k_n})_n$ such that:
		
		\begin{enumerate}
			\item $\varphi_n$ is $\left(\sum_{i=1}^n\varepsilon_i\right)$-flat at some positive radius $0<\theta_n<1$,
			\item $\|\varphi_n\|_{L}\leq 1+\sum_{i=1}^n \varepsilon_i$, and
			\item $\langle \varphi_n,\gamma_{k_j}\rangle \geq \frac{1}{2}-\sum_{i=1}^n\varepsilon_i$ for all $j=1,\dots,n$.    
		\end{enumerate}
		
		For the initial step, let $\gamma_{k_1}=\gamma_1$. Since $\|\widehat{f}(\gamma_1)\|=1$, there exists a Lipschitz function $g_1\in {B_{\Lip_0(N)}}$ such that $\langle g_1,\widehat{f}(\gamma_1)\rangle =1 = \langle g_1\circ f,\gamma_1\rangle$. The function $g_1\circ f$ is curve-flat, so by Lemma \ref{Lemma_Bate} there exists $\varphi_1\in B_{\text{Lip}_0(M)}$ such that $\langle \varphi_1,\gamma_1\rangle \geq 1-\varepsilon_1$ and such that $\varphi_1$ is $\varepsilon_1$-flat at some radius $\theta_1\in(0,1)$. 
		
		Let us fix $n\in\mathbb{N}$ and suppose we have constructed $\gamma_{k_j}$ and $\varphi_j$ for every $j\leq n$. Fix $\tau>0$ to be determined later. By compactness of $M$, there exists a finite set $E_{n+1}$, containing the support of $\gamma_{k_j}$ for all $j=1\leq n$, and such that $M=[E_{n+1}]_\tau$.
		
		Since $f(E_{n+1})$ is finite as well, $\mathcal{F}(f(E_{n+1}))$ is finite-dimensional. Given that $(\widehat{f}(\gamma_k))_k$ is a normalized weakly null sequence, Lemma~\ref{lem:liminf} gives that \[\liminf_k \text{dist}(\widehat{f}(\gamma_k),\mathcal{F}(f(E_{n+1})))\geq\frac{1}{2}. \]
  \sloppy Therefore, combining with the Hahn-Banach separation theorem, we can find $k_{n+1}>k_n$ and $g_{n+1}\in
		\text{Lip}_0(N)$ with $\|g_{n+1}\|_L\leq 1$, $g_{n+1}(f(E_{n+1}))=\{0\}$ and $\langle g_{n+1},\widehat{f}(\gamma_{k_{n+1}})\rangle>\frac{1}{2}-\tau$. Since $(\gamma_k)_k$ is weakly null, we may assume as well that $\langle \varphi_n,\gamma_{k_{n+1}}\rangle <\tau$.

		The map $g_{n+1}\circ f\in \text{Lip}_0(M)$ is curve-flat, and satisfies 
		$$ (g_{n+1}\circ f) (u)=0,~\text{ for all }u\in E_{n+1},~\text{ and } \langle g_{n+1}\circ f, \gamma_{k_{n+1}}\rangle >\frac{1}{2}-\tau.$$
		Therefore, thanks to Lemma \ref{Lemma_Bate}, it can be approximated by a function $h_{n+1}\in \text{Lip}_0(M)$ with $\|h_{n+1}\|_L\leq 1$ and such that: 
		\begin{itemize}[leftmargin=*]
			\item $|h_{n+1} (u)|<2\tau$, for all $u\in M$,
			\item $\langle h_{n+1}, \gamma_{k_{n+1}}\rangle >\frac{1}{2}-2\tau$,
			\item $\langle h_{n+1},\gamma_{k_j}\rangle <\tau $ for all $j=1,\dots,n $, and
			\item $h_{n+1}$ is $\varepsilon_{n+1}$-flat at some radius $\theta_{n+1}$, which can be chosen so that $0<\theta_{n+1} \leq \theta_n$.
		\end{itemize}
		
		Now, define $\varphi_{n+1}=\varphi_n+h_{n+1}$. Let us check that $\varphi_{n+1}$ satisfies conditions (1), (2) and (3). First, condition (1) is clear since $\varphi_n$ is $(\sum_{i=1}^n\varepsilon_i)$-flat and $h_{n+1}$ is $\varepsilon_{n+1}$-flat. Next we estimate its Lipschitz constant. Let $x,y\in M$. If $d(x,y)<\theta_{n}$, then, since $\varphi_n$ is $\left(\sum_{i=1}^n\varepsilon_i\right)$-flat at radius $\theta_n$:
		\begin{align*}
			|\varphi_{n+1}(x)-\varphi_{n+1}(y)|&\leq |\varphi_n(x)-\varphi_n(y)|+|h_{n+1}(x)-h_{n+1}(y)|\\
			&\leq \Big(\sum_{i=1}^n\varepsilon_i+1\Big)d(x,y)	
		\end{align*}
		Otherwise, if $d(x,y) \geq \theta_{n}$ then:
		\begin{align*}
			|\varphi_{n+1}(x)-\varphi_{n+1}(y)|&\leq |\varphi_n(x)-\varphi_n(y)|+|h_{n+1}(x)|+|h_{n+1}(y)|\\
			&\leq \Big(1+\sum_{i=1}^n\varepsilon_i\Big)d(x,y)+4\frac{\tau}{\theta_n}d(x,y)\\
			&=\Big(1+\sum_{i=1}^n \varepsilon_i+4\frac{\tau}{\theta_n}\Big)d(x,y).
		\end{align*}
		Therefore, we conclude that $\|\varphi_{n+1}\|_L\leq\Big(1+\sum_{i=1}^n \varepsilon_i+4\frac{\tau}{\theta_n}\Big) $.
		
		Finally, let us examine the estimates in (3). If $j\leq n$:
		\begin{align*}
			\langle\varphi_{n+1},\gamma_{k_j}\rangle\geq\langle \varphi_{n},\gamma_{k_j}\rangle -|\langle h_{n+1},\gamma_{k_j}\rangle|>\frac{1}{2}-\sum_{i=1}^n\varepsilon_i-\tau.
		\end{align*}
		Simultaneously, we have:
		\begin{align*}
			\langle \varphi_{n+1},\gamma_{k_{n+1}}\rangle \geq \langle h_{n+1},\gamma_{k_{n+1}}\rangle -|\langle \varphi_n,\gamma_{k_{n+1}} \rangle|>\frac{1}{2}-3\tau.
		\end{align*}
		Choosing $\tau$ so that $4\frac{\tau}{\theta_n}<\varepsilon_{n+1}$ yields condition (2). In particular, this implies $3\tau<\varepsilon_{n+1}$, thereby proving (3) as well. The induction process is now complete.
		
		To conclude, passing to a subsequence if necessary, we may assume that $(\varphi_{n})_n$ converges to a Lipschitz map $\tilde{\varphi}\in \text{Lip}_0(M)$ in the weak$^*$ topology. For a fixed $j\in\mathbb{N}$, we have that $\langle \varphi_n,\gamma_{k_j}\rangle>\frac{1}{2}-\sum_{k=1}^\infty\varepsilon_k>\frac{1}{4}$ whenever $n\geq j$. Therefore, $\langle \tilde{\varphi},\gamma_{k_j}\rangle>\frac{1}{4}$ for all $j\in\mathbb{N}$. This is a contradiction with the fact that $(\gamma_{k_j})_j$ is weakly null.
	\end{proof}

	\subsection{Compact determination of Dunford-Pettis operators}
	
	Once we have established that (1) implies (2) in Theorem \ref{t:DP} for compact metric spaces, we proceed to extend this result to general metric spaces thanks to the following compact reduction result:

	\begin{proposition}
		\label{prop:DPisCompactlyDetermined}
		{Let $M, N$ be pointed metric spaces, with $M$ complete,} and let $f\in \Lip_0(M,N)$. The property ``$\widehat{f}$ is Dunford-Pettis'' is compactly determined, that is, $\widehat{f}$ is Dunford-Pettis if and only if for every compact $K \subset M$ such that $0\in K$, $\widehat{f\restricted_K}$ is Dunford-Pettis. 
	\end{proposition}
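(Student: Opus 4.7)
My plan is to prove the two implications of the equivalence separately.

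\textbf{Easy direction.} Suppose $\widehat{f}$ is Dunford-Pettis. For any compact $K \subset M$ with $0 \in K$, the canonical identification $\F(K) \equiv \F_M(K) \subset \F(M)$ yields an isometric inclusion $\iota_K : \F(K) \hookrightarrow \F(M)$, and by the universal extension property of Lipschitz-free spaces one has $\widehat{f\restricted_K} = \widehat{f} \circ \iota_K$. Since $\DP$ is an operator ideal, this composition is automatically Dunford-Pettis.

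\textbf{Hard direction: setup.} I argue by contraposition. Assume $\widehat{f}$ is not Dunford-Pettis and exhibit a compact $K \subset M$ with $0 \in K$ such that $\widehat{f\restricted_K}$ fails to be Dunford-Pettis. Pick a weakly null sequence $(\mu_n) \subset B_{\F(M)}$ with $c := \inf_n \|\widehat{f}(\mu_n)\|_{\F(N)} > 0$. By norm-density of finitely supported elements in $\F(M)$, I may replace each $\mu_n$ by a finitely supported approximant differing in norm by at most $2^{-n}$; such perturbations preserve weak nullity and, up to replacing $c$ by $c/2$, preserve the lower bound. So I may assume each $\mu_n$ has finite support.

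\textbf{Compact reduction and conclusion.} The crucial step is to show that, after passing to a subsequence, there exist a compact set $K \subset M$ with $0 \in K$ and elements $\nu_n \in \F(K)$ with $\|\mu_n - \nu_n\|_{\F(M)} \to 0$. Granting this, I note that $(\nu_n)$ is weakly null in $\F(K)$ because $\F(K) \hookrightarrow \F(M)$ isometrically and every $g \in \Lip_0(K)$ extends norm-preservingly to $\Lip_0(M)$ via McShane-Whitney; moreover, under the natural isometric embedding $\F(f(K)) \hookrightarrow \F(N)$, we have $\widehat{f\restricted_K}(\nu_n) = \widehat{f}(\nu_n)$. Therefore
\[
\|\widehat{f\restricted_K}(\nu_n)\|_{\F(N)} \;\geq\; \|\widehat{f}(\mu_n)\|_{\F(N)} - \|f\|_L\, \|\mu_n - \nu_n\|_{\F(M)} \;\geq\; c/2
\]
for all $n$ large enough, contradicting the Dunford-Pettis property of $\widehat{f\restricted_K}$.

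\textbf{Main obstacle.} The heart of the proof is the compact reduction step: producing a compact $K \subset M$ that absorbs, up to vanishing norm perturbation, the supports of a subsequence of $(\mu_n)$. I expect this to follow by adapting the support-analysis and gliding-hump extractions developed in~\cite{APP21}, using the completeness of $M$ to guarantee that accumulation points of the supports actually belong to $M$, and employing Lipschitz retractions (or truncations at a suitably chosen compact core) to transfer mass to $K$ at a controllable cost in the $\F(M)$-norm. Implementing this reduction carefully—so that the perturbed sequence remains bounded, stays weakly null, and has its supports inside a single compact set—is the principal technical task.
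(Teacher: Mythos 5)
Your easy direction is fine, and so is the ``granting this'' part of your argument (a perturbation with $\|\mu_n-\nu_n\|\to 0$ does preserve weak nullity, and $\widehat{f\restricted_K}(\nu_n)=\widehat{f}(\nu_n)$). The gap is exactly the step you flag as the main obstacle, and it is not a technicality: the compact reduction you need is \emph{false} as stated. What \cite[Theorem~2.3]{APP21} provides is, for each \emph{fixed} $\varepsilon>0$, a compact set $K=K_\varepsilon$ and a map $T$ with $\|\mu-T\mu\|\le\varepsilon$ uniformly on the sequence; the compact set depends on $\varepsilon$, and one cannot let $\varepsilon\to 0$ with a single compact set, even after passing to a subsequence. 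A concrete counterexample to your claimed reduction: take $M=\R$, so $\F(\R)\equiv L_1(\R)$, and let $\mu_n$ correspond to $h_n=\sum_{j\ge 1}2^{-j}r_n(\cdot-j)\indicator{[j,j+1]}$, where $(r_n)$ are the Rademacher functions on $[0,1]$. Then $\|h_n\|_1=1$, and $(h_n)$ is weakly null (each term $2^{-j}\int_0^1 g(x+j)r_n(x)\,dx\to 0$ for $g\in L_\infty$, with summable domination), yet for every compact $K\subset[-R,R]$ every element of $\F_\R(K)$ is, as an $L_1$ function, supported in $[-R,R]$, so $\dist(\mu_n,\F_\R(K))\ge \int_{|x|>R}|h_n|\ge 2^{-R-1}$ for all $n$. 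Hence no subsequence of this weakly null sequence can be absorbed, up to vanishing perturbation, into $\F(K)$ for a single compact $K$ (and with $f=\Id_\R$ this is precisely a sequence witnessing failure of the Dunford--Pettis property, so the difficulty cannot be dismissed as irrelevant to the contrapositive).

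The paper's proof avoids this by not arguing by contraposition with a single compact set. It assumes $\widehat{f\restricted_K}$ is Dunford--Pettis for \emph{every} compact $K\ni 0$, takes a weakly null $(\gamma_n)$, and, for each fixed $\varepsilon>0$, invokes \cite[Theorem~2.3]{APP21} to get an $\varepsilon$-dependent compact $K$ and a map $T$ into $\F(K)$ with $\|\gamma_n-T\gamma_n\|\le\varepsilon$; a point you would also need to address is that $T$ is only defined on $\mathrm{span}\{\gamma_n\}$ and is not a priori weak-to-weak continuous, so weak nullity of $(T\gamma_n)$ in $\F(K)$ is obtained from the uniform approximation of $T$ by bounded operators $T_k$ via an exchange-of-limits argument (\cite[Corollary~2.4]{APP21}). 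The DP property of $\widehat{f\restricted_K}$ then gives $\|\widehat{f}(T\gamma_n)\|\to 0$, and the triangle inequality yields $\limsup_n\|\widehat{f}(\gamma_n)\|\le(\|f\|_L+1)\varepsilon$; since $\varepsilon$ is arbitrary, $\|\widehat{f}(\gamma_n)\|\to 0$. To repair your proposal you would have to either switch to this fixed-$\varepsilon$ scheme, or replace the given witnessing sequence by a localized one whose image under $\widehat{f}$ stays bounded below in norm -- and controlling that lower bound after localization is essentially the whole difficulty, not a routine adaptation of \cite{APP21}.
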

	
	\begin{proof}
		One direction is clear: If $\widehat{f}$ is Dunford-Pettis, then so is  $\widehat{f\restricted_K}$ for every $K\subset M$ compact containing $0$. The converse will follow from the compact reduction principle described in \cite{APP21}. Suppose that $f : M \to N$ is a Lipschitz map such that $\widehat{f\restricted_K}$ is Dunford-Pettis for every $K\subset M$ compact containing $0$. Let $(\gamma_n)_n \subset \F(M)$ be weakly null. We will prove that $(\widehat{f}(\gamma_n))_n$ is norm-null. Let $W=\{\gamma_n : n \in \N\}$ and fix $\ep>0$. By \cite[Theorem~2.3]{APP21}, there exist a compact $K\subset M$ containing $0$ and a  linear mapping $T:\mathrm{span}(W) \to \F(K)$ such that
		\begin{itemize}
			\item $\norm{\mu-T\mu}\leq \varepsilon$ for all $\mu\in W$, and
			\item  there is a sequence of bounded linear operators $T_k:\Free(M)\rightarrow \Free(M)$ such that $T_k\rightarrow T$ uniformly on $W$.
		\end{itemize}
		A classical exchange of limits argument (see \cite[Corollary~2.4]{APP21} for more details) yields that the sequence $(T(\gamma_n))_n$ is weakly null in $\F(K)$. Since $\widehat{f\restricted_K}$ is Dunford-Pettis, $\|\widehat{f}(T\gamma_n)\| \to 0$. So there exists $n_0 \in \N$ such that for every $n \geq n_0$, $\|\widehat{f}(T\gamma_n)\| \leq \ep$. By the triangle inequality we obtain: 
		$$\forall n \geq n_0, \quad \|\widehat{f}(\gamma_n)\| \leq \|\widehat{f} \, \| \|\gamma_n - T\gamma_n\| + \| \widehat{f}(T\gamma_n) \| \leq (\|f\|_L+1)\ep. $$
		Since $\ep>0$ was arbitrary, this proves that $(\widehat{f}(\gamma_n))_n$ is norm-null, and therefore $\widehat{f}$ is Dunford-Pettis.
	\end{proof}
	
	We now have all the ingredients to prove the main result of the section.
	\subsection{Proof of Theorem \ref{t:DP}}
	
	\begin{proof}[Proof of Theorem \ref{t:DP}]
		The implication $(1) \implies (2)$ follows from Propositions \ref{prop:CFimpliesDPCompactCase} and \ref{prop:DPisCompactlyDetermined}, while $(2) \implies (3)$ is direct since the unit sphere of $L_1$ contains a weakly null sequence. 
		
		To show that $(3) \implies (1)$, we proceed by contrapositive. Thus, suppose that $f$ is not curve-flat. By Proposition~\ref{prop:curveflat} there exists $K\subset \R$ compact, $\gamma : K \to M$ bi-Lipschitz such that $\H^1(f\circ \gamma(K))> 0$. By Kirchheim's lemma, there exists $A$ compact, $A \subset K$ such that $\lambda(A)>0$ and $f\circ \gamma \restricted_A$ is bi-Lipschitz. Now, thanks to Godard's work \cite{Godard}, $\F(A)$ is isomorphic to $L_1$. As $\gamma\restricted_A$ and $f\circ \gamma \restricted_A$ are bi-Lipschitz, $\widehat{\gamma\restricted_A}$ and $\widehat{f\circ \gamma \restricted_A}$ are isomorphisms from $\F(A)$ onto $\F(\gamma(A))$ and $\F(f\circ \gamma (A))$, respectively. Since $\widehat{f \circ \gamma}=\widehat{f}\circ \widehat{\gamma}$, it follows that $\widehat{f}\restricted_{\F(\gamma(A))}$ is an isomorphism from $\F(\gamma(A))$ onto $\F(f\circ \gamma (A))$. This proves that $\hat{f}$ fixes a copy of $L_1$.
	\end{proof}

	\section{Radon-Nikod\'ym Lipschitz operators}\label{section:RN}
 In this section, we prove the second ``half'' of Theorem~\ref{t:flagship}.

	\begin{thm}\label{t:RN}
		{Let $M, N$ be pointed metric spaces, with $M$ complete,} and let $f \in \Lip_0(M,N)$. Then the following properties are equivalent:
		\begin{enumerate}
			\item $f$ is curve-flat,
			\item $\widehat{f}$ is Radon-Nikod\'ym,
			\item $\widehat{f}$ does not fix any copy of $L_1$.
		\end{enumerate}
	\end{thm}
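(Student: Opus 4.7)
Since Theorem~\ref{t:DP} already establishes $(1)\iff(3)$, the task reduces to inserting $(2)$ into the equivalence, which I plan to do by proving $(1)\Longrightarrow(2)\Longrightarrow(3)$.

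The implication $(2)\Longrightarrow(3)$ is the routine direction. Suppose $\widehat{f}$ is RN while fixing a copy of $L_1$, witnessed by an isomorphic embedding $J\colon L_1\to \Free(M)$ with $\widehat{f}\circ J$ also isomorphic onto its image. Because $L_1$ fails the RNP, one produces a bounded $L_1$-valued martingale $(N_n)\subset L_1(L_1)$ failing pointwise a.e.\ convergence (for instance $N_n(\omega)=2^n\chi_{I_n(\omega)}$ along the dyadic filtration of $[0,1]$). The pushforward $(J\circ N_n)$ is then a uniformly integrable $\Free(M)$-valued martingale, and the image $((\widehat{f}\circ J)\circ N_n)$ cannot converge pointwise a.e.\ in $\Free(N)$ because $\widehat{f}\circ J$ is an isomorphism onto its image; this contradicts Lemma~\ref{Lemma:RN via unif integrable martingales}.

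The substance is $(1)\Longrightarrow(2)$, and I plan to mirror the two-step strategy of Section~\ref{section:DP}: dispatch the compact case first, then reduce the general case to it. For $M$ compact and $f\in\CF_0(M,N)$, Lemma~\ref{Lemma:RN via unif integrable martingales} reduces the problem to showing that for every uniformly integrable $\Free(M)$-valued martingale $(M_n)$, the sequence $(\widehat{f}(M_n))$ is Cauchy in $L_1(\Free(N))$. Arguing by contradiction, extract a subsequence whose differences $D_k=M_{n_{k+1}}-M_{n_k}$ satisfy $\|\widehat{f}(D_k)\|_{L_1(\Free(N))}\geq \ep>0$. Uniform integrability of the martingale forces the classical weak limit in $L_1(\Free(M)^{**})$ to exist, so $(D_k)\to 0$ in the topology $\sigma(L_1(\Free(M)),S)$ (and hence $(\widehat{f}(D_k))\to 0$ in the analogous topology on $L_1(\Free(N))$), where $S$ is the space of simple $\Lip_0$-valued functions. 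I would then run a gliding-hump construction in the spirit of Proposition~\ref{prop:CFimpliesDPCompactCase}: at each step invoke Bate's Lemma~\ref{Lemma_Bate} to produce a $\tau_n$-flat Lipschitz function on $N$ whose composition with $f$ approximates a given simple norming element for $\widehat{f}(D_{k_n})$ while vanishing on a growing finite $\varepsilon$-net of $M$. Assembled together, these yield a single simple $\Lip_0(N)$-valued function pairing positively with infinitely many $\widehat{f}(D_k)$, contradicting the $\sigma(L_1,S)$-nullity (equivalently, contradicting Corollary~\ref{Cor:liminf}).

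For the reduction from complete $M$ to the compact case, I plan to adapt Proposition~\ref{prop:DPisCompactlyDetermined}. Given a uniformly integrable $(M_n)\subset L_1(\Free(M))$ witnessing the failure of RN for $\widehat{f}$, approximate each $M_n$ by simple functions with finitely supported $\Free(M)$-values; the union $W$ of these values is a countable subset of $\Free(M)$, so \cite[Theorem~2.3]{APP21} yields a compact $K\ni 0$, a linear map $T\colon\linsp(W)\to \Free(K)$, and operators $T_k\colon\Free(M)\to\Free(M)$ converging to $T$ uniformly on $W$. The main obstacle I foresee is that martingales carry filtration structure that weakly null sequences do not, so $(T\circ M_n)$ need not itself be a martingale in $\Free(K)$; I expect to handle this by working at the level of Cauchy defects $\|\widehat{f}(M_{n+1}-M_n)\|_{L_1}$ rather than pointwise convergence, then using conditional expectations to recover a genuine uniformly integrable martingale on $\Free(K)$ whose image under $\widehat{f\restricted_K}$ still fails to be $L_1$-Cauchy, thus contradicting the compact case.
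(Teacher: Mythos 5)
Your overall architecture is the paper's: get $(2)\Rightarrow(3)$ from a non-convergent $L_1$-valued martingale, quote Theorem~\ref{t:DP} for $(3)\Rightarrow(1)$, and prove $(1)\Rightarrow(2)$ by a compact case plus a compact-determination step. But the two technical cores of $(1)\Rightarrow(2)$ have genuine gaps. In the compact case, your endgame is to assemble the gliding-hump functions into ``a single simple $\Lip_0(N)$-valued function pairing positively with infinitely many $\widehat f(D_k)$'' and contradict $\sigma(L_1,S)$-nullity. The partial sums $\varphi_i=\sum_{n\le i}h_n$ are simple and each pairs well with only finitely many differences; the limit object that pairs with all of them is no longer a simple function, so it lies outside the test class $S$ and the $\sigma(L_1,S)$-nullity (Lemma~\ref{l:WeakConvergenceOfMartingales}, Corollary~\ref{Cor:liminf}) says nothing about it. This cannot be repaired by upgrading the nullity to the full weak topology of $L_1(\F(M))$: differences of uniformly integrable non-convergent martingales are in general \emph{not} weakly null in $L_1(X)$ (for the dyadic martingale $M_n(\omega)=2^n\chi_{I_n(\omega)}$ in $L_1(L_1)$, pairing with $g(\omega)(t)=(-1)^{k(\omega,t)}$, where $k(\omega,t)$ is the first dyadic level separating $t$ from $\omega$, gives $|\langle g,D_n\rangle|=2/3$ for all $n$). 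This is exactly why the paper's Proposition~\ref{prop:CFimpliesRNCompactCase} uses a different contradiction: after replacing $(M_n)$ by a finitely-adapted, finitely-supported quasi-martingale, the functions $h_n$ are built \emph{adapted} to the filtration, and the quasi-martingale estimates $\|\E^{\A_{k_n}}(\cl M_{k_i})-\cl M_{k_n}\|\le 2^{-k_n}$ let one telescope $\langle\sum_{n\le i}h_n,\cl D_{k_i}\rangle\ge i/8-2$, which contradicts the $L_1$-\emph{boundedness} of the martingale, not any weak-type nullity. Your sketch is missing this mechanism, and without it the construction does not close.

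The non-compact reduction also does not go through as written. You apply \cite[Theorem~2.3]{APP21} to ``the union $W$ of the values'' of the approximating simple functions on the grounds that it is countable; but that theorem requires a compactness-type hypothesis on $W$ (in the Dunford--Pettis proof it is applied to a weakly null, hence weakly precompact, sequence), and the set of values of a uniformly integrable $\F(M)$-valued martingale need not be weakly precompact, while countability alone is not a hypothesis of that theorem. The paper instead invokes the integrated version of the compact reduction, \cite[Proposition~4.2]{AGPP}, which applies to sequences in $L_1(\F(M))$ with the \emph{mean Kalton property} (supplied by \cite[Proposition~4.3 and Remark~4.4]{AGPP}), and then the estimates \eqref{eq:KP_1}--\eqref{eq:KP_2} show, as in \cite[Corollary~4.5]{AGPP}, that $(T(M_n))_n$ \emph{is} a uniformly integrable martingale in $L_1(\lipfree K)$ whose image under $\widehat{f\restricted_K}$ is not $L_1$-Cauchy. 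You correctly flag that $T\circ M_n$ need not a priori be a martingale, but ``use conditional expectations to recover a genuine uniformly integrable martingale'' is precisely the step that needs the quantitative machinery above; as stated it is a placeholder, not an argument. (A smaller point: your justification of $\sigma(L_1,S)$-nullity via ``the classical weak limit in $L_1(\Free(M)^{**})$'' is not correct as stated; the right route is Lemma~\ref{l:WeakConvergenceOfMartingales}, which rests on scalar quasi-martingale convergence.)
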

	Note that the proof of (2) $\implies$ (3) is trivial as there exists a $L_1$-bounded martingale in $L_1(L_1)$ which does not converge almost everywhere. On the other hand, (3) $\implies$ (1) is shown in Theorem \ref{t:DP} already. Hence, it only remains to deal with the implication (1) $\implies$ (2), which will be the focus of the present section. 
	
	The strategy to show (1) $\implies$ (2) is the same as the one we followed in Section~\ref{section:DP}: first we show that it holds in compact metric spaces, and then we prove that the property ``$\widehat{f}$ is Radon-Nikod\'ym" is compactly determined. In both results we will be dealing with Banach space valued martingales and quasi-martingales. 
 
Recall that a stochastic process $(S_n)_n$ in $L_1 (\mathcal{A}, X)$ adapted to a filtration $(\mathcal{A}_n)$ is said to be a \emph{quasi-martingale} if 
\[
\sum_{n=1}^\infty \| \mathbb{E}^{\mathcal{A}_{n-1}} ( S_n - S_{n-1}) \| < \infty. 
\]
We refer to \cite[Remark~2.16]{Pisier} for more details about quasi-martingales.

\begin{lemma}\label{l:WeakConvergenceOfMartingales}
Let $X$ be a Banach space, $(M_n)$ be a bounded and uniformly integrable quasi-martinagle in $L_1 (X)$ and denote $D_n := M_n - M_{n-1}$ (with $M_0 :=0$). Then $D_n \to 0$ in $\sigma(L_1(X),S)$ where $S$ denotes the space of simple functions in $L_\infty(\Omega, X^*) \subseteq L_1 (\Omega, X)^*$.
\end{lemma}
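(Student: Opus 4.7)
The plan is to reduce the weak convergence to norm convergence of vector-valued integrals of the form $\int_A D_n \in X$, and then combine the quasi-martingale condition with uniform integrability. Every element $\varphi \in S$ has the form $\varphi = \sum_{k=1}^p x_k^* \mathbf{1}_{A_k}$ with $x_k^* \in X^*$ and disjoint $A_k \in \mathcal{A}$, so
\[
\int_\Omega \langle D_n, \varphi \rangle \, d\mathbb{P} \;=\; \sum_{k=1}^p x_k^*\!\left(\int_{A_k} D_n \, d\mathbb{P}\right),
\]
which reduces the statement to showing that for every $A \in \mathcal{A}$, $\int_A D_n \, d\mathbb{P} \to 0$ in $X$-norm. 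Note that $(D_n)$ is bounded in $L_1(X)$ because $(M_n)$ is.

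The main subtlety is that a generic $A \in \mathcal{A}$ need not belong to $\mathcal{A}_\infty := \sigma(\bigcup_n \mathcal{A}_n)$. To bypass this, I use that each $D_n$ is $\mathcal{A}_\infty$-measurable, so applying the tower property functional-by-functional yields
\[
\int_A D_n \, d\mathbb{P} \;=\; \int_\Omega \mathbb{E}[\mathbf{1}_A \mid \mathcal{A}_\infty] \, D_n \, d\mathbb{P}.
\]
The scalar function $\mathbb{E}[\mathbf{1}_A \mid \mathcal{A}_\infty]$ is $[0,1]$-valued and $\mathcal{A}_\infty$-measurable, hence approximable in $L^\infty$-norm by $\mathcal{A}_\infty$-simple functions $\sum_j c_j \mathbf{1}_{B_j}$ with $B_j \in \mathcal{A}_\infty$. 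Since $(\|D_n\|_{L_1(X)})$ is bounded, this further reduces the claim to showing $\int_B D_n \, d\mathbb{P} \to 0$ in norm for every $B \in \mathcal{A}_\infty$.

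For this main step I fix $B \in \mathcal{A}_\infty$ and $\varepsilon > 0$. Using density of $\bigcup_n \mathcal{A}_n$ in $\mathcal{A}_\infty$ and the uniform integrability of $(\|M_n\|_X)$, I choose $m \in \mathbb{N}$ and $B_m \in \mathcal{A}_m$ with $\mathbb{P}(B \triangle B_m)$ small enough that $\sup_k \int_{B \triangle B_m} \|M_k\|_X \, d\mathbb{P} < \varepsilon/4$. For $n > m$ one has $B_m \in \mathcal{A}_{n-1}$, so the tower property and the $\mathcal{A}_{n-1}$-measurability of $M_{n-1}$ give
\[
\int_{B_m} D_n \, d\mathbb{P} \;=\; \int_{B_m} \mathbb{E}^{\mathcal{A}_{n-1}}(M_n - M_{n-1}) \, d\mathbb{P},
\]
whose $X$-norm is at most $\|\mathbb{E}^{\mathcal{A}_{n-1}}(M_n - M_{n-1})\|_{L_1(X)}$. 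The quasi-martingale hypothesis ensures that this series is summable, so the terms tend to $0$. Combined with the estimate $\|\int_B D_n - \int_{B_m} D_n\|_X \leq 2 \sup_k \int_{B \triangle B_m} \|M_k\|_X \, d\mathbb{P} < \varepsilon/2$, this yields $\|\int_B D_n\|_X < \varepsilon$ for all sufficiently large $n$.

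The main obstacle is purely bookkeeping: the test $\sigma$-algebra $\mathcal{A}$ may be strictly larger than the natural $\mathcal{A}_\infty$ of the process, which forces the conditional-expectation reduction; once that is done, the quasi-martingale condition plays the role of martingale orthogonality (up to an $\ell^1$-summable error) and uniform integrability controls the boundary $B \triangle B_m$ uniformly in $n$, giving the conclusion.
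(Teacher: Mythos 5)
Your proof is correct, and it takes a genuinely different route from the paper. The paper argues functional-by-functional: for a fixed simple $g$, it picks a basis of the finite-dimensional span of the values of $g$, notes that each $(x_i^*\circ M_n)_n$ is a bounded, uniformly integrable scalar quasi-martingale, and invokes the classical $L_1$-convergence theorem for such processes (via \cite[Remark~2.16]{Pisier}) to get $x_i^*\circ D_n\to 0$ in $L_1$, after which a pointwise estimate finishes the proof. You instead prove the formally stronger statement that the Bochner integrals $\int_A D_n\,d\mathbb{P}$ tend to $0$ in $X$-norm for every $A\in\mathcal{A}$, by hand: conditioning $\mathbf{1}_A$ on $\mathcal{A}_\infty$ to reduce to $A\in\mathcal{A}_\infty$ (a measurability point the paper's reduction never has to confront, since the scalar convergence theorem handles arbitrary test functionals at once), approximating such $A$ by sets in $\bigcup_n\mathcal{A}_n$ with the error controlled uniformly in $n$ by uniform integrability, and then using the summability $\sum_n\|\mathbb{E}^{\mathcal{A}_{n-1}}(M_n-M_{n-1})\|_{L_1(X)}<\infty$ directly on the approximating set. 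What each approach buys: the paper's proof is shorter because it outsources the analytic core to a cited convergence theorem, while yours is self-contained and elementary, using only the definition of a quasi-martingale plus uniform integrability, and it yields the slightly stronger conclusion of setwise norm convergence of the increment measures, which is more than the $\sigma(L_1(X),S)$-convergence actually needed for the lemma.
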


\begin{proof} 
    Let $g \in S$ be fixed. Note that the set $\{g(\omega) : \omega \in \Omega\}$ is a finite set; thus generates a finite dimensional subspace $E$ of $X^*$. Let us fix a basis $\{x_1^*,\ldots,x_k^*\}$ of $E$. Due to finite dimensionality, there exists $C>0$ such that $\max_i |\lambda_i| \leq C \|x^*\|$ for every $x^* = \sum_{i=1}^k \lambda_i x_i^* \in E$. If $\omega \in \Omega$ then we can write $g(\omega) = \sum_{i=1}^k \lambda_i(\omega)x_i^*$ for some $\lambda_i (\omega) \in \mathbb{R}$, $i=1,\ldots, k$. Notice that $(x_i^* \circ M_n)_n$ is a scalar valued, bounded, and uniformly integrable quasi-martingale for each $i=1,\ldots, k$. Therefore it converges in $L_1$ (see \cite[Remark 2.16]{Pisier}). This readily implies that each $(x_i^* \circ D_n)_n$ goes to $0$  in $L_1$ as $n\rightarrow \infty$. We conclude with the following estimates:
	\begin{align*}
	| \<g , D_n \>|  &\leq  \int_\Omega | \< g(w) , D_n(\omega) \>| \,  d\PP(\omega) \\
	&\leq \int_\Omega  \sum_{i=1}^{k} |\lambda_i(\omega)| \cdot | x_i^*\circ D_n(\omega) | \,  d\PP(\omega) \ \\
	&\leq C \|g\|_\infty \sum_{i=1}^{k} \int_\Omega | x_i^*\circ D_n(\omega) | \,  d\PP(\omega) \rightarrow 0. \qedhere
	\end{align*}
\end{proof}
	
	\subsection{Proof of the compact case of Theorem~\ref{t:RN}}
	
    Here we will deal with the compact case. Up to the use of martingales, the proof will largely resemble that of Proposition \ref{prop:CFimpliesDPCompactCase} which established the implication $``f\in \CF_0" \implies ``\hat{f} \in \DP"$.

	\begin{prop}
		\label{prop:CFimpliesRNCompactCase}
		Let $M$ and $N$ be metric spaces with $M$ compact, and let $f \in \Lip_0(M,N)$. If $f$ is curve-flat, then $\widehat{f}\colon \mathcal{F}(M)\rightarrow \mathcal{F}(N)$ is Radon-Nikod\'ym.
	\end{prop}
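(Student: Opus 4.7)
The plan is to mirror the proof of Proposition~\ref{prop:CFimpliesDPCompactCase}, exchanging weakly null sequences in $\Free(M)$ for the differences of a uniformly integrable martingale in $L_1(\Free(M))$. The bridge is Lemma~\ref{Lemma:RN via unif integrable martingales}: $\widehat f$ is Radon--Nikod\'ym iff $\widehat f(M_n)$ converges in $L_1(\Free(N))$ for every uniformly integrable $L_1(\Free(M))$-bounded martingale $(M_n)$. I would argue by contradiction: if $\widehat f$ is not RN, pick such an $(M_n)$ with $(\widehat f(M_n))$ not $L_1$-Cauchy, extract indices $n_k<m_k$ and $c>0$ with $E_k:=M_{m_k}-M_{n_k}$ satisfying $\|\widehat f(E_k)\|_{L_1}\geq c$, and by density of simple functions arrange that each $E_k$ is itself simple with finitely supported measure values. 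Lemma~\ref{l:WeakConvergenceOfMartingales}, applied to the scalar quasi-martingales $\omega\mapsto\langle g\circ f, M_n(\omega)\rangle$ for each simple $g\in L_\infty(\mathrm{Lip}_0(N))$, upgrades this to $\widehat f(E_k)\to 0$ in the $\sigma(L_1(\Free(N)),S)$-topology.

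With this setup I would run a gliding hump directly patterned on Proposition~\ref{prop:CFimpliesDPCompactCase}. Fix $(\varepsilon_n)\subset(0,1)$ with $\sum\varepsilon_n<c/8$ and build inductively a subsequence $(E_{k_n})$ together with simple functions $\varphi_n=h_1+\cdots+h_n\colon\Omega\to\mathrm{Lip}_0(M)$ such that each value of $\varphi_n$ is $\bigl(\sum_{i\leq n}\varepsilon_i\bigr)$-flat at a common positive radius $\theta_n$, $\|\varphi_n\|_\infty\leq 1+\sum_{i\leq n}\varepsilon_i$, and $\int\langle\varphi_n,E_{k_j}\rangle\geq c/2-\sum_{i\leq n}\varepsilon_i$ for $j\leq n$. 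The inductive step relies on three ingredients. Corollary~\ref{Cor:liminf} applied to $(\widehat f(E_k))$ together with the finite-dimensional subspace $\mathrm{span}\{\widehat f(E_{k_j}):j\leq n\}$ supplies $k_{n+1}$ at $L_1$-distance at least $c/3$ from that span. Hahn--Banach separation in $L_1(\Free(N))^*$ followed by conditional expectation onto the finite $\sigma$-algebra $\mathcal{Q}_{n+1}$ generated by $E_{k_1},\ldots,E_{k_{n+1}}$ produces a simple separator $g_{n+1}$ of norm $\leq 1$ that annihilates $\widehat f(E_{k_j})$ for $j\leq n$ and pairs with $\widehat f(E_{k_{n+1}})$ at $\geq c/3$. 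Finally, Lemma~\ref{Lemma_Bate} applied to each of the finitely many values of $g_{n+1}\circ f\in B_{\CF_0(M)}$ delivers a fiberwise-flat simple $h_{n+1}$ whose pairings with the previously selected $E_{k_j}$ mimic those of $g_{n+1}\circ f$ up to arbitrarily small error (each $E_{k_j}$ has finite support, so Lemma~\ref{Lemma_Bate} is effectively finite-dimensional on the relevant piece). The flatness control then permits the same Lipschitz-norm estimate as in the DP proof to yield the required $\|\varphi_{n+1}\|_\infty$ bound.

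The hard part will be closing the argument. A weak$^*$ cluster point $\widetilde\varphi$ of $(\varphi_n)$ in $L_1(\Free(M))^*=L_\infty(\Omega,w^*;\mathrm{Lip}_0(M))$ inherits $\int\langle\widetilde\varphi,E_{k_j}\rangle\geq c/4$ for every $j$; but $E_{k_j}\to 0$ only in $\sigma(L_1,S)$, that is, against \emph{simple} testers, while $\widetilde\varphi$ a priori lives in the larger space of $w^*$-measurable bounded $\mathrm{Lip}_0(M)$-valued functions, which is not well approximated by simple functions when $\mathrm{Lip}_0(M)$ is non-separable. My expected route to bridge this gap is to enforce in the construction that each $h_i$ be measurable with respect to $\mathcal Q_{k_i}$, so that every $\varphi_n$ lives in the finite-dimensional simple class attached to $\mathcal{R}_n:=\bigvee_{i\leq n}\mathcal{Q}_{k_i}$; then the identity $\int\langle\widetilde\varphi,E_{k_j}\rangle=\int\langle\E^{\mathcal Q_{k_j}}(\widetilde\varphi),E_{k_j}\rangle$, with $\E^{\mathcal Q_{k_j}}(\widetilde\varphi)$ simple of $L_\infty$-norm at most $2$, combined with the $\sigma(L_1,S)$-decay of $(E_{k_j})$ via a diagonal or telescoping argument on the refining partitions $(\mathcal{R}_n)$, should produce the desired contradiction.
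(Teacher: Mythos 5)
Your overall skeleton matches the paper's strategy (martingale criterion from Lemma~\ref{Lemma:RN via unif integrable martingales}, reduction to simple/finitely supported differences, $\sigma(L_1,S)$-nullity via Lemma~\ref{l:WeakConvergenceOfMartingales}, Corollary~\ref{Cor:liminf} plus Hahn--Banach, and a gliding hump built from Lemma~\ref{Lemma_Bate}), but the proposal has two genuine gaps, and the second is exactly the point where the paper's proof does something you have not supplied. First, in the inductive step your separator $g_{n+1}$ only annihilates the finitely many vectors $\widehat{f}(E_{k_j})$, $j\le n$. In the paper, $g_{n+1}$ is chosen to vanish on $L_1(\A_{k_n+1},\F(f(E_{n+1})))$ where $E_{n+1}$ is a finite $\tau$-net of the compact space $M$ containing all supports of the previously used values $\cl{M}_{k_i}(\omega)$; since $g_{n+1}(\omega)\circ f$ is then $\le\tau$ in sup norm on all of $M$, Lemma~\ref{Lemma_Bate} yields $|h_{n+1}(\omega)(u)|<2\tau$ for every $u\in M$, and it is precisely this uniform smallness that controls the Lipschitz norm of $\varphi_{n+1}=\varphi_n+h_{n+1}$ when $d(x,y)\ge\theta_n$ (the $4\tau/\theta_n$ term). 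Flatness only handles $d(x,y)<\theta_n$; with your separator, the large-distance estimate degenerates to $\|\varphi_{n+1}\|_L\le\|\varphi_n\|_L+1$, and the boundedness of $(\varphi_n)$, which your whole argument needs, is lost.

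Second, and more seriously, your closing step does not work, and you flag it yourself. The conditional-expectation bridge $\int\langle\widetilde\varphi,E_{k_j}\rangle=\int\langle\E^{\mathcal Q_{k_j}}(\widetilde\varphi),E_{k_j}\rangle$ cannot be combined with the $\sigma(L_1,S)$-decay of $(E_{k_j})$, because that decay is only against a \emph{fixed} simple function, whereas your testers $\E^{\mathcal Q_{k_j}}(\widetilde\varphi)$ vary with $j$. Indeed, the very sequence $(\varphi_j)$ you construct is uniformly bounded, simple, adapted to the refining algebras, and pairs with $E_{k_j}$ above a fixed positive constant, so no diagonal or telescoping trick on the partitions alone can produce a contradiction; any valid closing must use more than $\sigma(L_1,S)$-nullity. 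The paper never takes a weak$^*$ cluster point at all. Instead, the approximation step is arranged so that $(\cl{M}_n)$ is a quasi-martingale with summable errors $\|\E^{\A_n}(\cl{M}_i)-\cl{M}_n\|_{L_1}\le 2^{-n}$, and each hump $h_n$ is taken in $L_\infty(\A_{k_n+1},\Lip_0(M))$, i.e. adapted to the \emph{filtration of the martingale}. Then a telescoping computation with conditional expectations shows $\E\langle\sum_{n\le i}h_n,\cl{D}_{k_i}\rangle\ge \frac{i}{8}-2$, while the left-hand side is at most $C\|\cl{D}_{k_i}\|_{L_1}\le 2C\sup_n\|\cl{M}_n\|_{L_1}$; letting $i\to\infty$ contradicts $L_1$-boundedness of the martingale. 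Your proposal contains neither the quasi-martingale error control nor this unboundedness mechanism, so as written the argument does not close.
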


	\begin{proof}
  We assume that there is a uniformly integrable martingale $(M_n) \subset L_1(\Free(M))$ such that $(\widehat{f} \circ M_n)_n$ does not converge in $L_1(\Free(N))$.

		Let $D_{n}=M_{n+1}-M_n$. Up to passing to a subsequence and multiplying by a positive constant, we may assume that $\|\widehat{f} \circ D_n\| \geq 1$ for every $n \in \N$.
		We will employ a gliding hump argument to show that $(M_n)$ is not bounded. 
		
	First, using a standard approximation technique, we may assume that $(M_n)_{n=0}^\infty$ is adapted to a filtration $(\A_n)_{n=0}^\infty$ where each $\A_n$ is finite (see the proof of \cite[Proposition~4.3]{AGPP} for more details). Next, by density of the finitely supported elements in $\lipfree{M}$, we may replace $(M_n)_n$ with a stochastic process $(\cl{M}_n)_n$ adapted to $(\A_n)_{n=0}^\infty$ satisfying
		\begin{itemize}[itemsep=3pt]
            \item {$\lim_{n \to \infty}\|\cl{M}_n-M_n\|_{L_1}=0$}
			\item $\cl{M}_n$ is constant on each atom of $\A_n$ and not just essentially constant,
			\item $\cl{M}_n(\omega)$ is finitely supported for every $n \in \NN$ and $\omega \in \Omega$,
			\item $(\cl{M}_n)_n$ is $L_1(\lipfree{M})$-bounded,
			\item $\|\widehat{f} \circ \cl{D}_n \| \geq \frac{1}{2}$, where $\cl{D}_n:=\cl{M}_{n+1} - \cl{M}_n$ for every $n \in \N$,
			\item for every $n \leq i$, $\|\E^{\A_n}(\cl{M}_{i})-\cl{M}_n\|_{L_1} \leq 2^{-n}$. 
        \end{itemize}
		
		Notice that $(\cl{M}_n)_n$ is still uniformly integrable and the last condition implies that $(\cl{M_n})_{n=0}^\infty$ is a quasi-martingale. 
		By Lemma \ref{l:WeakConvergenceOfMartingales}, we have that the sequence $(\cl{D_n})_n$ is $\sigma(L_1(\F(M)) , S)$-null, where $S$ denotes the space of simple functions in $L_\infty(\Omega, \F(M)^*) \subseteq L_1 (\Omega, \F(M))^*$.

		\smallskip
		
		Now choose a decreasing sequence of positive numbers $(\varepsilon_n)_n$ in such a way that $\sum_{n} \varepsilon_n<\frac{1}{8}$. We will construct, by induction, a bounded sequence $(\varphi_n)_n$ and a subsequence $(\overline{D}_{k_n})_n$ such that:
		
		\begin{enumerate}
			\item $\varphi_n \in L_1(\mathcal{A}_{k_n+1} , \F(M))^*  = L_\infty(\mathcal{A}_{k_n+1} , \Lip_0(M))$,
			\item For every $\omega \in \Omega$, $\varphi_n(\omega)$ is $\left(\sum_{i=1}^n\varepsilon_i\right)$-flat at some positive radius $0<\theta_n<1$,
			\item $\|\varphi_n\| \leq 1+\sum_{i=1}^n \varepsilon_i$, and
			\item $\langle \varphi_n,\cl{D}_{k_i}\rangle \geq \frac{1}{4}-\sum_{i=1}^n\varepsilon_i$ for all $i=1,\dots,n$.
		\end{enumerate}
		
		For the initial step, let $\cl{D}_{k_1}=\cl{D}_1$. Since $\|\widehat{f} \circ \cl{D}_1 \|\geq \frac{1}{2}$, there exists a map $g_1\in L_\infty(\A_2 , \Lip_0(N))$ with $\|g_1\|=1$ such that $\langle g_1,\widehat{f} \circ \cl{D}_1\rangle \geq \frac{1}{2}$. Notice that 
		$$ \langle g_1, \widehat{f} \circ \cl{D}_1 \rangle = \int_\Omega \langle g_1(\omega),\widehat{f}(\cl{D}_1(\omega))\rangle d\PP = \int_\Omega \langle g_1(\omega)\circ f,\cl{D}_1(\omega)\rangle d\PP.  $$
		For each $\omega \in \Omega$, the function $g_1(\omega)\circ f$ is curve-flat. So by Lemma \ref{Lemma_Bate}, there exists a simple function $\varphi_1 \in L_\infty(\A_2 , \Lip_0(M))$ such that $\|\varphi_1\| \leq 1$, $\langle \varphi_1,\cl{D}_1\rangle \geq \frac{1}{2}-\varepsilon_1$ and finally $\varphi_1(\omega)$ is $\varepsilon_1$-flat at some radius $\theta_1\in(0,1)$ for every $\omega \in \Omega$. 
  
		Let us fix $n\in\mathbb{N}$ and suppose we have constructed $\cl{D}_{k_i}$ and $\varphi_i$ for every $i\leq n$. Fix $0< \tau < \frac{1}{16}$ to be determined later. By compactness, there exists a finite set $E_{n+1}$ containing $$\bigcup_{i=1}^n \bigcup_{\omega \in \Omega}\left(\supp(\cl{M}_{k_i}(\omega))\cup \supp(\cl{M}_{k_i+1}(\omega))\right)$$
  and such that $M=[E_{n+1}]_\tau$.

		Since $f(E_{n+1})$ is finite as well, $\mathcal{F}(f(E_{n+1}))$ is finite-dimensional. 
		Notice that for each $g \in L_1 (\A_{k_n+1} , \mathcal{F}( N ) )^* = L_\infty (\mathcal{A}_{k_n +1}, \Lip_0 (N))$, we have
			\[
			\langle  g,  \widehat{f}  \circ \cl{D}_k  \rangle = \int_{\Omega}  \langle g (\omega) \circ f, \cl{D}_{k} (\omega)\rangle d\PP \rightarrow 0,
			\]
			where the convergence is guaranteed thanks to Lemma
			\ref{l:WeakConvergenceOfMartingales}. Thus, applying Corollary \ref{Cor:liminf} to $(\widehat{f} \circ \cl{D}_k)_k$, $\mathcal{A}_{k_n +1}$ and $\mathcal{F}(f(E_{n+1}))$, we observe that 
			\[
			\liminf_k \text{dist}\big(\widehat{f} \circ \cl{D}_k,L_1 (\A_{k_n+1} , \mathcal{F}(f(E_{n+1}))){\big)}\geq\frac{\inf_k \|\widehat{f}  \circ \cl{D}_k \| }{2} \geq \frac{1}{4}. 
			\] 
			So there exists $k_{n+1} > k_n$ such that $\text{dist}\big(\widehat{f} \circ \cl{D}_{k_{n+1}} ,L_1(\A_{k_n+1} , \mathcal{F}(f(E_{n+1}))){\big)}>\frac{1}{4} - \tau$. Since $\widehat{f} \circ \cl{D}_{k_{n+1}} \in L_1(\A_{k_{n+1}+1}, \F(N))$, by Hahn Banach separation theorem, there exists $g_{n+1}\in
			L_\infty(\A_{k_{n+1}+1} , \text{Lip}_0(N))$ with $\|g_{n+1}\|\leq 1$ such that  \begin{itemize}[leftmargin=*]
				\item $\langle g_{n+1}, L_1(\A_{k_n+1} , \mathcal{F}(f(E_{n+1})) ) \rangle = \{0\}$, and 
				\item $\langle g_{n+1},\widehat{f} \circ \cl{D}_{k_{n+1}} \rangle>\frac{1}{4}-\tau$.
			\end{itemize}
		Knowing $(\cl{D}_k)_k$ is $\sigma(L_1(\F(M)) , S)$-null, we may assume that $| \langle \varphi_n,\cl{D}_{k_{n+1}}\rangle | <\tau$.
		
		For every $\omega \in \Omega$, the map $g_{n+1}(\omega)\circ f\in \text{Lip}_0(M)$ is curve-flat, and satisfies that 
		$$ 
		(g_{n+1}(\omega)\circ f) (u)=0,~\text{ for all }u\in E_{n+1},
		$$
		because $g_{n+1}$ vanishes on $L_1 (\mathcal{A}_{k_n+1}, \mathcal{F} (f (E_{n+1})))$. In particular, keeping in mind the construction of $E_{n+1}$, we also observe that 
			\[
			\langle g_{n+1}, \widehat{f} \circ \cl{D}_{k_{i}} \rangle = \int_{\Omega}  \langle g_{n+1}(\omega) \circ f, \cl{D}_{k_{i}}(\omega)\rangle d\PP = 0
			\]
			for all $i=1,\ldots,n$.  
		Moreover:
		$$ \langle g_{n+1}, \widehat{f} \circ \cl{D}_{k_{n+1}} \rangle = \int_{\Omega}  \langle g_{n+1}(\omega) \circ f, \cl{D}_{k_{n+1}}(\omega)\rangle d\PP > \frac{1}{4}-\tau.$$
		Therefore, thanks to Lemma \ref{Lemma_Bate} and because $g_{n+1}$ is finitely valued, there is $h_{n+1}\in L_\infty( \A_{k_{n+1}+1} , \text{Lip}_0(M))$ with $\|h_{n+1}\|\leq 1$ and such that: 
		\begin{itemize}[leftmargin=*]
			\item $|\< h_{n+1}(\omega) , \delta(u) \> | < 2\tau$, for all $u\in M$ and all $\omega \in \Omega$,
			\item  $| \langle h_{n+1},\cl{D}_{k_i}\rangle | <\tau $ for all $i=1,\dots,n $, 
			\item $\langle h_{n+1}, \cl{D}_{k_{n+1}}\rangle >\frac{1}{4}-2\tau > \frac{1}{8}$, and
			\item  $h_{n+1}(\omega)$ is $\varepsilon_{n+1}$-flat at some radius $\theta_{n+1}$, which can be chosen so that $0<\theta_{n+1} \leq \theta_n$, for every $\omega \in \Omega$.
		\end{itemize}
		
		Now, define $\varphi_{n+1}=\varphi_n+h_{n+1}$. Let us check that $\varphi_{n+1}$ satisfies conditions (2) to (4). First, condition (2) is clear since $\varphi_n(\omega)$ is $(\sum_{i=1}^n\varepsilon_i)$-flat at radius $\theta_{n+1}$ and $h_{n+1}(\omega)$ is $\varepsilon_{n+1}$-flat at radius $\theta_{n+1}$. 
  
  Next, the estimation used in the proof of Proposition \ref{prop:CFimpliesDPCompactCase} shows that for every $\omega \in \Omega$, $\|\varphi_{n+1}(\omega)\|_L\leq\Big(1+\sum_{i=1}^n \varepsilon_i+4\frac{\tau}{\theta_n}\Big) $. Taking the maximum over $\omega \in \Omega$, we obtain (3) if $\tau$ is chosen small enough.
		
		Finally, let us examine the estimates in (4). If $i\leq n$:
		\begin{align*}
			\langle\varphi_{n+1},\cl{D}_{k_i}\rangle\geq\langle \varphi_{n},\cl{D}_{k_i}\rangle -|\langle h_{n+1},\cl{D}_{k_i}\rangle|>\frac{1}{4}-\sum_{i=1}^n\varepsilon_i-\tau.
		\end{align*}
		Simultaneously, we also have:
		\begin{align*}
			\langle \varphi_{n+1},\cl{D}_{k_{n+1}}\rangle \geq \langle h_{n+1},\cl{D}_{k_{n+1}}\rangle -|\langle \varphi_n,\cl{D}_{k_{n+1}} \rangle|>\frac{1}{4}-3\tau.
		\end{align*}
		Choosing $\tau$ so that $4\frac{\tau}{\theta_n}<\varepsilon_{n+1}$ yields conditions (3) and (4). The induction process is now complete.
		
		If we denote $h_1=\varphi_1$ then it is clear that $\varphi_i=\sum_{n=1}^i h_n$ for every $i \in \Natural$.
		By the point (3) above we have that $\norm{\varphi_i}\leq \frac98=:C$ for every $i$. Observe that 
		
		\[\begin{aligned}
				C\norm{\cl{D}_{k_i}}&\geq \mathbb E\left( \duality{\sum_{n=1}^i h_n,\cl{D}_{k_i}} \right)\\
				&= \mathbb E\left(\sum_{n=1}^i \duality{h_n,  \cl{M}_{k_i + 1}  } \right) - \mathbb E\left(\sum_{n=1}^i \duality{h_n,  \cl{M}_{k_i}  } \right) \\
				&= \mathbb E\left(\sum_{n=1}^i \duality{h_n, \mathbb{E}^{\mathcal A_{k_n+1}} ( \cl{M}_{k_i + 1} ) } \right) - \mathbb E\left(\sum_{n=1}^i \duality{h_n, \mathbb{E}^{\mathcal A_{k_n}} ( \cl{M}_{k_i} )  } \right) \\ 
				&= \mathbb E\left(\sum_{n=1}^i\duality{h_n,\cl{D}_{k_n}}\right) - \mathbb{E} \left( \sum_{n=1}^i \duality{  h_n, \cl{M}_{k_{n}+1} - \mathbb{E}^{\mathcal{A}_{k_{n}+1}} (\cl{M}_{k_i +1} )} \right) \\
				&\hspace{15em} + \mathbb{E} \left(\sum_{n=1}^i \duality{  h_n, \cl{M}_{k_{n}} - \mathbb{E}^{\mathcal{A}_{k_{n}}} (\cl{M}_{k_i } )} \right)
				\\ 
				&\geq  \frac{i}{8}   -  \left( \sum_{n=1}^i \norm{\cl{M}_{k_n +1} - \mathbb E^{\mathcal A_{k_n+1}}(\cl{M}_{k_{i}+1}) } + \sum_{n=1}^i \norm{ \cl{M}_{k_n} - \mathbb{E}^{\mathcal A_{k_n}}  (\cl {M}_{k_i} ) } \right) \\ 
				&\geq \frac{i}{8} -  \left( \sum_{n=1}^i \frac{1}{2^{k_n +1}} + \frac{1}{2^{k_n}} \right) > \frac{i}{8} -2. 
			\end{aligned}\]
		
		Since $i$ was arbitrary, this contradicts the assumption that $(\cl{M}_i)$ is bounded in $L_1(\Free(M))$.
	\end{proof}

	\subsection{Compact determination of RN operators}
	
	Just like Dunford-Pettis operators, we claim that the property ``$\hat{f}$ is Radon-Nikod\'ym" is compactly determined. The method of the proof follows the lines of the compact determination of the RNP for Lipschitz-free spaces (see \cite[Corollary 4.5]{AGPP}).

	\begin{proposition}\label{prop:RNisCompactlyDetermined}
		{Let $M, N$ be metric spaces, with $M$ complete,} and let $f \in \Lip_0(M,N)$. The property ``$\hat{f}$ is Radon-Nikod\'ym" is compactly determined, that is, $\widehat{f}:\F(M) \to \F(N)$ is Radon-Nikod\'ym if and only if for every compact subset $K$ of $M$ containing $0$, $\widehat{f\restricted_K}$ is Radon-Nikod\'ym.
	\end{proposition}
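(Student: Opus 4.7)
The forward direction is immediate: the canonical inclusion $K \hookrightarrow M$ linearizes to an isometric embedding $\widehat{\iota_K}\colon \F(K) \hookrightarrow \F(M)$, so $\widehat{f\restricted_K} = \widehat{f} \circ \widehat{\iota_K}$. Since $\RN$ forms an operator ideal, precomposition with the bounded operator $\widehat{\iota_K}$ preserves the Radon-Nikod\'ym property.

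For the reverse direction, I would argue by contrapositive. Suppose $\widehat{f}$ is not Radon-Nikod\'ym. By Lemma~\ref{Lemma:RN via unif integrable martingales}, there exists a uniformly integrable martingale $(M_n) \subset L_1(\F(M))$ such that $(\widehat{f}(M_n))_n$ does not converge in $L_1(\F(N))$. The goal is to produce a compact set $K \subset M$ containing $0$ such that $\widehat{f\restricted_K}$ is also not Radon-Nikod\'ym, contradicting the hypothesis. Following the approximation steps used in the proof of Proposition~\ref{prop:CFimpliesRNCompactCase}, I may assume that $(M_n)$ is adapted to an increasing filtration of finite $\sigma$-algebras and that each $M_n(\omega)$ is finitely supported in $\F(M)$, at the cost of replacing $(M_n)$ by a quasi-martingale that is $L_1$-close to it.

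The plan is then to apply the compact reduction principle \cite[Theorem~2.3]{APP21}, in the spirit of the proof of Proposition~\ref{prop:DPisCompactlyDetermined}. Using uniform integrability, for any $\delta > 0$ there exists $R > 0$ such that $\sup_n \int_{\{\|M_n\|>R\}} \|M_n\|\, d\PP < \delta$; this allows me to focus on the bounded portion of the range $W_R := \{M_n(\omega) : n \in \N,\ \omega \in \Omega,\ \|M_n(\omega)\| \leq R\}$. Applying \cite[Theorem~2.3]{APP21} to $W_R$ yields a compact $K \subset M$ with $0 \in K$, a linear operator $T\colon \linsp(W_R) \to \F(K)$ with $\|\mu - T\mu\| \leq \delta$ for every $\mu \in W_R$, and a sequence of bounded operators $T_k\colon \F(M) \to \F(M)$ converging to $T$ uniformly on $W_R$. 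The key observation is that the linearity of $T$ guarantees that the process $(T(M_n))_n$ respects the filtration (since each $M_n$ takes finitely many values in $\linsp(W_R)$ and $T$ commutes with the finite-sum conditional expectations), and the uniform approximation by $T_k$ ensures that $(T(M_n))_n$ is a well-defined quasi-martingale in $L_1(\F(K))$ that stays $L_1$-close to $(M_n)$.

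Finally, combining the $L_1$-estimates from the truncation at level $R$ and from the compact reduction $\|\mu - T\mu\| \leq \delta$, I conclude that $(\widehat{f\restricted_K}(T M_n))_n$ remains non-convergent in $L_1(\F(N))$, via the isometric identification of $\widehat{f\restricted_K}$ with $\widehat{f}\restricted_{\F(K)}$; hence $\widehat{f\restricted_K}$ is not Radon-Nikod\'ym, giving the desired contradiction. The main obstacle is twofold: the range of $(M_n)$ is not a priori bounded in $\F(M)$, so the truncation step must be carried out carefully; and after truncation and compact reduction one must verify that the resulting process is genuinely a quasi-martingale whose image under $\widehat{f\restricted_K}$ still fails to converge, which requires a careful bookkeeping of all $L_1$-errors accumulated at each step of the construction.
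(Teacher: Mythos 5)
There is a genuine gap in the reverse direction: you apply \cite[Theorem~2.3]{APP21} to the set of values $W_R=\{M_n(\omega):\ \|M_n(\omega)\|\leq R\}$, but the only information you have about this set is that it is norm bounded, and compact reduction fails for general bounded subsets of $\F(M)$ (for instance, if $M$ is an infinite bounded uniformly discrete space, compact subsets are finite and molecules $m_{x,y}$ with $x,y$ outside $K$ stay at a fixed positive distance from $\F_M(K)$, so no single compact $K$ can $\delta$-approximate all values). Uniform integrability does not repair this: it controls the distribution of the scalar functions $\|M_n(\cdot)\|$, not where the values $M_n(\omega)$ sit in $\F(M)$, so truncating at level $R$ still leaves you with a merely bounded set to which \cite[Theorem~2.3]{APP21} does not apply (in the Dunford--Pettis case, Proposition~\ref{prop:DPisCompactlyDetermined}, the theorem is invoked for a weakly null sequence, which is where the hypothesis comes from). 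This is precisely why the paper does not reduce the set of values but the sequence in $L_1(\F(M))$ itself: it invokes the martingale-adapted compact reduction \cite[Proposition~4.2]{AGPP}, whose hypothesis (the \emph{mean} Kalton property) is an $L_1$-averaged condition that uniformly integrable martingales do satisfy by \cite[Proposition~4.3 and Remark~4.4]{AGPP}; values far from any compact are allowed as long as they live on sets of small probability.

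A second, smaller issue: even granting a compact reduction, your construction ends with a quasi-martingale (you pass to finitely supported approximations as in Proposition~\ref{prop:CFimpliesRNCompactCase}), whereas the characterization you want to contradict, Lemma~\ref{Lemma:RN via unif integrable martingales}, speaks of uniformly integrable \emph{martingales}. In the compact case the paper can afford quasi-martingales because it derives a contradiction with boundedness directly by a gliding hump; for the compact determination it instead keeps the genuine martingale structure: the process $(T(M_n))_n$ produced by \cite[Proposition~4.2]{AGPP} is shown (as in \cite[Corollary~4.5]{AGPP}) to be a uniformly integrable martingale in $L_1(\F(K))$, so that the non-convergence of $(\widehat{f\restricted_K}(T(M_n)))_n$ contradicts the lemma with no further decomposition. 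To salvage your route you would need both a compact reduction valid for your process and an argument returning you to a uniformly integrable martingale (e.g.\ an $L_1$-small perturbation or a Doob--Rao type decomposition), neither of which is supplied.
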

	
	\begin{proof}
		The ``only if" implication is clear.  We will prove the contrapositive of the ``if" direction.
		Suppose that $f : M \to N$ is a Lipschitz map such that $\widehat{f} : \F(M) \to \F(N)$ is not Radon-Nikod\'ym.
		By Lemma~\ref{Lemma:RN via unif integrable martingales}, 
		there exists a uniformly integrable martingale $(M_n)_n$ in $L_1(\lipfree{M})$ such that $(\widehat{f}(M_n))_n$ does not converge in $L_1(\lipfree{N})$. So there exists some $\delta>0$ such that
		
		\begin{equation}\label{eq:KP_0}
		    \limsup\limits_{n,m \to \infty}\|\widehat{f}(M_n)-\widehat{f}(M_m)\|_{L_1(\lipfree{N})} > \delta.
		\end{equation}
		
		By \cite[Proposition~4.2]{AGPP} (with $\eps = \frac{\delta}{4\|f\|_L}$, and using the fact that $(M_n)$ has the \emph{mean Kalton property} thanks to \cite[Proposition~4.3 and Remark~4.4]{AGPP}),
		there exist a compact subset $K$ of $M$ containing $0$, bounded linear operators $T_k:\lipfree{M}\to\lipfree{M}$ and a map $T:(M_n)_n\to L_1(\lipfree{K})$ such that
		\begin{equation}\label{eq:KP_1}
			\sup_{n\in\NN} \norm{M_n-T(M_n)}_{L_1(\lipfree{M})}\leq\frac{\delta}{4\|f\|_L}
		\end{equation}
		and 
		\begin{equation}\label{eq:KP_2}
			\lim_{k\to\infty}\sup_{n\in\NN} \norm{T_k(M_n)-T(M_n)}_{L_1(\lipfree{M})}=0 .   
		\end{equation}
		
		As demonstrated in the proof of \cite[Corollary~4.5]{AGPP}, we can observe that the sequence $(T(M_n))_n$ is a $L_1 (\mathcal{F}(K))$-bounded martingale using \eqref{eq:KP_1} {and \eqref{eq:KP_2}}. 
  
  Moreover, the sequence $(T_k(M_n))_n$ is uniformly integrable for each $k$ since each $T_k$ is bounded operator. Also, condition \eqref{eq:KP_2} then implies that $(T(M_n))_n$ is an $L_1(\lipfree{K})$-uniformly integrable martingale. But we have
		$$\limsup_{n,m \to \infty} \|\widehat{f}\circ T(M_n)-\widehat{f}\circ T(M_m)\|_{L_1(\lipfree{N})} > \frac{\delta}{2}$$
		by \eqref{eq:KP_0} and \eqref{eq:KP_1}, so $(\hat{f\restricted_K} ( T(M_n) ))_n$ cannot converge in $L_1(\lipfree{N})$. According to Lemma~\ref{Lemma:RN via unif integrable martingales} again, this shows that $\widehat{f\restricted_K} : \F(K) \to \F(N)$ is not Radon-Nikod\'ym, completing the proof.
	\end{proof}

Finally we prove the main theorem of the section.
\subsection{Proof of Theorem \ref{t:RN}}
\begin{proof}[Proof of Theorem \ref{t:RN}]
    By Theorem \ref{t:DP}, we only need to show $(1)\implies(2)$ and $(2)\implies (3)$. The first implication follows by Propositions \ref{prop:CFimpliesRNCompactCase} and \ref{prop:RNisCompactlyDetermined}. The contrapositive of the second implication is clear, since $L_1$ fails the RNP, and thus there exists an $L_1$-valued bounded martingale which does not converge almost surely. 
\end{proof}

	
	\section{Remarks on strong Radon-Nikod\'ym and representable Lipschitz operators}
	\label{section:RepresentableAndStrongRN}
   In this section, we discuss two classes of Lipschitz operators (strong Radon-Nikod\'ym and representable Lipschitz operators, see Section~\ref{ss:OperatorIdeals}) which, in some cases, can be added to the list of equivalences of Theorem \ref{t:flagship}.

 \subsection{Strong Radon-Nikod\'ym Lipschitz operators}\label{ss:StrongRNP}

    We start by showing that strong Radon-Nikod\'ym and Radon-Nikod\'ym Lipschitz operators are indeed the same provided the underlying metric space in the domain is compact and countably $1$-rectifiable.
    
	Following \cite[Definition~3.2.14, Page~251]{Federer} (see also \cite[Section~3.2]{FreemanGartland1}), a metric space $M$ is \emph{countably 1-rectifiable} if there exist a countable collection of subsets $A_i \subset \R$ and Lipschitz maps $\gamma_i : A_i \to M$ such that $\mathcal H^1\big(M \setminus \big( \bigcup_{i} \gamma_i(A_i) \big) \big) = 0$. Clearly, any subset of $\R$ is countably 1-rectifiable. Notice also that $M$ is purely 1-unrectifiable if and only if every countably 1-rectifiable subset of $M$ is $\mathcal H^1$-negligible.  
	
    \begin{prop}\label{p:RN_eq_SRN_in_1_rectifiable_spaces}
    Let $M$ be a compact metric space which is countably 1-rectifiable, let $N$ be any metric space and let $f \in \Lip_0(M,N)$. The following are equivalent.
    \begin{enumerate}
        \item $f(M)$ is $\H^1$-negligible.
        \item $\hat{f}$ is strong Radon-Nikod\'ym.
        \item $\hat{f}$ does not fix a copy of $L_1$.
    \end{enumerate}
    \end{prop}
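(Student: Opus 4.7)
The plan is to prove $(2)\Rightarrow(3)\Rightarrow(1)\Rightarrow(2)$. The first two implications rely on Theorem~\ref{t:flagship} (which applies since compactness of $M$ entails completeness), while $(1)\Rightarrow(2)$ is the main step and will invoke Theorem~C of \cite{AGPP} after factoring $\hat{f}$ through $\Free(f(M))$.

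The implication $(2)\Rightarrow(3)$ is immediate: strong Radon-Nikod\'ym operators are Radon-Nikod\'ym (Section~\ref{ss:OperatorIdeals}), and hence, by Theorem~\ref{t:flagship}, they do not fix any copy of $L_1$. For $(3)\Rightarrow(1)$, Theorem~\ref{t:flagship} yields that $f$ is curve-flat. Using countable 1-rectifiability, write $M=E\cup\bigcup_{i}\gamma_i(A_i)$ with $\H^1(E)=0$ and each $\gamma_i\colon A_i\to M$ Lipschitz for some $A_i\subset\R$. Since $M$ is complete each $\gamma_i$ extends to the closure of $A_i$, so we may assume $A_i$ is closed, hence $\sigma$-compact: $A_i=\bigcup_n K_{i,n}$ with $K_{i,n}$ compact. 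Proposition~\ref{prop:curveflat}$(ii)$ applied to the Lipschitz curve $\gamma_i|_{K_{i,n}}$ yields $\H^1(f(\gamma_i(K_{i,n})))=0$ for all $i,n$, and $\H^1(f(E))=0$ since Lipschitz maps send $\H^1$-null sets to $\H^1$-null sets. Countable subadditivity of $\H^1$ then gives that $f(M)$ is $\H^1$-negligible.

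The main step is $(1)\Rightarrow(2)$. Set $L:=f(M)$, a compact pointed subspace of $N$ with basepoint $0=f(0)$. Since $\H^1(L)=0$, every subset of $L$ has vanishing $\H^1$-measure, so $L$ is purely 1-unrectifiable. By Theorem~C of \cite{AGPP}, $\Free(L)$ has the RNP. Now factorize $\hat{f}=\hat{\iota}\circ\hat{g}$, where $g\colon M\to L$ is the corestriction of $f$ and $\iota\colon L\hookrightarrow N$ is the inclusion; as recalled in Section~\ref{ss:LFprelim}, $\hat{\iota}\colon\Free(L)\to\Free(N)$ is an isometric embedding onto the closed subspace $\Free_N(L)$. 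The closure $C:=\overline{\hat{g}(B_{\Free(M)})}$ is a closed bounded subset of $\Free(L)$, hence a Radon-Nikod\'ym set because $\Free(L)$ has the RNP. Since $\hat{\iota}$ is a homeomorphism onto the closed subspace $\Free_N(L)$, we have $\hat{\iota}(C)=\overline{\hat{f}(B_{\Free(M)})}\subset\Free(N)$, and this set inherits the Radon-Nikod\'ym set property: any vector measure $m$ into $\Free(N)$ with averages in $\hat{\iota}(C)$ actually takes values in the closed subspace $\hat{\iota}(\Free(L))$ and pulls back through $\hat{\iota}^{-1}$ to a vector measure into $\Free(L)$ with averages in $C$, which admits a Bochner derivative, and this derivative pushes forward through $\hat{\iota}$ to a Bochner derivative for $m$. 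Thus $\hat{f}$ is strong Radon-Nikod\'ym.

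The heavy lifting is done by Theorem~C of \cite{AGPP} and Theorem~\ref{t:flagship}; the remaining obstacle, if any, is the routine verification that the Radon-Nikod\'ym set property transfers along an isometric embedding, which was sketched above.
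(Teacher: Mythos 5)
Your proof is correct. The implications $(1)\Rightarrow(2)$ and $(2)\Rightarrow(3)$ coincide with the paper's argument: the paper also factors through $\F(f(M))\equiv\F_N(f(M))$, invokes Theorem~C of \cite{AGPP} for the RNP of $\F(f(M))$, and leaves implicit the routine transfer of the Radon--Nikod\'ym-set property along the isometric embedding $\hat{\iota}$, which you spell out correctly (note that boundedness of the averages forces $m$ to vanish on null sets, so $m$ indeed takes values in the closed subspace). Where you genuinely diverge is $(3)\Rightarrow(1)$: the paper argues by contrapositive and directly, assuming $\H^1(f(M))>0$ it picks an index $i$ with $\H^1(f\circ\gamma_i(A_i))>0$ by $\sigma$-subadditivity, applies Kirchheim's lemma to get a compact $B\subset A_i$ with $(f\circ\gamma_i)\restricted_B$ bi-Lipschitz, and uses Godard's theorem to exhibit a copy of $L_1$ fixed by $\hat{f}$; you instead deduce curve-flatness of $f$ from Theorem~\ref{t:flagship} and then kill $\H^1(f(M))$ via the rectifiable decomposition (extending each $\gamma_i$ to the closed, hence $\sigma$-compact, set $\overline{A_i}$ and applying Proposition~\ref{prop:curveflat}(ii) piecewise), which is valid and not circular since Theorem~\ref{t:flagship} is established in Sections~\ref{section:DP}--\ref{section:RN}. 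The trade-off: your route leans on the heavier flagship theorem (whose implication from ``does not fix $L_1$'' to curve-flat is itself proved by the same Kirchheim--Godard mechanism) and needs completeness of $M$ both for the extension of the $\gamma_i$ and for Theorem~\ref{t:flagship} (harmless here, as $M$ is compact), whereas the paper's direct construction is self-contained and, as exploited in Remark~\ref{Remark:H1_negligible_DP}, does not use compactness at all for this implication.
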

    \begin{proof}
        $(1) \implies (2)$. Since $M$ is compact and $\H^1(f(M)) = 0$, $f(M)$ is complete and purely 1-unrectifiable. Thus, \cite[Theorem~C]{AGPP} implies that $\F(f(M))$ has the Radon-Nikod\'ym property. Since $\widehat{f}(B_{\F(M)})$ is contained in $\F(f(M))$, this yields that $\widehat{f}$ is strong Radon-Nikod\'ym.

        $(2)\implies (3)$. This implication is true in general, since every strong Radon-Nikod\'ym operator is Radon-Nikod\'ym, and, as we discussed in the proof of Theorem~\ref{t:RN}, such an operator cannot fix a copy of $L_1$.
        
        $(3)\implies (1)$. We do not use the compactness assumption on $M$ for this implication.  Suppose that $\H^1(f(M)) > 0$. Since $M$ is countably 1-rectifiable, there exist a countable collection $(A_i)_{i \in I}$ of subsets of $\R$ and Lipschitz maps $\gamma_i : A_i \to M$ such that $\mathcal H^1\big(M \setminus \big( \bigcup_{i\in I}\gamma_i(A_i) \big) \big) = 0$. Since $f$ is Lipschitz, 
		\begin{align*}
			\mathcal H^1\Big(f(M) \setminus f\big( \bigcup_{i\in I}\gamma_i(A_i)\big) \Big)  &\leq \mathcal H^1\Big(f\big(M \setminus \big( \bigcup_{i\in I}\gamma_i(A_i) \big) \big) \Big) \\
			&\leq \|f\|_L \;  \mathcal H^1\big(M \setminus \big( \bigcup_{i\in I}\gamma_i(A_i) \big) \big) = 0.
		\end{align*}

  Therefore, by $\sigma$-additivity of $\mathcal H^1$, there exists $i \in I$ such that $\H^1(f \circ \gamma_i(A_i)) > 0$. By Kirchheim's lemma, there exists a compact subset $B$ of  $A_i$ such that $\lambda(B)>0$ and $(f\circ \gamma_i)\restricted_B$ is bi-Lipschitz. Thus $\gamma_i\restricted_B : B \to \gamma_i(B)$ and $f\restricted_{\gamma_i(B)} : \gamma_i(B) \to f\circ \gamma_i(B) $ are both bi-Lipschitz as well, and in particular $\widehat{\gamma_i\restricted_B}$ and $\widehat{f\restricted_{\gamma_i(B)}}$ are isomorphisms onto their respective range.  On the other hand, since $\lambda(B)>0$, Godard's work \cite{Godard} implies that $\F(B)$ contains an isomorphic copy $E$ of $L_1$. Thus $\widehat{\gamma_i\restricted_B}(E)$ is also an isomorphic copy of $L_1$. Restricting $\hat{f}$ to this copy of $L_1$ yields the desired result.
    \end{proof}

\begin{remark}\label{Remark:H1_negligible_DP}
    It is worth noting that the equivalence between $(1)$ and $(3)$ in Proposition \ref{p:RN_eq_SRN_in_1_rectifiable_spaces} holds 
    when $M$ is only assumed to be complete and not necessarily compact. 
    Indeed, we already pointed out that our proof of $(3)\implies (1)$ does not need the compactness of $M$. 
    Conversely, let $M$ be a complete countably $1$-rectifiable set, and suppose that $f(M)$ is $\mathcal{H}^1$-negligible for a given 
    $f\in \Lip_0(M,N)$. 
	It is clear that $f(K)$ is also $\mathcal{H}^1$-negligible for any compact $K \subset M$. 
    Then, 
    (if moreover  $0\in K$)
    by Proposition \ref{p:RN_eq_SRN_in_1_rectifiable_spaces} and Theorem \ref{t:flagship}, we see that $\widehat{f \restricted_K}$ is Dunford-Pettis. 
    As $K$ can be any compact subset of $M$ 
    such that $0\in K$, 
    we can apply Proposition \ref{prop:DPisCompactlyDetermined} to conclude that $\hat{f}$ is Dunford-Pettis, and thus cannot fix a copy of $L_1$. 

 Notice however that completeness of $M$ is necessary. 
 For instance, consider $M=\mathbb Q \cap [0,1]$, $N=[0,1]$ and $f=\Id_M$. Then $\widehat{f}$ is conjugate to the identity on $L_1$; so it fixes a copy of $L_1$ while the set $f(M)=M$ is countable and thus $\H^1$-negligible. 
\end{remark}

Thanks to Theorem \ref{t:flagship}, Proposition \ref{p:RN_eq_SRN_in_1_rectifiable_spaces} shows in particular that, if $M$ is a compact countably $1$-rectifiable metric space, then $\hat{f}\colon \F(M)\rightarrow \F(N)$ is Radon-Nikod\'ym if and only if is strong Radon-Nikod\'ym. However, this is not true if $M$ is only assumed to be complete, as shown by the following example.

\begin{example}\label{Example:RN_not_SRN}
    Let $N$ be a complete, separable and non-purely $1$-unrectifiable metric space, with diameter not greater than $1$ (e.g. the unit interval in $\R$). Let $(x_k)_k$ be a countable dense subset of $N$, and let $\varphi\colon \N\rightarrow \tilde{\N}=\N^2\setminus \{(n,n)\colon n\in\N\}$ be an enumeration. Write $\varphi(n) = ( \varphi(n)_1, \varphi(n)_2)$ for each $n \in \mathbb{N}$. Consider $T$ the unique $\R$-tree with only one branching point $0\in T$ and with branches $B_n:=[0,1+d(x_{\varphi(n)_1},x_{\varphi(n)_2})]$ for every $n\in \N$. Denote by $e_{(n,1)}$ the point in $B_n$ at distance $1$ from $0$, and by $e_{(n,2)}$ the point in $B_n$ at distance $1+d(x_{\varphi(n)_1},x_{\varphi(n)_2})$ from $0$ and thus at distance $d(x_{\varphi(n)_1},x_{\varphi(n)_2})$ from $e_{(n,1)}$ . Let $M=\bigcup_{n\in \N}\{e_{(n,1)},e_{(n,2)}\}\cup \{0\}\subset T$, with the $\R$-tree distance inherited from $T$. It is clear that $M$ is a complete metric space. Then, for every pair {$k_1\neq k_2\in {\mathbb N}$}, if we choose $n=\varphi^{-1}(k_1,k_2)$, it holds that $d_N(x_{k_1},x_{k_2})=d_M(e_{(n,1)},e_{(n,2)})$. Then we define $f\colon M\rightarrow N$ as $f(0)=0$, $f(e_{(n,1)})=x_{\varphi(n)_1}$, and $f(e_{(n,2)})=x_{\varphi(n)_2}$. Since the diameter of $N$ is not greater than $1$, it follows from the tree structure of $M$ that $f$ is $1$-Lipschitz. Then, using that $(x_k)_k$ is dense in $N$, it easily follows that $C_f=(\hat{f})^*$ is a linear isometry. Hence, $\hat{f}$ is a linear quotient from $\F(M)$ onto $\F(N)$. This implies that $\hat{f}$ is not strong Radon-Nikod\'ym, since $\F(N)$ fails the RNP. However, $\hat{f}$ is Radon-Nikod\'ym because $M$ is purely $1$-unrectifiable (so, $\mathcal{F}(M)$ has the RNP), and thus {$f$} is trivially curve-flat.
    Finally, since $M$ is countable, we have $\H^1(M)=0$ and so $M$ is countably 1-rectifiable.

\end{example}

	\subsection{Representable Lipschitz operators}
{In this subsection we consider the special case of complete metric spaces $M$ such that $\F(M)$ is isomorphic to $L_1(\mu_M)$, for some finite measure $\mu_M$. So assume that $\Phi: L_1(\mu_M) \to \F(M)$ is an isomorphism. Then, for any metric space $N$ and any $0$-preserving Lipschitz map $f:M \to N$, the Lipschitz operator $\widehat{f} : \F(M) \to \F(N)$ is conjugate to a bounded operator $T_f=\widehat{f} \circ \Phi: L_1(\mu_M) \to \F(N)$. We can thus study when the operator $T_f$ is representable, based on the properties of $f$. Our main result in this subsection is based on the following general fact. Although we did not find it explicitly stated in the literature, this is well-known among specialists. Let us just say that it is an operator version of \cite[Theorem~III.1.5]{Diestel}. It can easily be obtained by combining the characterization of RN operators due to Edgar \cite[p. 461]{Edgar} and \cite[Lemma~III.1.4]{Diestel} for one direction and by using the Lewis-Stegall factorization theorem of representable operators (see e.g. the proof of \cite[Theorem~III.1.8]{Diestel}) for the converse. 
 
 \begin{lemma} \label{Lemma:RepresentableIffRNP}
		Let $(\Omega , \Sigma , \mu)$ be a finite measure space and $Y$ be a Banach space. Then a bounded operator $T : L_1(\mu) \to Y$ is representable if and only if it is Radon-Nikod\'ym.
	\end{lemma}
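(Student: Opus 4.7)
My plan is to prove the two implications separately, using approximation/factorization for the easy direction and a martingale construction for the hard one.

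For the direction representable $\Rightarrow$ RN, suppose $g \in L_\infty(\mu, Y)$ represents $T$. Uniformly approximate $g$ in $L_\infty(\mu,Y)$-norm by simple functions $g_n = \sum_{i=1}^{k_n} y_i^{(n)} \chi_{A_i^{(n)}}$, where $(A_i^{(n)})_i$ is a finite measurable partition of $\Omega$. The associated operators $T_n(h) = \sum_i y_i^{(n)} \int_{A_i^{(n)}} h\, d\mu$ factor through the finite-dimensional space $\ell_1^{k_n}$, hence through a Banach space with the RNP, so each $T_n$ belongs to $Op(RNP) \subset \RN$. Since $\|T_n - T\| \leq \|g_n - g\|_\infty \to 0$ and the RN operators form a closed ideal, $T$ is Radon-Nikod\'ym. (One can in fact verify directly that the closed convex hull of $\pm\{g(\omega):\omega \in \Omega\}$ captures $T(B_{L_1(\mu)})$ up to approximation, which even yields strong RN; but this is not needed here.)

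For the converse, I would reconstruct the density $g$ via a martingale argument that feeds into Lemma~\ref{Lemma:RN via unif integrable martingales}. After normalizing $\mu$ to a probability measure and reducing (by a standard exhaustion) to the case where $\Sigma$ is countably generated, choose an increasing sequence $(\Pi_n)$ of finite measurable partitions whose generated $\sigma$-algebras $\mathcal F_n$ satisfy $\sigma(\bigcup_n \mathcal F_n) = \Sigma$. For $\omega \in \Omega$ let $A_n(\omega) \in \Pi_n$ be its atom, and define $M_n : \Omega \to L_1(\mu)$ by $M_n(\omega) = \chi_{A_n(\omega)}/\mu(A_n(\omega))$. A direct computation shows that $(M_n, \mathcal F_n)$ is a martingale in $L_1(\mu, L_1(\mu))$ with $\|M_n(\omega)\|_{L_1(\mu)} = 1$ almost surely, so it is uniformly integrable. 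Applying $T$ pointwise gives $g_n := T \circ M_n$, where $g_n(\omega) = T(\chi_{A_n(\omega)})/\mu(A_n(\omega))$ and $\|g_n(\omega)\|_Y \leq \|T\|$. By Lemma~\ref{Lemma:RN via unif integrable martingales}, $(g_n)$ converges in $L_1(\mu, Y)$ to some $g$; the almost-sure uniform bound passes to the limit, yielding $g \in L_\infty(\mu, Y)$ with $\|g\|_\infty \leq \|T\|$.

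It remains to verify that $g$ represents $T$. By the definition of $g_n$ we have $T(\chi_A) = \int_A g_n\, d\mu$ whenever $A \in \mathcal F_n$; letting $n \to \infty$ along the $L_1(\mu, Y)$-convergence, this identity persists as $T(\chi_A) = \int_A g\, d\mu$ for every $A \in \bigcup_n \mathcal F_n$, and density of the corresponding simple functions in $L_1(\mu)$ extends the representation $T(h) = \int hg\, d\mu$ to all $h \in L_1(\mu)$. The main obstacle is essentially bookkeeping: framing $(M_n)$ as a genuine $L_1(\mu, L_1(\mu))$-valued, uniformly integrable martingale, handling null atoms, and arranging the filtration so that $\bigcup_n \mathcal F_n$ is $L_1$-dense; with these in place, the RN hypothesis supplies convergence and the identification of $g$ with the Radon-Nikod\'ym derivative of the vector measure $A \mapsto T(\chi_A)$ is routine.
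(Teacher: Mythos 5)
Your strategy is genuinely different from the paper's, which does not prove the lemma from scratch but assembles it from the literature: Edgar's characterization of RN operators plus \cite[Lemma~III.1.4]{Diestel} for ``RN $\Rightarrow$ representable'', and the Lewis--Stegall factorization of representable operators through $\ell_1$ for the converse. Your self-contained route is viable, but each half has a concrete gap. In the direction ``representable $\Rightarrow$ RN'', you cannot in general approximate $g\in L_\infty(\mu,Y)$ \emph{uniformly} by finitely-valued simple functions: the essential range of $g$ is separable but need not be totally bounded (take $Y=\ell_1$ and $g$ equal to $e_n$ on the $n$-th piece of a partition of $[0,1]$; no finitely-valued function is uniformly within $\tfrac12$ of it). The Pettis measurability theorem only yields a.e.-uniform approximation by \emph{countably} valued functions, so your $T_n$ factor through $\ell_1(\NN)$ rather than $\ell_1^{k_n}$. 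The argument survives --- $\ell_1$ has the RNP, so $T_n\in Op(RNP)\subset\RN$, and $\RN$ is norm-closed by the characterization in Lemma~\ref{Lemma:RN via unif integrable martingales} --- but the approximation step as written is false; this countably-valued decomposition is exactly what Lewis--Stegall packages.

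In the converse direction, the martingale $M_n(\omega)=\chi_{A_n(\omega)}/\mu(A_n(\omega))$, its uniform integrability, and the identification of the limit as a density are all fine. The gap is the step you dismiss as ``standard exhaustion'': for a general finite measure space, $\Sigma$ is not countably generated and $L_1(\mu)$ is nonseparable, and a single increasing sequence of finite partitions only produces a density for $T\restricted_{L_1(\Sigma_0,\mu)}$ with $\Sigma_0$ countably generated. Gluing these densities requires showing that the martingale \emph{net} indexed by all finite partitions (or all countably generated sub-$\sigma$-algebras) is Cauchy in $L_1(\mu,Y)$. This can be done --- if the net were not Cauchy one extracts an increasing sequence of partitions whose associated uniformly integrable $L_1(\mu)$-valued martingale is not mapped by $T$ to a convergent sequence, contradicting Lemma~\ref{Lemma:RN via unif integrable martingales} --- but this net-to-sequence extraction is the crux of the nonseparable case, not bookkeeping, and must be carried out for the proof to be complete.
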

We now deduce the following corollary. Note that several more items could be added to the equivalence therein according to Theorem \ref{t:flagship}.

\begin{corollary} \label{cor:DPPiffRepiffRNP}
Let $M$ be a complete pointed  metric space and assume that there exists an isomorphism $\Phi$  from  $L_1(\mu_M)$ onto $\F(M)$, where $\mu_M$ is a finite measure. Let $N$ be a pointed metric space and let {$f \in \Lip_0 (M,N)$} be given. Then the following assertions are equivalent.
		\begin{enumerate}[$(i)$]
			\item $T_f = \widehat{f}\circ \Phi : L_1(\mu_M) \to \F(N)$ is representable.
			\item $\widehat{f}$ is Radon-Nikod\'ym.
		\end{enumerate}
	\end{corollary}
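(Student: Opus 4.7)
The proof should be very short, essentially a two-step chain. The plan is to combine Lemma~\ref{Lemma:RepresentableIffRNP} with the fact that the Radon-Nikod\'ym property is an operator ideal.

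First, since $T_f : L_1(\mu_M) \to \F(N)$ is a bounded linear operator whose domain is an $L_1$-space over a finite measure, Lemma~\ref{Lemma:RepresentableIffRNP} applies directly and tells us that $T_f$ is representable if and only if $T_f$ is Radon-Nikod\'ym. So the corollary reduces to showing that $T_f$ is RN if and only if $\widehat{f}$ is RN.

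Second, this equivalence is immediate from the fact that $\RN$ is an operator ideal (as recalled in Section~\ref{ss:OperatorIdeals}) and $\Phi$ is an isomorphism. More precisely, if $\widehat{f}$ is RN, then $T_f = \widehat{f} \circ \Phi$ is RN because composing an RN operator with a bounded operator on the right yields an RN operator. Conversely, if $T_f$ is RN, then $\widehat{f} = T_f \circ \Phi^{-1}$ is RN since $\Phi^{-1}$ is bounded.

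There is no real obstacle here; the whole content of the corollary is packaged inside Lemma~\ref{Lemma:RepresentableIffRNP}, and the role of $\Phi$ is merely to transfer the conclusion between $\widehat{f}$ and $T_f$ via the ideal property. The only thing to keep in mind while writing it is to state explicitly both directions in the ideal argument so that the equivalence is clearly symmetric.
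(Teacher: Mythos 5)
Your proposal is correct and follows the same route as the paper: apply Lemma~\ref{Lemma:RepresentableIffRNP} to reduce representability of $T_f$ to $T_f$ being Radon-Nikod\'ym, then transfer between $T_f=\widehat{f}\circ\Phi$ and $\widehat{f}=T_f\circ\Phi^{-1}$ using the ideal property of $\RN$. Your explicit spelling out of both directions of the ideal argument is a minor (harmless) elaboration of what the paper states in one line.
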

	\begin{proof}
Applying Lemma \ref{Lemma:RepresentableIffRNP} we obtain that $T_f$ is representable if and only if $T_f$ is Radon-Nikod\'ym. Since $\hat{f}=T_f\circ \Phi^{-1}$ and Radon-Nikod\'ym operators form an operator ideal, we obtain the desired result.
\end{proof}

\begin{remark}
The assumptions of the above corollary apply to separable $\R$-trees \cite{Godard} and even to quasiconformal trees \cite{FreemanGartland1}. We also refer to \cite{FreemanGartland2} and \cite{Gartland} for more examples of metric spaces whose free spaces are isomorphic to $L_1$. 
\end{remark}}

\subsection{Remarks on bounded operators from \texorpdfstring{$\Free(M)$}{F(M)} to \texorpdfstring{$X$}{X}}

Here we are going to show that  there is no analog of Theorem~\ref{t:flagship} 
 for the linearization $\overline{f}:\mathcal{F}(M) \rightarrow X$ of a Lipschitz map $f:M \to X$, where $M$ is a metric space and $X$ is a Banach space.
 This should be no surprise as every bounded linear operator $T:\Free(M) \to X$ is of the form $\overline{f}$ for some $f \in \Lip_0(M,X)$.

\begin{proposition}\label{p:NoAnalogue}
    Let $M$ be a complete metric space, $X$ be a Banach space and $f\in \Lip_0(M,X)$. Then
\[
\xymatrix{
	{f\in \CF} \ar@{=>}[rr]  \ar@{=>}[dd] \ar@{=>}[rrdd]  & & {\overline{f}\in \DP} \ar@{=>}[dd]  \\
	\\
	{\overline{f}\in \RN} \ar@{=>}[rr] & & {\overline{f} \mbox{ does not fix copy of } L_1},
}
\]
and all the implications not shown on the diagram are false in general.
\end{proposition}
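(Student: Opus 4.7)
The plan breaks into two parts: verifying the five implications shown in the diagram, and producing counterexamples to the seven implications not shown.

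For the shown implications I would introduce the barycenter map $\beta_X\colon \mathcal F(X) \to X$, i.e.\ the unique bounded linear extension of $\mathrm{id}_X$, and observe the factorization $\overline{f} = \beta_X \circ \widehat{f}$. Since $\DP$ and $\RN$ are operator ideals closed under right composition with bounded operators, Theorem~\ref{t:flagship} applied to $\widehat{f}$ transfers both properties to $\overline{f}$. The property ``does not fix a copy of $L_1$'' is inherited the same way: if $\overline{f}|_Z$ were an isomorphic embedding for some $Z\cong L_1$, then $\widehat{f}|_Z$ would also be bounded below, forcing $\widehat{f}$ to fix $L_1$, contradicting Theorem~\ref{t:flagship}. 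Finally, the implications $\DP\Rightarrow$``not fix $L_1$'' and $\RN\Rightarrow$``not fix $L_1$'' are classical: $L_1$ admits a normalized weakly null sequence and a bounded $L_1$-valued martingale which fails to converge almost everywhere.

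For the counterexamples I would proceed in three groups. First, the three arrows with target $\CF$ are refuted simultaneously by the trivial example $M=[0,1]$, $X=\RR$, $f(x)=x$: here $\overline{f}$ is the Lebesgue integral on $L_1$, which has rank one, hence is DP and RN and does not fix $L_1$; yet $f$ is bi-Lipschitz, so $\mathcal{H}^1(f([0,1]))>0$ and $f$ is not curve-flat. Second, for ``not fix $L_1$''$\not\Rightarrow\DP$ and ``not fix $L_1$''$\not\Rightarrow\RN$, I would take $M=[0,1]$, $X=c_0$, and $f(x)=(\int_0^x r_n)_n$, where $(r_n)$ is the Rademacher system. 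The linearization $\overline{f}\colon L_1\to c_0$ sends $h\mapsto(\int h\, r_n)_n$; it maps the weakly null sequence $(r_n)$ to the canonical basis $(e_n)$ of $c_0$, so $\overline{f}$ is not DP; the would-be kernel $t\mapsto(r_n(t))_n$ fails to take values in $c_0$, so by Lemma~\ref{Lemma:RepresentableIffRNP} the operator $\overline{f}$ is not RN; and since $c_0$ contains no copy of $\ell_1$, it contains no copy of $L_1$, hence $\overline{f}$ does not fix $L_1$.

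Third and last are the mutual non-implications between $\DP$ and $\RN$. For $\RN\not\Rightarrow\DP$, I would search for a Lipschitz map $f\colon[0,1]\to Y$ whose linearization $\overline{f}\colon L_1\to Y$ is representable by some Bochner-measurable kernel $g\in L_\infty([0,1],Y)$ whose essential range is not norm compact in $Y$: the representability yields RN through Lemma~\ref{Lemma:RepresentableIffRNP}, while the lack of compactness of the range of $g$ permits a weakly null sequence in $L_1$ to be mapped to a non-norm-null sequence in $Y$. For $\DP\not\Rightarrow\RN$, I would invoke the existence of a Banach space $S$ having the Schur property but failing RNP (available via exotic constructions in Banach space theory), and take $M=S$, $X=S$, $f=\mathrm{id}_S$; then $\overline{f}=\beta_S$ is DP because $\beta_S$ is weakly continuous and $S$ has Schur, while the failure of RNP in $S$ supplies a bounded $S$-valued martingale with no almost-sure limit that, suitably lifted to a bounded $\mathcal F(S)$-valued martingale, witnesses that $\beta_S$ is not RN. The main obstacle in this last step is the martingale lifting itself, since the canonical section $\delta_S\colon S\to\mathcal F(S)$ is only Lipschitz and not linear, so producing a genuine $\mathcal F(S)$-valued martingale projecting onto the given $S$-valued one will require a careful argument exploiting the structure of $\mathcal F(S)$.
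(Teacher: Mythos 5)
Your treatment of the five displayed implications (via $\overline{f}=\beta_X\circ\widehat{f}$, Theorem~\ref{t:flagship} and the ideal property), your identity example killing the three arrows into $\CF$, and your Rademacher map $f(x)=(\int_0^x r_n)_n$ into $c_0$ (which simultaneously refutes ``does not fix $L_1$'' $\Rightarrow\DP$ and $\Rightarrow\RN$, in the same spirit as the paper's dyadic-interval remark) are all correct. The problems are in your third group. For $\RN\not\Rightarrow\DP$ your plan cannot succeed: you restrict the search to maps $f:[0,1]\to Y$, i.e.\ to operators $\overline{f}$ defined on $\F([0,1])\equiv L_1$. By Lemma~\ref{Lemma:RepresentableIffRNP} a Radon--Nikod\'ym operator on $L_1$ is representable, and by the Lewis--Stegall theorem a representable operator factors through $\ell_1$; since $\ell_1$ has the Schur property, every such operator is automatically Dunford--Pettis. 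So no example with domain $[0,1]$ exists, and the mechanism you invoke is false in any case: a kernel with non-norm-compact essential range need not destroy the DP property (take $g(t)=e_n$ for $t$ in a measurable partition $(A_n)$ of $[0,1]$; the resulting operator maps into $\ell_1$ and is DP). The fix is to change the domain, as the paper does: take $f=\Id_{\ell_2}$, so that $\overline{f}=\beta_{\ell_2}$ is RN because $\Id_{\ell_2}\in\RN$ and $\RN$ is an ideal, yet is not DP.

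For $\DP\not\Rightarrow\RN$ you pick the right example ($f=\Id_S$ with $S$ the Bourgain--Rosenthal space), but the step you yourself flag as ``the main obstacle'' --- producing a bounded $\F(S)$-valued martingale sitting above the non-convergent $S$-valued one --- is exactly the missing idea, and you leave it unresolved; as stated this is a genuine gap, since $\delta_S$ is indeed nonlinear and composing a martingale with it does not yield a martingale. The resolution is the Godefroy--Kalton lifting theorem \cite{GK03}: for separable $S$ there is a bounded \emph{linear} operator $T:S\to\F(S)$ with $\beta_S\circ T=\Id_S$. Then $T\circ M_n$ is a bounded $\F(S)$-valued martingale whose image under $\beta_S$ fails to converge a.e., so $\beta_S\notin\RN$; equivalently, as in Proposition~\ref{prop:Space(operator_ideal)}, $\beta_S\in\RN$ would force $\Id_S\in\RN$, i.e.\ $S$ to have the RNP, because the RNP is separably determined. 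The same lifting is also what shows most cleanly that $\beta_{\ell_2}$ is not DP (the weakly null sequence $(Te_n)$ in $\F(\ell_2)$ is mapped to $(e_n)$), so both halves of this last group hinge on that one linear-lifting ingredient absent from your proposal.
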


For the proof we will need the following proposition of independent interest.

\begin{prop}\label{prop:Space(operator_ideal)} Let $\mathcal I$ be an operator ideal and let $Space(\mathcal I)$ be the class of Banach spaces $X$ such that $\Id_X \in \mathcal I$. 
    Then $X \in Space(\mathcal I)$ implies $\beta_X \in \mathcal I$.
    If, in addition, $Space(\mathcal I)$ is separably determined, then $\beta_X \in \mathcal I$ implies $X \in Space(\mathcal{I})$. In particular, $X$ has the SP if and only if $\beta_X$ is DP, and $X$ has the RNP if and only if $\beta_X$ is RN.
\end{prop}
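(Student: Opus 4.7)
My approach is to leverage Godefroy--Kalton's theorem: for any separable Banach space $X$, the barycenter map $\beta_X$ admits a linear isometric right inverse $E_X : X \to \mathcal{F}(X)$ (so that $\Id_X = \beta_X \circ E_X$). Together with the functorial square $\beta_X \circ \widehat{i} = i \circ \beta_Y$ coming from an inclusion $i: Y \hookrightarrow X$ and the ideal property of $\mathcal{I}$, this yields both directions. The first implication is essentially bookkeeping; the second requires a small amount of care in the non-separable case.

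For the first implication, I would simply observe that if $\Id_X \in \mathcal{I}$ then $\beta_X = \Id_X \circ \beta_X$ lies in $\mathcal{I}$ because operator ideals are closed under composition with bounded operators on either side. For the second implication, I would first treat the separable case: when $X$ is separable, Godefroy--Kalton's factorization $\Id_X = \beta_X \circ E_X$ immediately forces $\Id_X \in \mathcal{I}$ as soon as $\beta_X \in \mathcal{I}$. For general $X$, I would invoke separable determination of $Space(\mathcal{I})$ and fix an arbitrary separable subspace $Y \subset X$. The inclusion $i: Y \hookrightarrow X$ lifts to a linear isometric embedding $\widehat{i}: \mathcal{F}(Y) \to \mathcal{F}(X)$ satisfying $\beta_X \circ \widehat{i} = i \circ \beta_Y$. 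Composing on the right with $E_Y$ gives the factorization $i = i \circ \Id_Y = i \circ \beta_Y \circ E_Y = \beta_X \circ \widehat{i} \circ E_Y$, which belongs to $\mathcal{I}$ by the ideal property.

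The final, and most delicate, step is to pass from ``the isometric embedding $i:Y \hookrightarrow X$ belongs to $\mathcal{I}$'' to ``$\Id_Y \in \mathcal{I}$''. In general this requires $\mathcal{I}$ to be \emph{injective} in Pietsch's sense, and this is the main obstacle at the abstract level. However, for the ``in particular'' cases the property is transparent from the sequential and martingale definitions recalled in Section~\ref{ss:OperatorIdeals}: if $i$ is Dunford--Pettis and $(y_n) \subset Y$ is weakly null, then $(i(y_n))$ is weakly null in $X$ (Hahn--Banach) and $\|y_n\| = \|i(y_n)\| \to 0$; analogously, if $i$ is Radon--Nikod\'ym, then any $L_1(Y)$-bounded uniformly integrable martingale is transported by $i$ to an $L_1(X)$-bounded uniformly integrable martingale whose convergence in $L_1(X)$ (via Lemma~\ref{Lemma:RN via unif integrable martingales}) is equivalent to convergence in $L_1(Y)$ because $i$ is isometric. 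Combined with the classical fact that both the Schur property and the RNP are separably determined, this yields the two concrete equivalences.
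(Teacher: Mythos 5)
Your proposal follows the same route as the paper's proof: the trivial factorization $\beta_X=\Id_X\circ\beta_X$ for the first implication and, for the converse, separable determination combined with the Godefroy--Kalton isometric lifting $T:Y\to\F(Y)$ and the commutative square $\beta_X\circ\widehat{\iota}=\iota\circ\beta_Y$, which places the inclusion $\iota:Y\hookrightarrow X$ of an arbitrary separable subspace into $\mathcal I$. The only point of divergence is the final step. The paper writes $\Id_Y=\iota^{-1}\circ\beta_X\circ\widehat{\iota}\circ T$ and invokes the ideal property, with $\iota^{-1}:\iota(Y)\to Y$ bounded; since $\iota^{-1}$ is defined only on the range $\iota(Y)$ and not on all of $X$, what is really being used is that an operator of $\mathcal I$ stays in $\mathcal I$ when its codomain is restricted to a closed subspace containing the range --- which is exactly the injectivity of $\mathcal I$ that you isolate. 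So your remark that passing from ``$\iota\in\mathcal I$'' to ``$\Id_Y\in\mathcal I$'' requires injectivity in general is not a defect of your argument but a hypothesis the paper's proof leaves implicit, and your direct verifications that $\DP$ and $\RN$ are injective (weak nullity plus norms, respectively $L_1$-convergence of martingales, are detected after composing with an isometric embedding) correctly close the argument for the two instances the proposition is actually applied to. In short, the proposal is correct and essentially identical to the paper's proof, with a more scrupulous treatment of the last composition; if one wanted the abstract statement for arbitrary (possibly non-injective) ideals, both your argument and the paper's would need the additional injectivity assumption made explicit.
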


\begin{proof}
        The first implication follows since $\beta_X=\Id_X\circ \beta_X$ and $\mathcal I$ is an operator ideal.

        To see the second implication, it is enough to show that every separable subspace $Y$ of $X$ belongs to $Space(\mathcal I)$. 
       Fix a separable subspace $Y$ of $X$ and observe the following commutative diagram 
        $$\xymatrix{
	\F(Y) \ar[r]^{\widehat{\iota}} \ar[d]_{\beta_Y}  & \F(X) \ar[d]^{\beta_X} \\
	Y \ar[r]_{{\iota}} & X,
}
$$
where $\iota : Y \hookrightarrow X$ is the canonical linear embedding, and $\beta_X$ and $\beta_Y$ are linear quotient maps which are the left inverses of $\delta_X$ and $\delta_Y$, respectively. 
That is, $\iota \circ \beta_Y = \beta_X \circ \widehat{\iota}$. Write $E = \iota(Y)$ and notice from \cite[Theorem 3.1]{GK03} that there exists a Godefroy-Kalton isometric linear lifting $T: Y \rightarrow \mathcal{F}(Y)$ of the map $\beta_Y$, i.e. 
           \[
    \Id_Y = \beta_Y \circ T.
    \]
Noting $\iota^{-1} : E \rightarrow Y$ is bounded as well, we conclude 
\[
\Id_Y = \iota^{-1} \circ \iota \circ \beta_Y \circ T = \iota^{-1} \circ \beta_X \circ \widehat{\iota} \circ T.
\]
    Since $\beta_X\in \mathcal I$, we have that $\Id_Y\in \mathcal I$, equivalently, $Y\in Space(\mathcal I)$.

 The last part of the proposition holds as SP$=Space(DP)$, RNP$=Space(RN)$ and both classes SP and RNP are separably determined.
    \end{proof}

\begin{proof}[Proof of Proposition~\ref{p:NoAnalogue}]
    Taking into account that $\overline{f}=\beta_X \circ \widehat{f}$, it follows from Theorem \ref{t:flagship} that if $f$ is curve-flat, then $\overline{f}$ is DP, RN and does not fix any copy of $L_1$. 
    The converses of the three implications are easily seen to be false. 
    Indeed, if $f:\R \to \R$ is the identity, then it is not curve-flat, while $\overline{f}:\mathcal F(\R) \to \R$ is of rank one and therefore DP, RN and not fixing copy of $L_1$. 

    It is a general fact, that neither DP nor RN operators fix copies of $L_1$.

    Next, consider $\Id_{\ell_2}: \ell_2 \to \ell_2$ the identity map. 
    Since $\ell_2$ has the RNP, the linearization $\overline{\Id_{\ell_2}}:\mathcal F(\ell_2) \to \ell_2$ is RN (and fixes no copy of $L_1$). 
    However, $\overline{\Id_{\ell_2}}$ is nothing else than the quotient map $\beta_{\ell_2}$; so it is not DP by Proposition~\ref{prop:Space(operator_ideal)}.

    Similarly, let $X$ be the Bourgain-Rosenthal space which has the SP but lacks the RNP~\cite{BR1980Israel}.
    Then $\beta_X$ is DP (and therefore fixes no copy of $L_1$) but is not RN by Proposition~\ref{prop:Space(operator_ideal)}.
\end{proof}

 {\begin{remark} 
 The last part of the above proof can be achieved by a much more elementary example.
  Let $(A_n)_{n=1}^\infty$ be an enumeration of the dyadic intervals in $[0,1]$ and define $T:L_1=L_1([0,1]) \to c_0$ by $T\varphi=(\int_{A_n}\varphi\,d\lambda)_{n=1}^\infty$. Since weakly null sequences in $L_1$ are uniformly integrable, we easily see that $T$ is DP. On the other hand, $T$ is not RN. Indeed, consider the $L_1$-valued measure $G(E)=\xi_E$, for $E$ Borel subset of $[0,1]$, which is absolutely continuous with respect to $\lambda$. Assume that $T$ is representable, so that there exists $g=(g_n)_{n=1}^\infty \in L_1([0,1];c_0)$ such that for all $E$ Borel, $T \circ G(E)=\int_Eg\,d\lambda$. Then, for each $n\in \N$, $g_n=\xi_{A_n}$ and $\limsup_n g_n(t)=1$ for any $t\in [0,1]$, which is a contradiction. Thus $T$ is not representable. Finally, identifying $L_1([0,1])$ with $\mathcal F([0,1])$, we can view $T$ as $\overline{f}$, where $f:[0,1] \to c_0$ is the Lipschitz map defined by $f(t)=(\lambda([0,t]\cap A_n))_{n=1}^\infty$. Thus we have that $\overline{f}$ is DP but not RN. 
\end{remark}
}
	
	\section{Quotient metric spaces and factorizations}
	\label{section:factorization}
	
	In general, given an operator $T\colon X\rightarrow Y$ between Banach spaces, it is clear that if $T$ factors through a Schur (respectively RNP) space, then $T$ is Dunford-Pettis (respectively Radon-Nikod\'ym). 
    However, the converse is not necessarily true.

In our context, it is still a natural question whether the conditions $\widehat{f}\in Op(SP)$ (resp. $\widehat{f}\in Op(RNP)$), i.e. ``$\widehat{f}$ factors through a space with the SP (resp. the RNP)'', can be added to the list of equivalences in Theorem \ref{t:flagship}. Taking into account the characterization of Lipschitz-free spaces with the SP (resp. the RNP), we can also ask whether the stronger condition ``$f$ factors through a complete purely 1-unrectifiable metric space" can also be included in Theorem \ref{t:flagship}. In Theorem~\ref{t:MainOfSection6}, we will characterize the compact metric spaces $M$ for which this stronger factorization condition holds, for every range space $N$. Then, we will present a concrete example showing that the answer is negative in general (Example~\ref{cor:Example_CF_not_Factp-1-u}). 
Although the ``factorization through Schur spaces'' question remains open, we will provide a few related observations in the last two subsections. 


\subsection{Factoring through p-1-u spaces: the compact case}

\sloppy In what follows, $Fact_{p1u}(M,N)$ stands for the set of those $f \in \Lip_0(M,N)$ which factor through a complete p-1-u space. 
When $M$ is compact, we can omit to stress the completeness of the p-1-u factor as that is automatic.
We always have $Fact_{p1u}(M,N) \subset \CF_0(M,N)$ as $\Id_P:P\to P$ is curve-flat for any p-1-u metric space $P$.
For maps $f \in \Lip_0(M,N)$ and $g \in \Lip_0(M,P)$ we will say that \emph{$f$ factors through $g$} if there exists $h \in \Lip_0(P,N)$ such that $f=h\circ g$.
We will denote $Fact_g(M,N)$ the set of such maps.
Given a metric space $M$, a particular role is going to be played by the metric space $M_{\lip}$ defined as the canonical quotient of $M$ where we identify the points $x,y \in M$ such that $d_{\lip}(x,y)=0$ 
where 
	$$d_{\lip}(x,y) := \sup\{|f(x)-f(y)| \, : \, f \in B_{\lip_0(M)}\}.$$
We equip $M_{\lip}$ with the metric $d_{\lip}([x],[y])=d_{\lip}(x,y)$.
Let us denote $\pi:M \to M_{\lip}$ the canonical quotient map defined by $\pi(x)=[x]$ for every $x \in M$. Clearly $\|\pi\|_L \le 1$.
It is well known and easy to see that, given any metric space $N$, every $f\in \lip_0(M,N)$ factors through $\pi:M \to M_{\lip}$. 
In other words, we always have $\lip_0(M,N) \subset Fact_{\pi:M\to M_{\lip}}(M,N)$. Since $M$ is compact, \cite[Corollary~8.13]{Weaver2} combined with \cite[Theorem A]{AGPP} imply that $M_{\lip}$ is p-1-u. Thus, we arrive at the following observation.

\begin{prop}
    \label{p:cpt-ulf}
Let $M$ be a compact metric space and let $N$ be any metric space. If $f \in \lip_0(M,N)$, then $f$ factors through $M_{\lip}$ which is compact and p-1-u. 
\end{prop}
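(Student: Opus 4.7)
The statement packages three claims about $M_{\lip}$: that $f$ factors through $\pi$, that $M_{\lip}$ is compact, and that $M_{\lip}$ is p-1-u. My plan is to verify them in that order, since the first two are essentially soft and essentially already indicated in the paragraph preceding the statement, while the third is where real mathematical content enters through external results.

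For the factorization, I would define $\tilde{f}\colon M_{\lip}\to N$ by $\tilde{f}([x]):=f(x)$ and prove well-definedness and the Lipschitz estimate in a single stroke. The key observation is that for any $g\in \Lip_0(N)$, the composition $g\circ f$ still lies in $\lip_0(M)$, since
\[
\frac{|g(f(x))-g(f(y))|}{d(x,y)} \leq \|g\|_L\cdot \frac{d_N(f(x),f(y))}{d(x,y)} \xrightarrow[d(x,y)\to 0]{} 0,
\]
with $\|g\circ f\|_L \leq \|g\|_L\|f\|_L$. Combining this with the fact that $\Lip_0(N)$ is norming for $d_N$ (via the canonical isometry $\delta_N\colon N\to \Lip_0(N)^*$), I get
\[
d_N(f(x),f(y)) \;=\; \sup_{g\in B_{\Lip_0(N)}} |g(f(x))-g(f(y))| \;\leq\; \|f\|_L\, d_{\lip}(x,y).
\]
Hence $d_{\lip}(x,y)=0$ forces $f(x)=f(y)$, making $\tilde{f}$ well-defined, and the same inequality gives $\|\tilde{f}\|_L \leq \|f\|_L$ with $\tilde{f}([0])=0$; by construction $f=\tilde{f}\circ \pi$.

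Compactness of $M_{\lip}$ is immediate: $\pi$ is $1$-Lipschitz (each function in $B_{\lip_0(M)}$ is $1$-Lipschitz, so $d_{\lip}\leq d$), hence continuous and surjective onto $M_{\lip}$, so compactness transfers from $M$. For the pure 1-unrectifiability, I would simply invoke the combination of \cite[Corollary~8.13]{Weaver2} and \cite[Theorem~A]{AGPP} already pointed out in the excerpt: the former ensures that $\lip_0(M_{\lip})$ separates points of $M_{\lip}$ uniformly (in the sense that gives back the metric $d_{\lip}$), and the latter characterizes such compact spaces as being exactly the p-1-u ones. This last step is where the genuine difficulty lies; from scratch it would require reproving Theorem~A of \cite{AGPP}, which I would not attempt here and would instead take as a black box.
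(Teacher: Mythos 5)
Your proof is correct and takes essentially the same route as the paper, which treats the factorization of $f\in\lip_0(M,N)$ through $\pi:M\to M_{\lip}$ as ``well known and easy to see'' and deduces pure 1-unrectifiability of $M_{\lip}$ from exactly the same combination of \cite[Corollary~8.13]{Weaver2} and \cite[Theorem~A]{AGPP} that you invoke. Your duality argument (using that $g\circ f\in\lip_0(M)$ with $\|g\circ f\|_L\le\|g\|_L\|f\|_L$ for $g\in B_{\Lip_0(N)}$, plus the norming property of $B_{\Lip_0(N)}$) correctly fills in the well-definedness and Lipschitz estimate that the paper leaves implicit, and the compactness observation via the $1$-Lipschitz surjection $\pi$ is likewise correct.
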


On the other hand, it follows from the results in~\cite{AGPP} that, for compact $M$, given any p-1-u metric space $P$, every $f \in \Lip_0(M,P)$ factors through $\pi:M \to M_{\lip}$.

\begin{lem}\label{l:p-1-uTarget}
    Let $M$ be compact and $P$ be p-1-u. 
    Let $f\in \Lip_0(M,P)$.
    Then $f$ factors through $\pi:M\to M_{\lip}$.
    More precisely, there exists $f_{\lip} \in \Lip_0(M_{\lip},P)$ such that $\norm{f_{\lip}}_L\leq \norm{f}_L$ and $f=f_{\lip}\circ \pi$.
\end{lem}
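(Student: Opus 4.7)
The plan is to establish the inequality
\[
d_P(f(x), f(y)) \leq \|f\|_L \cdot d_{\lip}(x, y) \quad \text{for all } x, y \in M.
\]
Once this is done, the assignment $f_{\lip}([x]) := f(x)$ is a well-defined, basepoint-preserving map on $M_{\lip}$ (since $d_{\lip}(x,y) = 0$ forces $f(x) = f(y)$), is Lipschitz of norm at most $\|f\|_L$, and satisfies $f = f_{\lip} \circ \pi$ by construction.

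I would first reduce to a compact target: since $M$ is compact, $Q := f(M)$ is a compact subspace of $P$, and as a subspace of a purely $1$-unrectifiable space, $Q$ is itself purely $1$-unrectifiable. The McShane--Whitney extension theorem gives
\[
d_P(p,q) = d_Q(p,q) = \sup\{|h(p) - h(q)| : h \in B_{\Lip_0(Q)}\} \quad (p,q\in Q),
\]
so it suffices to bound $|h(f(x)) - h(f(y))|$ by $\|f\|_L \cdot d_{\lip}(x,y)$ for each $h \in B_{\Lip_0(Q)}$.

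The key step is to approximate $h$ by uniformly locally flat functions. By Theorem~A of~\cite{AGPP}, since $Q$ is compact and p-1-u, the set $B_{\lip_0(Q)}$ is weak$^*$-dense in $B_{\Lip_0(Q)}$. Because weak$^*$ convergence agrees with pointwise convergence on norm-bounded subsets of $\Lip_0(Q)$, any such $h$ is the pointwise limit of some net $(h_\alpha) \subset B_{\lip_0(Q)}$. A routine verification shows that composition with $f$ propagates uniform local flatness: if $h_\alpha$ is $\eta$-flat at radius $\theta$ on $Q$, then $h_\alpha \circ f$ is $\eta \|f\|_L$-flat at radius $\theta/\|f\|_L$ on $M$. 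Thus $h_\alpha \circ f \in \lip_0(M)$ with $\|h_\alpha \circ f\|_L \le \|f\|_L$, and the very definition of $d_{\lip}$ yields
\[
|h_\alpha(f(x)) - h_\alpha(f(y))| \le \|h_\alpha \circ f\|_L \cdot d_{\lip}(x,y) \le \|f\|_L \cdot d_{\lip}(x,y).
\]
Passing to the pointwise limit in $\alpha$ and then taking the supremum over $h \in B_{\Lip_0(Q)}$ gives the desired inequality.

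The main obstacle is really the invocation of Theorem~A of~\cite{AGPP} in its isometric form (weak$^*$ density of $B_{\lip_0(Q)}$ in $B_{\Lip_0(Q)}$ for compact p-1-u $Q$); everything else is soft, combining the duality formula for $d_P$, the stability of $\lip_0$ under composition with Lipschitz maps, and the duality defining $d_{\lip}$.
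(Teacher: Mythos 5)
Your proof is correct and takes essentially the same route as the paper: both arguments rest on Theorem~A of \cite{AGPP} for the target space together with the observation that composing a little-Lipschitz function with $f$ gives an element of $\lip_0(M)$ of norm at most $\|f\|_L$, which yields $d(f(x),f(y))\le \|f\|_L\, d_{\lip}(x,y)$ and hence that $f_{\lip}([x]):=f(x)$ is well defined with $\|f_{\lip}\|_L\le\|f\|_L$. The only (harmless, and in fact slightly more careful) deviation is that you first pass to the compact image $f(M)$ and use the weak$^*$-density formulation of Theorem~A there, whereas the paper invokes the uniform-separation formula for $B_{\lip_0(P)}$ on $P$ directly.
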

\begin{proof}
    This follows from \cite[Proposition 5.8]{AGPP} and~\cite[Theorem 5.9]{AGPP}.
    Here we provide a more direct proof:
    We define $f_{\lip}:M_{\lip} \to P$ by $f_{\lip}([x])=f(x)$.
    Let $x,y \in M$.
    We have 
    \[
    \begin{aligned}
        d(f_{\lip}([x]),f_{\lip}([y]))&=d(f(x),f(y))\\
        &= \sup\set{\varphi(f(x))-\varphi(f(y)):\varphi \in B_{\lip_0(P)}}\\
        &\leq \norm{f}_L d_{\lip}([x],[y])
    \end{aligned}
    \]
    where we have used~\cite[Theorem A]{AGPP} and the fact that $\varphi \circ f \in \lip_0(M)$. 
    This inequality shows at once that $f_{\lip}$ is well defined and that $\norm{f_{\lip}}_L\leq \norm{f}_L$.
\end{proof}

\begin{prop}\label{p:p-1-u-factorization-characterization}
    Let $M$ be a compact metric space and let $N$ be any metric space. Then 
     \[\displaystyle Fact_{p1u}(M,N)=Fact_{\pi:M\to M_{\lip}}(M,N).\]
    
 \sloppy   Moreover, there exists a canonical bijective correspondence between this set and $\Lip_0(M_{\lip},N)$. 
   Namely, the pre-composition operator $C_\pi: \Lip_0(M_{\lip},N) \to Fact_{p1u}(M,N)$, given by $u \mapsto u \circ \pi$, establishes this correspondence.
   It moreover satisfies $\norm{C_\pi(u)}_L\leq \norm{u}_L$ for every $u \in \Lip_0(M_{\lip},N)$ and 
   \[ \forall f \in Fact_{p1u}(M,N), \quad  \norm{C_\pi^{-1}(f)}_L=\inf\norm{h}_L\norm{g}_L,\] 
    where the infimum is taken over all factorizations $f=h\circ g$ through a complete p-1-u metric space.

\end{prop}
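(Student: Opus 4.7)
The plan is to use Lemma~\ref{l:p-1-uTarget} as the main tool: it tells us that any Lipschitz map from $M$ into any p-1-u space canonically factors through the quotient map $\pi:M\to M_{\lip}$ with no loss in the Lipschitz constant. Because $M$ is compact, the quotient $M_{\lip}$ is itself compact and p-1-u (as noted just before Proposition~\ref{p:cpt-ulf}), so it is the ``universal'' complete p-1-u target for $M$.

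For the equality of sets, one inclusion is immediate: every map $f=u\circ\pi$ factors through the complete p-1-u space $M_{\lip}$, so $Fact_{\pi}(M,N)\subseteq Fact_{p1u}(M,N)$. For the reverse inclusion, I would take $f=h\circ g$ with $g\in \Lip_0(M,P)$, $h\in \Lip_0(P,N)$ and $P$ complete p-1-u; applying Lemma~\ref{l:p-1-uTarget} to $g$ gives $g_{\lip}\in\Lip_0(M_{\lip},P)$ with $g=g_{\lip}\circ\pi$, hence $f=(h\circ g_{\lip})\circ \pi\in Fact_{\pi}(M,N)$, and moreover $\|h\circ g_{\lip}\|_L\leq \|h\|_L\|g_{\lip}\|_L\leq \|h\|_L\|g\|_L$. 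This already contains the nontrivial inequality for the norm identity.

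For the bijective correspondence via $C_\pi$: the map is well-defined into $Fact_{p1u}(M,N)$ since $u\circ\pi$ manifestly factors through the p-1-u space $M_{\lip}$, and $\|u\circ\pi\|_L\leq \|u\|_L\|\pi\|_L\leq\|u\|_L$. Surjectivity is exactly the inclusion $Fact_{p1u}(M,N)\subseteq Fact_{\pi}(M,N)$ proved above. Injectivity follows because $\pi:M\to M_{\lip}$ is surjective by construction, so $u$ is uniquely determined by $u\circ\pi$.

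Finally, for the norm formula, write $u:=C_\pi^{-1}(f)$. The factorization $f=u\circ\pi$ through the complete p-1-u space $M_{\lip}$ (with $\|\pi\|_L\leq 1$) gives the upper bound $\inf\|h\|_L\|g\|_L \leq \|u\|_L$. Conversely, for any factorization $f=h\circ g$ through a complete p-1-u space $P$, the argument of the preceding paragraph produces $u'=h\circ g_{\lip}$ with $f=u'\circ\pi$ and $\|u'\|_L\leq \|h\|_L\|g\|_L$; by injectivity $u'=u$, so $\|u\|_L\leq \|h\|_L\|g\|_L$, and taking the infimum gives the reverse inequality. Every step is essentially bookkeeping around Lemma~\ref{l:p-1-uTarget}; the only subtle point to state clearly is that compactness of $M$ is what guarantees $M_{\lip}$ itself is p-1-u and lies in the class over which the infimum is taken, so that $C_\pi$ lands exactly in $Fact_{p1u}(M,N)$.
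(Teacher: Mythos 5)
Your proposal is correct and follows essentially the same route as the paper: both inclusions are obtained exactly as in the paper's proof (the nontrivial one via Lemma~\ref{l:p-1-uTarget} applied to the p-1-u-valued factor $g$), injectivity of $C_\pi$ comes from surjectivity of $\pi$, and the two inequalities in the norm identity are established by the same bookkeeping with $\|\pi\|_L\leq 1$ and $\|g_{\lip}\|_L\leq\|g\|_L$. No gaps; the remark that compactness of $M$ makes $M_{\lip}$ compact (hence complete) and p-1-u is precisely the point the paper also relies on.
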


\begin{proof}

Assume first that $f\in \Lip_0 (M,N)$ factors through $\pi:M \to M_{\lip}$. As mentioned before Proposition \ref{p:cpt-ulf}, the metric space $M_{\lip}$ is p-1-u; so $f \in Fact_{p1u}(M,N)$. 

Assume now $f \in Fact_{p1u}(M,N)$ and that $P$ is a complete p-1-u metric space such that $f=h\circ g$, where $g\in \Lip_0(M,P)$ and $h\in \Lip_0(P,N)$. Since $M$ is compact, we can write $f=h\circ g_{\lip}\circ \pi$ where $\norm{g_{\lip}}_L\leq \norm{g}_L$ by Lemma~\ref{l:p-1-uTarget}.
Thus $f \in Fact_{\pi:M\to M_{\lip}}(M,N)$.

\sloppy Now observe that, since $\pi:M \to M_{\lip}$ is surjective, the map $C_\pi:u \in \Lip_0(M_{\lip},N) \mapsto u \circ \pi \in Fact_{p1u}(M,N)$ is injective and $\norm{C_\pi(u)}_L\leq \norm{u}_L$, since $\|\pi\|_L \leq 1$. The argument above shows that $C_\pi$ is also surjective with
$C_\pi^{-1}(f)=h\circ g_{\lip}$ with $\norm{C_\pi^{-1}(f)}_L\leq \norm{h}_L\norm{g}_L$.
The opposite inequality is clear as $f=C_\pi^{-1}(f)\circ \pi$ and $\|\pi\|_L \leq 1$.
\end{proof}

As we mentioned in Section~\ref{ss:CF}, the space $\CF_0 (M,Y)$ is a Banach space. Thus, the following is obtained as a consequence of Proposition~\ref{p:p-1-u-factorization-characterization} and the open mapping theorem.

\begin{cor}\label{cor:CompositionIsomorphism}
    Let $M$ be a compact metric space and $Y$ be a Banach space. Then $C_\pi: \Lip_0(M_{\lip},Y) \to Fact_{p1u}(M,Y)$ is linear, bounded and pointwise-to-pointwise continuous. If moreover  $\CF_0(M,Y) \subset Fact_{p1u}(M,Y)$, then $C_\pi$ is an isomorphism from $\Lip_0(M_{\lip},Y)$ into $\CF_0(M,Y)$.
\end{cor}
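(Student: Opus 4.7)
The plan is to break the corollary into the two parts stated, treating the first one as an essentially formal verification and the second one as a direct application of the open mapping theorem, given the bijection already set up in Proposition~\ref{p:p-1-u-factorization-characterization}.

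For the first part, linearity of $C_\pi$ is immediate from the definition $C_\pi(u)=u\circ\pi$. Boundedness with $\|C_\pi(u)\|_L\leq\|u\|_L$ was already recorded in Proposition~\ref{p:p-1-u-factorization-characterization} (it uses $\|\pi\|_L\leq 1$). For pointwise-to-pointwise continuity, if $u_\alpha\to u$ pointwise on $M_{\lip}$, then for every $x\in M$, $C_\pi(u_\alpha)(x)=u_\alpha(\pi(x))\to u(\pi(x))=C_\pi(u)(x)$, so the continuity is really just the universal property of the quotient at the level of points.

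For the second part, I first want to record the (observation that is implicit but not explicitly stated in the excerpt) inclusion $Fact_{p1u}(M,Y)\subset \CF_0(M,Y)$. Indeed, if $f=h\circ g$ with $g\in\Lip_0(M,P)$, $h\in\Lip_0(P,Y)$ and $P$ complete p-1-u, then for any Lipschitz curve $\gamma:K\to M$, the map $g\circ\gamma:K\to P$ is Lipschitz, so $\H^1(g\circ\gamma(K))=0$ by p-1-unrectifiability of $P$, whence $\H^1(f\circ\gamma(K))\leq \|h\|_L\cdot\H^1(g\circ\gamma(K))=0$. Hence $f\in\CF_0(M,Y)$ by Proposition~\ref{prop:curveflat}(ii). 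Combining with the standing hypothesis $\CF_0(M,Y)\subset Fact_{p1u}(M,Y)$ yields the equality $Fact_{p1u}(M,Y)=\CF_0(M,Y)$.

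Now Proposition~\ref{p:p-1-u-factorization-characterization} states that $C_\pi$ is a bijection from $\Lip_0(M_{\lip},Y)$ onto $Fact_{p1u}(M,Y)$. Under the extra hypothesis this means $C_\pi$ is a bounded linear bijection from the Banach space $\Lip_0(M_{\lip},Y)$ onto $\CF_0(M,Y)$, which is itself a Banach space since it is a closed subspace of $\Lip_0(M,Y)$ (as noted in Section~\ref{ss:CF}). The open mapping theorem then upgrades $C_\pi$ to a linear isomorphism, which is the desired conclusion. No step appears to involve a serious obstacle; the only point to watch is that the closedness of $\CF_0(M,Y)$ is needed in order to invoke the open mapping theorem, and that $Fact_{p1u}(M,Y)\subset\CF_0(M,Y)$ (the easy direction) must be checked, since the hypothesis only provides the reverse containment.
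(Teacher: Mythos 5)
Your proposal is correct and follows essentially the same route as the paper: the bijection and norm estimate from Proposition~\ref{p:p-1-u-factorization-characterization}, the always-true inclusion $Fact_{p1u}(M,Y)\subset\CF_0(M,Y)$, the completeness of $\CF_0(M,Y)$, and the open mapping theorem. Your explicit verifications of the pointwise-to-pointwise continuity and of the easy inclusion merely spell out what the paper dispatches with brief remarks, so there is nothing substantively different.
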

\begin{proof}
    The first part of the corollary follows immediately from Proposition~\ref{p:p-1-u-factorization-characterization}. If $\CF_0(M,Y) \subset Fact_{p1u}(M,Y)$ then $\CF_0(M,Y)=Fact_{p1u}(M,Y)$, since the reverse inclusion is always true.
    We have that $\norm{C_\pi}\leq \norm{\pi}_L$ and
    Proposition~\ref{p:p-1-u-factorization-characterization} claims that this map is a bijection from $\Lip_0(M_{\lip},Y)$ to $Fact_{p1u}(M,Y)$.
    Thus, by the open mapping theorem, $C_\pi$ is an isomorphism.
\end{proof}

Let us turn to conditions on $M$ guaranteeing $\CF_0(M,N)\subset Fact_{p1u}(M,N)$. The next theorem shows that when looking for curve-flat functions which do not factor through a complete p-1-u space, it is enough to consider functions with real values.
Also, this theorem relates the inclusion $\CF_0(M,N) \subset Fact_{p1u}(M,N)$ to the condition that the following natural quotient metric space is p-1-u. Let $(M,d)$ be a compact metric space and define 
\[
	\forall x, y \in M, \quad d_{\CF}(x,y) :=  \sup\{|f(x)-f(y)| \, : \, f \in B_{\CF_0(M)}\}.
\]
We denote  $M_{\CF}$ the canonical quotient space of $M$ where we identify the points $x, y \in M$ such that $d_{\CF} (x,y) =0$, which we equip with the metric $d_{\CF} ([x],[y]) = d_{\CF} (x,y)$.
While, on the first sight, the metric space $M_{\CF}$ seems to be defined using behavior of Lipschitz functions on $M$, in fact it admits a fully metric definition. This has been proved in~\cite[Proposition 5.22]{AGPP}. In particular, the metric spaces $M$ such that $M_{\CF}$ is p-1-u are those where the iterative process of ``removing curve fragments'' stabilizes after at most 1 step.

\begin{thm}\label{t:MainOfSection6} 
    Let $M$ be a compact metric space. Then {the following assertions are equivalent.}
    \begin{enumerate}
        \item $\CF_0(M) \subset Fact_{p1u}(M,\Real)$,
        \item $\CF_0(M,N) \subset Fact_{p1u}(M,N)$ for every metric space $N$,
        \item The metric space $M_{\CF}$ is compact p-1-u. 
        \item The metric spaces $M_{\CF}$ and $M_{\lip}$ are Lipschitz equivalent.
        \item $B_{\CF_0(M)} \subset \alpha\overline{B_{\lip_0(M)}}^{w^*}$ for some $\alpha>0$.
        \item $\CF_0(M)\subset\overline{\lip_0(M)}^{w^*}$.
    \end{enumerate}
    Moreover, under any of these conditions, the constant $\alpha$ in (5) can be taken equal 1 and the Lipschitz equivalence in (4) is an isometry. Also the inclusions in items (1), (2), (5) with $\alpha=1$ and (6) are equalities.
\end{thm}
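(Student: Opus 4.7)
The plan is to arrange the six equivalences around the cycle $(3)\Rightarrow(5)\Rightarrow(6)\Rightarrow(1)\Rightarrow(3)$, supplemented by $(1)\Leftrightarrow(2)$ and $(3)\Leftrightarrow(4)$, and then to extract the ``moreover'' claims. The easy implications come first. $(2)\Rightarrow(1)$ is trivial, and $(5)\Rightarrow(6)$ follows because $\lip_0(M)$ is a subspace: if $f\in\CF_0(M)$, then $f/\|f\|_L\in B_{\CF_0(M)}\subset \alpha\overline{B_{\lip_0(M)}}^{w^*}\subset \overline{\lip_0(M)}^{w^*}$. For $(6)\Rightarrow(1)$: any $f\in\CF_0(M)$ is a weak$^*$, hence pointwise, limit of a net in $\lip_0(M)$; each net element is constant on $\pi$-fibres by definition of $d_{\lip}$, so $f$ is too, and Proposition~\ref{p:p-1-u-factorization-characterization} gives $f\in \mathrm{Fact}_{p1u}(M,\Real)$. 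For $(1)\Rightarrow(2)$: given $f\in\CF_0(M,N)$, every $g\circ f$ with $g\in\Lip_0(N)$ belongs to $\CF_0(M)$ by Proposition~\ref{prop:curveflat}(iv) and so factors through $\pi$ by $(1)$; since $\Lip_0(N)$ separates points of $N$, $f$ itself is constant on $\pi$-fibres, hence $f=f_{\lip}\circ\pi$, with $\|f_{\lip}\|_L\leq K\|f\|_L$ where $K$ is the norm of $C_\pi^{-1}$ provided by Corollary~\ref{cor:CompositionIsomorphism}.

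For $(1)\Rightarrow(3)$: Corollary~\ref{cor:CompositionIsomorphism} makes $C_\pi:\Lip_0(M_{\lip})\to\CF_0(M)$ an isomorphism under $(1)$, so $\|u\|_L\leq K\|u\circ\pi\|_L$ for some $K>0$. Writing $f=u\circ\pi\in B_{\CF_0(M)}$, we get $|f(x)-f(y)|\leq \|u\|_L\, d_{\lip}(x,y)\leq K\, d_{\lip}(x,y)$; taking the supremum over $f$ yields $d_{\CF}\leq K\, d_{\lip}$ on $M$. Combined with the always-true $d_{\lip}\leq d_{\CF}$, this forces the canonical surjection $M_{\CF}\to M_{\lip}$ (sending $[x]_{\CF}\mapsto [x]_{\lip}$, which is well-defined because $d_{\lip}\leq d_{\CF}$) to be a bi-Lipschitz bijection; since $M_{\lip}$ is compact p-1-u (essentially Proposition~\ref{p:cpt-ulf}), so is $M_{\CF}$, establishing both $(3)$ and $(4)$. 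Conversely $(4)\Rightarrow(3)$ is immediate, as Lipschitz equivalence preserves the p-1-u property.

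The main technical step is $(3)\Rightarrow(5)$ with $\alpha=1$. Assume $M_{\CF}$ is compact p-1-u. By \cite[Theorem~A]{AGPP} applied to $M_{\CF}$, we have $d_{\lip,M_{\CF}}=d_{M_{\CF}}$; by Weaver's framework (see~\cite{Weaver2}) this ensures that $\lip_0(M_{\CF})$ is a $1$-norming isometric predual of $\F(M_{\CF})$, so that, by Goldstine's theorem applied to $\lip_0(M_{\CF})^{**}=\Lip_0(M_{\CF})$, $B_{\lip_0(M_{\CF})}$ is weak$^*$-dense in $B_{\Lip_0(M_{\CF})}$. Now any $f\in B_{\CF_0(M)}$ is constant on $\pi_{\CF}$-fibres (where $\pi_{\CF}:M\to M_{\CF}$ is the canonical quotient map) by definition of $d_{\CF}$, and descends to some $\tilde f\in B_{\Lip_0(M_{\CF})}$ (its norm is at most $1$ because $|f(x)-f(y)|\leq d_{\CF}(x,y)$). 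Approximating $\tilde f$ weak$^*$ by a net $\tilde f_i\in B_{\lip_0(M_{\CF})}$, the compositions $\tilde f_i\circ \pi_{\CF}$ lie in $B_{\lip_0(M)}$ (since $\pi_{\CF}$ is $1$-Lipschitz and the composition of a $1$-Lipschitz map with a little Lipschitz function is little Lipschitz) and, by the weak$^*$-continuity of $C_{\pi_{\CF}}=\hat{\pi}_{\CF}^{*}$, they converge weak$^*$ to $f$. Hence $f\in \overline{B_{\lip_0(M)}}^{w^*}$, which is $(5)$ with $\alpha=1$.

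With $(5)$ holding for $\alpha=1$, the ``moreover'' claims fall out. Evaluating the ball-inclusion on molecules $m_{x,y}$ gives $d_{\CF}(x,y)\leq d_{\lip}(x,y)$ for every $x,y\in M$; combined with the always-true reverse inequality, $d_{\CF}=d_{\lip}$ on $M$, so the canonical surjection $M_{\CF}\to M_{\lip}$ is in fact an isometry. The inclusions in $(1), (2), (5), (6)$ become equalities because the reverse inclusions always hold: $\mathrm{Fact}_{p1u}(M,N)\subset \CF_0(M,N)$ (Lipschitz maps through a p-1-u space are curve-flat, via the Area Formula), $\lip_0(M)\subset \CF_0(M)$, and $\CF_0(M)$ as well as $B_{\CF_0(M)}$ are weak$^*$-closed by Proposition~\ref{p:CFwsClosed}, forcing $\overline{\lip_0(M)}^{w^*}\subset \CF_0(M)$ and $\overline{B_{\lip_0(M)}}^{w^*}\subset B_{\CF_0(M)}$. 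The main obstacle I anticipate is the appeal to Weaver's duality in the third paragraph: carefully justifying that the metric equality $d_{\lip,M_{\CF}}=d_{M_{\CF}}$ furnished by \cite[Theorem~A]{AGPP} upgrades to the full ball-level weak$^*$-density $\overline{B_{\lip_0(M_{\CF})}}^{w^*}=B_{\Lip_0(M_{\CF})}$ (equivalently, that $\lip_0(M_{\CF})$ is $1$-norming for all of $\F(M_{\CF})$, not merely for its molecules) is the delicate technical point on which the argument for the sharp constant $\alpha=1$ rests.
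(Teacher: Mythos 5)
Most of your individual implications are sound, and your route $(3)\Rightarrow(5)$ with $\alpha=1$ (descending $f\in B_{\CF_0(M)}$ to a $1$-Lipschitz function on $M_{\CF}$ and using weak$^*$-density of $B_{\lip_0(M_{\CF})}$ in $B_{\Lip_0(M_{\CF})}$ for the compact p-1-u space $M_{\CF}$, which is \cite[Theorem A]{AGPP}, reproved here as Proposition~\ref{p:Eriksson-Bique}) is a legitimate and in some ways cleaner path to the sharp constant than the paper's, which first gets $(1)\Rightarrow(5)$ with $\alpha=\norm{C_\pi^{-1}}$ and recovers $\alpha=1$ only afterwards. But your step $(6)\Rightarrow(1)$ has a genuine gap, and it sits exactly where the real difficulty of the theorem lies. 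From $f\in\overline{\lip_0(M)}^{w^*}$ you only obtain $f$ as a pointwise limit of a net in $\lip_0(M)$ that need \emph{not} be bounded: the weak$^*$-closure of the subspace $\lip_0(M)$ can be strictly larger than $\bigcup_n n\,\overline{B_{\lip_0(M)}}^{w^*}$, and this is precisely the difference between items (5) and (6). Hence all you can conclude is that $f(x)=f(y)$ whenever $d_{\lip}(x,y)=0$, i.e.\ $f$ is constant on $\pi$-fibres. That does not place $f$ in $Fact_{\pi:M\to M_{\lip}}(M,\Real)$: Proposition~\ref{p:p-1-u-factorization-characterization} requires a Lipschitz factor $h\in\Lip_0(M_{\lip})$, i.e.\ an estimate $\abs{f(x)-f(y)}\leq C\, d_{\lip}(x,y)$, and constancy on fibres provides no such bound (the induced map on $M_{\lip}$ need not be Lipschitz for $d_{\lip}$). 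With this step removed your cycle $(3)\Rightarrow(5)\Rightarrow(6)\Rightarrow(1)\Rightarrow(3)$ does not close, so items (3)--(6) are never shown to imply (1)--(2).

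The missing quantitative upgrade is what the paper supplies by the Petun{\={\i}}n--Pl{\={\i}}{\v{c}}ko theorem \cite{Petunin74}: since $\lip_0(M)$ is separable and consists of norm-attaining functionals on $\F(M)$ (\cite[Lemma~2.3]{DaletCompact}), one gets $\overline{\lip_0(M)}^{w^*}\equiv\lip_0(M)^{**}$ isometrically, and Goldstine then yields $(6)\Rightarrow(5)$ with $\alpha=1$; this cannot be dispatched by the soft fibre argument, as it is exactly the content forcing the sharp constant. A structural repair of your architecture would be to prove $(5)\Rightarrow(1)$ directly (a bounded net in $\alpha B_{\lip_0(M)}$ converging pointwise to $f$ gives $\abs{f(x)-f(y)}\leq\alpha\norm{f}_L\,d_{\lip}(x,y)$, hence a Lipschitz factorization through $M_{\lip}$), which closes the cycle among (1)--(5); but (6) would still need a Petun{\={\i}}n--Pl{\={\i}}{\v{c}}ko-type argument to re-enter the equivalences. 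Everything else --- $(1)\Leftrightarrow(2)$, $(1)\Rightarrow(3),(4)$ via the open-mapping constant of Corollary~\ref{cor:CompositionIsomorphism}, $(4)\Rightarrow(3)$, and your derivation of the ``moreover'' claims from (5) with $\alpha=1$ together with Proposition~\ref{p:CFwsClosed} --- is correct; note also that the delicate point you flag at the end (ball-level density for $M_{\CF}$) is in fact covered by \cite[Theorem A]{AGPP}, whereas the truly delicate point is the one at $(6)\Rightarrow(1)$ that your argument glosses over.
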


\begin{proof}
(5) $\implies$ (4). Recalling the definition of $d_{\lip}$ and $d_{CF}$, observe that, under assumption (5), we have that for $x \neq y \in M$
\[
d_{\lip} (x,y) \leq d_{\CF} (x,y) \leq \alpha\, d_{\lip} (x,y). 
\]
This implies that $M_{\CF}$ and $M_{\lip}$ are bi-Lipschitz equivalent.  

(4) $\implies$ (3). 
Indeed, by Proposition \ref{p:cpt-ulf}, $M_{\CF}$ is compact and p-1-u. 

(3) $\implies$ (2) is clear, as for any metric space $N$, any $f \in \CF_0 (M,N)$ factors through $M_{\CF}$ in a canonical way. 

As (2) $\implies$ (1) is trivial, we claim that (1) $\implies$ (5). Let $f \in B_{\CF_0} (M)$ be given. By Corollary~\ref{cor:CompositionIsomorphism} there is $u \in \Lip_0(M_{\lip})$ such that $\norm{u}_L\leq \norm{C_\pi^{-1}}$ and $C_\pi(u)=f$.
    Let $\alpha:=\norm{C_\pi^{-1}}$.
    Since $M_{\lip}$ is compact p-1-u, we have $\lip_0(M_{\lip})^{**}=\Lip_0(M_{\lip})$.
    So there is a sequence $(u_n)\subset \alpha B_{\lip_0(M_{\lip})}$ which converges pointwise to $u$.
    Thus, by Proposition~\ref{p:p-1-u-factorization-characterization}, the sequence $u_n\circ \pi$ converges pointwise (and, being bounded, also weakly$^*$) to $u\circ \pi=f$ which concludes the proof as $u_n\circ \pi \in \lip_0(M)$.

Further (5) clearly implies (6). In order to prove (6) $\Rightarrow$ (5), observe that Petun{\={\i}}n-Pl{\={\i}}{\v{c}}ko theorem \cite{Petunin74} (see also \cite{God87})  yields the following fact: Let $X$ be a separable Banach space and $Z\subseteq X^*$ be a separable normed-closed subspace contained in the set of norm-attaining functionals. 
Then $Z^{**}=\overline{Z}^{w^*}$ isometrically.
Indeed, denote $Y=X/Z_\perp$.
Then $Y^*=\overline{Z}^{w^*}$ isometrically.
Moreover $Z$ clearly separates points of $Y$. Also $Z$ is a subset of norm-attaining functionals on $Y$. This last claim follows from the facts that $\norm{z}_{Y^*}=\norm{z}_{X^*}$ and $Z \subset NA(X)$. It then follows from Petun{\={\i}}n-Pl{\={\i}}{\v{c}}ko theorem that $Y$ is isometric to $Z^*$, and therefore $Z^{**}$ is isometric to $\overline{Z}^{w^*}$. Thus applying the above fact to $Z=\lip_0(M)$ -- notice that $\lip_0(M)$ is contained in the set of norm attaining functionals on $\F(M)$ thanks to \cite[Lemma~2.3]{DaletCompact} -- we get $\CF_0(M)\subseteq \lip_0(M)^{**}$ and (5) follows with $\alpha=1$ by Goldstine's theorem.

The very last claim follows from the facts $Fact_{p1u} (M, N) \subseteq \CF_0 (M,N)$ in general and the weak$^*$-closedness of $\CF_0 (M)$ due to Proposition~\ref{p:CFwsClosed}.
\end{proof}

We continue with some more concrete conditions on $M$ that are sufficient for the inclusion $\CF_0(M,N) \subset Fact_{p1u}(M,N)$ to hold for every metric space $N$.
\begin{prop} \label{p:cpt-c1r}
		Let $M$ be compact and countably 1-rectifiable, and $N$ be any metric space. If $f\in \CF_0(M,N)$ then $f(M)$ is p-1-u. In particular, $f$ factors through a compact p-1-u space.  
  	\end{prop}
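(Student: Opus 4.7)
The plan is to reduce the claim to showing $\mathcal{H}^1(f(M)) = 0$; once we have this, pure 1-unrectifiability of $f(M)$ is automatic, since for any Lipschitz $\gamma \colon A \to f(M)$ (with $A \subset \mathbb{R}$) one has $\gamma(A) \subset f(M)$ and hence $\mathcal{H}^1(\gamma(A)) \leq \mathcal{H}^1(f(M)) = 0$. The factorization through a compact p-1-u space then follows by writing $f = \iota \circ \tilde f$, where $\tilde f \colon M \to f(M)$ is $f$ with restricted codomain and $\iota \colon f(M) \hookrightarrow N$ is the inclusion; $f(M)$ is compact as the continuous image of the compact space $M$, and it is pointed since $0 = f(0) \in f(M)$.

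To show $\mathcal{H}^1(f(M)) = 0$, I would use the definition of countable 1-rectifiability to write $M = E \cup \bigcup_{i \in I} \gamma_i(A_i)$ with $I$ countable, $A_i \subset \mathbb{R}$, each $\gamma_i \colon A_i \to M$ Lipschitz, and $\mathcal{H}^1(E) = 0$. Since $f$ is Lipschitz, $\mathcal{H}^1(f(E)) \leq \|f\|_L \cdot \mathcal{H}^1(E) = 0$. It remains to show $\mathcal{H}^1(f \circ \gamma_i(A_i)) = 0$ for each $i$.

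The slight technical point is that the $A_i$ need not be compact, whereas Proposition~\ref{prop:curveflat}~(ii) is stated for Lipschitz curves defined on a compact $K \subset \mathbb{R}$. This is easily handled: each $\gamma_i$ extends by uniform continuity to its closure $\overline{A_i}$ (still with the same Lipschitz constant into the complete space $M$), and $\overline{A_i} = \bigcup_{n \in \mathbb{N}} (\overline{A_i} \cap [-n,n])$ is a countable union of compact sets $K_{i,n}$. Applying Proposition~\ref{prop:curveflat}~(ii) to each $\gamma_i \restricted_{K_{i,n}}$ (which is a Lipschitz curve in $M$ and $f$ is curve-flat) yields $\mathcal{H}^1(f \circ \gamma_i(K_{i,n})) = 0$, and $\sigma$-additivity of $\mathcal{H}^1$ gives $\mathcal{H}^1(f \circ \gamma_i(\overline{A_i})) = 0$, hence $\mathcal{H}^1(f \circ \gamma_i(A_i)) = 0$.

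Summing over the countable index set, $\mathcal{H}^1(f(M)) \leq \mathcal{H}^1(f(E)) + \sum_{i \in I} \mathcal{H}^1(f \circ \gamma_i(A_i)) = 0$, which concludes the argument. There is no significant obstacle here; the proof is essentially a bookkeeping exercise combining the curve-flatness of $f$, the Lipschitz behavior of $\mathcal{H}^1$ under $f$ on the exceptional null set $E$, and the trivial observation that subsets of $\mathcal{H}^1$-null metric spaces are p-1-u.
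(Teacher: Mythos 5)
Your proof is correct, but it takes a genuinely different route from the paper. The paper obtains the proposition in two lines by chaining its main Theorem~\ref{t:flagship} (curve-flat $\Rightarrow$ $\widehat{f}$ fixes no copy of $L_1$) with the implication $(3)\Rightarrow(1)$ of Proposition~\ref{p:RN_eq_SRN_in_1_rectifiable_spaces}, so it passes through the Banach-space machinery ($L_1$-fixing, Godard's theorem, and ultimately Bate's lemma via Theorem~\ref{t:flagship}). You instead prove $\mathcal H^1(f(M))=0$ directly and purely metrically: decompose $M$ using countable $1$-rectifiability, kill the exceptional set by the Lipschitz scaling of $\mathcal H^1$, kill each piece $f\circ\gamma_i(A_i)$ by applying Proposition~\ref{prop:curveflat}(ii) to the Lipschitz extension of $\gamma_i$ to $\overline{A_i}$ restricted to the compacts $\overline{A_i}\cap[-n,n]$ (the extension is legitimate since $M$ is complete), and conclude by countable subadditivity; the passage from $\mathcal H^1$-negligibility to pure $1$-unrectifiability of $f(M)$ and the factorization $f=\iota\circ\tilde f$ are as in the paper. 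What each approach buys: the paper's argument is essentially free once Theorem~\ref{t:flagship} and Proposition~\ref{p:RN_eq_SRN_in_1_rectifiable_spaces} are in place and displays the result as a corollary of the main theorem, whereas yours is self-contained and far more elementary, relying only on Kirchheim's Area Formula through Proposition~\ref{prop:curveflat}; moreover, your argument never uses compactness for the negligibility statement (only completeness, to extend the $\gamma_i$), so it directly yields the statement "$f\in\CF_0(M,N)$ with $M$ complete and countably $1$-rectifiable implies $\mathcal H^1(f(M))=0$", which the paper only gets in Remark~\ref{Remark:H1_negligible_DP} via the compact-determination machinery of Proposition~\ref{prop:DPisCompactlyDetermined}.
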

 \begin{proof}
Theorem~\ref{t:flagship} implies that $\hat{f}$ does not fix any copy of $L_1$ and so $\mathcal{H}^1 (f(M))=0$ by Proposition~\ref{p:RN_eq_SRN_in_1_rectifiable_spaces}.     
Thus $f(M)$ is compact and p-1-u as claimed.
 \end{proof}
 \begin{remark}\label{r:cpt-c1r}
 When $M$ is compact and countably 1-rectifiable we have that
  $B_{\lip_0(M)}$ is weak$^*$-dense in $B_{\CF_0(M)}$. 
  This can be of course seen from Theorem~\ref{t:MainOfSection6} combined with Proposition~\ref{p:cpt-c1r}.
  But we can also get a bit stronger approximation:
  since $\overline{f(A)}=f(A)$ by compactness, \cite[Lemma 4.8]{GPP23} guarantees that there exists a sequence $(\varphi_n)_n$ of locally constant functions in $B_{\Lip_0 (A)}$ such that $\varphi_n \rightarrow f$ in the weak$^*$ topology. 
 \end{remark}

\subsection{A curve-flat function which does not factor through a p-1-u space}

The following provides a negative answer to the question of whether the condition ``$f \in Fact_{p1u}$" can be added to the list of equivalences in Theorem \ref{t:flagship}.

\begin{example}
\label{cor:Example_CF_not_Factp-1-u}
\textit{There exists a compact $M$ such that $\CF_0(M) \not\subset Fact_{p1u}(M,\Real)$.
In fact we have that each non-constant $f\in \CF_0(M)$ does not factor through any p-1-u space.}
\end{example}
\begin{proof}
    The example of such $M$ comes from \cite[Example~5.25]{AGPP}.
	Consider the standard middle thirds Cantor set $\mathcal C \subset [0,1]$ and let $\beta := \log_3(2)$ be the Hausdorff dimension of $\mathcal C$. 
 The Cantor function $f: \mathcal C^\beta \to [0,1]$ is monotone, surjective, and 1-Lipschitz (\cite[Proposition 10.1]{Cantor}), where $\mathcal C^\beta$ denotes the snowflake space. 
 Let $(M,d)$ be a metric space obtained by ``filling in the gaps" of $\mathcal C^\beta$ with geodesics. In fact there are many ways to do the above but we equip $M$ by the metric which is the largest possible among all such metrics. We will not use this particularity until Remark~\ref{r:ExampleNotCounter}.
The Cantor function $f$ extends to a 1-Lipschitz map $f: M \to [0,1]$ that is constant on each geodesic. 
Observe that $f$ is curve-flat because $\mathcal C^\beta$ is purely 1-unrectifiable and $f$ is constant on each of the countably many geodesics. 
On the other hand, note that every locally flat function is constant. Hence, $M_{\lip}=\{0\}$ and so $f\notin Fact_{\pi:M \to M_{\lip}}(M,\Real)$. 
\end{proof}

\begin{remark}\label{remark:M_CF_isometric_to_interval}
  It might be an interesting research project to study for which (non-p-1-u) compacts $K$ there exists compact $M$ such that  $M_{\CF}=K$.
			For instance, it can be shown that $M_{\CF}$  corresponding to the space from Example~\ref{cor:Example_CF_not_Factp-1-u} is isometric to the unit interval $[0,1]$ with the standard metric. 
   Indeed, if $f\colon M\rightarrow[0,1]$ is the extended Cantor function described in Example~\ref{cor:Example_CF_not_Factp-1-u}, the function $f_{\CF_0}\colon M_{\CF}\rightarrow [0,1]$, where $f_{\CF_0}([x])=f(x)$ for all $[x]\in M_{\CF}$, is the desired isometry. 
   To see this, note first that, since $f$ is curve-flat and $1$-Lipschitz, we have that $d([x_1],[x_2])=d_{\CF}(x_1,x_2)\geq |f(x_1)-f(x_2)|$ for all $x_1,x_2\in M$.
   It is then enough to show that for any given curve-flat $1$-Lipschitz function $g\in \Lip_0(M)$, one has $|g(x_1)-g(x_2)|\leq |f(x_1)-f(x_2)|$ for all $x_1,x_2\in M$. 
   Hence, fix such a function $g$ and two points $x_1,x_2\in M$. 
   Since $g$ and $f$ are curve-flat, they are constant on all geodesics of $M$. 
   Therefore, there exist $\tilde{x}_1,\tilde{x}_2\in \mathcal C^\beta\subset M$ such that $g(\tilde{x}_i)=g(x_i)$ and $f(\tilde{x}_i)=f(x_i)$ for $i=1,2$. 
   We may assume then without loss of generality that $x_1$ and $x_2$ belong to $\mathcal C^\beta$ with $x_1<x_2$ (in the real line ordering). 
   Now, using again that $g(M)$ is contained in $g(\mathcal C^\beta)$, we get the estimate
		$$|g(x_1)-g(x_2)|\leq \lambda([g(x_1),g(x_2)])\leq \mathcal{H}^1(([x_1,x_2]\cap C)^\beta)= \mathcal{H}^\beta([x_1,x_2]\cap \mathcal C),$$
		where $([x_1,x_2]\cap \mathcal C)^\beta$ denotes the snowflake of $[x_1,x_2]\cap \mathcal C$, isometrically contained in~$M$. 
  Finally, by \cite[Proposition 5.5]{Cantor}, we have that \[|f(x_1)-f(x_2)|=\lambda(f([x_1,x_2]\cap \mathcal C))=\mathcal{H}^\beta([x_1,x_2]\cap \mathcal C),\] and the conclusion follows.
	\end{remark}

\begin{remark}\label{r:ExampleNotCounter}
Let $M$ be the compact metric space considered in Example~\ref{cor:Example_CF_not_Factp-1-u}.
We claim that $\widehat{f} : \F(M) \to \F(\Real)$ factors through $\ell_1$ for every $f\in \CF_0(M)$.  
In fact, the following much stronger property is true: there exists a compact p-1-u $P\subset M$ (such that $\Free(P)\simeq \ell_1$) and a weak$^*$ continuous extension operator $E:\Lip_0(P) \to \Lip_0(M)$ such that $E\circ R(f))=f$ for every $f\in \CF_0(M)$ where $R:\Lip_0(M) \to \Lip_0(P)$ is the restriction operator. 
It then follows that $\hat{f}=\hat{f}\circ R_*\circ E_*$ where $R_*:\Free(P) \to \Free(M)$ and $E_*:\Free(M) \to \Free(P)$ are the corresponding pre-adjoints.
Indeed, let $P=\mathcal C^\beta$ and define $E$ as the ``affine extension'' operator. In order to check, that this operator is bounded, we observe that $E=\overline{g}^*$ where $g: M \rightarrow \mathcal{F} (\mathcal{C}^\beta)$ is defined as follows:
For each $x \in \mathcal{C}^\beta \subset M$, we let $g(x) := \delta(x)$. 
Next, if $x \in M \setminus \mathcal{C}^\beta$, then $x$ belongs to a unique geodesic which fills a gap of $\mathcal{C}^\beta$, say between the points $x_\ell, x_r \in \mathcal{C}^\beta$. 
In this case, we put $\lambda_x=\frac{d(x,x_\ell)}{d(x_\ell,x_r)}$ and define $g(x) := (1-\lambda_x)\delta(x_\ell) + \lambda_x \delta(x_r)$. 
It is not hard to prove that $g$ is $1$-Lipschitz. 
For instance, let $x \in M \setminus \mathcal{C}^\beta$ and $y \in C^\beta$. 
Suppose that the point $x$ lies in the geodesic $[x_\ell, x_r]$ with $d(x,y) = d(x, x_\ell) + d(x_\ell,y)$. Note that 
\begin{align*}
    \|g(x)-g(x_\ell)\| &= \| (1-\lambda_x ) \delta (x_\ell) + \lambda_x \delta (x_r) - \delta(x_\ell) \| = \lambda_x d(x_\ell, x_r) = d(x,x_\ell) 
\end{align*}
and $\|g(x_\ell) - g(y) \| = d(x_\ell, y)$ since $x_\ell, y \in \mathcal{C}^\beta$. 
Thus, observe that 
\begin{align*}
\|g(x)-g(y) \| &\leq \|g(x)-g(x_\ell)\| + \|g(x_\ell) - g(y) \| = d(x, x_\ell) + d(x_\ell, y) = d(x,y).
\end{align*} 
Where the very last equality follows from our choice of the metric on $M$.
The other cases being similar, 
this proves that $g$ is $1$-Lipschitz; thus its linearization $\overline{g}: \mathcal{F}(M) \rightarrow \mathcal{F}(\mathcal{C}^\beta)$ is a norm-one bounded linear operator.
Finally, $\F(\mathcal C^\beta)$ is isomorphic to $\ell_1$ thanks to \cite[Theorem~8.49]{Weaver2}.

\end{remark}
 	


\subsection{An excursion to the non compact setting}

When $M$ is not necessarily compact, we still have the next elementary but interesting fact.
\begin{lem} \label{LemmaSPU}
	Let $M,N$ be metric spaces and $f:M \to N$ a Lipschitz function which factors through a compact purely 1-unrectifiable metric space $P$, i.e. there are Lipschitz functions $g,h$ such that the following diagram commutes:
			$$ \xymatrix{
		M \ar[rd]_g \ar[rr]^f  & & N  \\
			& P \ar[ur]_{h}  &
		} $$ 

	Then for all $x,y \in M$ such that $f(x)\neq f(y)$ and for all $\varepsilon>0$, there is $\psi \in \lip_0(M)$ with $\norm{\psi}_L \leq 1$ such that $$\displaystyle\abs{\psi(x)-\psi(y)}\geq \frac{d(f(x),f(y))}{\norm{h}_L\norm{g}_L}-\varepsilon.$$
\end{lem}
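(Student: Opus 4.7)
The strategy is to build $\psi$ as a normalization of $\tilde\psi\circ g$, where $\tilde\psi\in\lip_0(P)$ is a uniformly locally flat function on $P$ that well-approximates a suitable $1$-Lipschitz separating function pulled back from $N$ via $h$. The proof then reduces to three routine ingredients: a McShane-type witness of the distance on $N$, a weak$^*$-approximation on the compact p-1-u space $P$, and the easy verification that composing a locally flat function with a Lipschitz function remains locally flat.

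First, I would produce $\varphi\in\Lip_0(N)$ with $\|\varphi\|_L\leq 1$ and $\varphi(f(x))-\varphi(f(y))=d(f(x),f(y))$ (for instance, a basepoint-shift of $z\mapsto d(z,f(y))$). Precomposing with $h$ yields $\tilde\varphi:=\varphi\circ h\in\Lip_0(P)$ with $\|\tilde\varphi\|_L\leq \|h\|_L$ and $|\tilde\varphi(g(x))-\tilde\varphi(g(y))|=d(f(x),f(y))$.

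Second -- this is the only step requiring genuine input -- since $P$ is compact and purely $1$-unrectifiable, Theorem~A of \cite{AGPP} (invoked in exactly the same way as in the proof of Theorem~\ref{t:MainOfSection6}, implication $(1)\Rightarrow(5)$) gives $\lip_0(P)^{**}=\Lip_0(P)$ isometrically. Combining Goldstine's theorem with the fact that the weak$^*$ topology and the pointwise topology agree on bounded subsets of $\Lip_0(P)$, I can find, for any $\delta>0$, a function $\tilde\psi\in\lip_0(P)$ with $\|\tilde\psi\|_L\leq \|h\|_L$ such that $|\tilde\psi(g(x))-\tilde\varphi(g(x))|<\delta/2$ and $|\tilde\psi(g(y))-\tilde\varphi(g(y))|<\delta/2$, hence
\[
|\tilde\psi(g(x))-\tilde\psi(g(y))|\geq d(f(x),f(y))-\delta.
\]

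Third, set $\psi_0:=\tilde\psi\circ g$. For $x_1,x_2\in M$ with $g(x_1)\neq g(x_2)$, the identity
\[
\frac{|\psi_0(x_1)-\psi_0(x_2)|}{d(x_1,x_2)}=\frac{|\tilde\psi(g(x_1))-\tilde\psi(g(x_2))|}{d(g(x_1),g(x_2))}\cdot\frac{d(g(x_1),g(x_2))}{d(x_1,x_2)}
\]
shows that $\psi_0\in\lip_0(M)$ with $\|\psi_0\|_L\leq \|h\|_L\|g\|_L$, while $|\psi_0(x)-\psi_0(y)|\geq d(f(x),f(y))-\delta$. Normalizing by $\psi:=\psi_0/(\|h\|_L\|g\|_L)$ and choosing $\delta:=\varepsilon\,\|h\|_L\|g\|_L$ at the start yields the conclusion. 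The only nontrivial point is the weak$^*$-density of $B_{\lip_0(P)}$ in $B_{\Lip_0(P)}$ when $P$ is compact p-1-u, which is precisely what Theorem~A of \cite{AGPP} supplies; everything else is bookkeeping.
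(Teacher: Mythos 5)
Your proof is correct and follows essentially the same route as the paper: both arguments rest on Theorem~A of \cite{AGPP} for the compact p-1-u space $P$ (equivalently, the weak$^*$-density of $B_{\lip_0(P)}$ in $B_{\Lip_0(P)}$), then compose the resulting locally flat function with $g$ and rescale. The only difference is cosmetic: you approximate the pulled-back distance witness $\varphi\circ h$ at the two points $g(x),g(y)$, whereas the paper directly picks $\varphi\in B_{\lip_0(P)}$ almost attaining $d(g(x),g(y))$ and then uses $d(f(x),f(y))\leq\norm{h}_L d(g(x),g(y))$ — the two bounds coincide, so nothing is gained or lost.
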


\begin{proof}
	We first observe that 
	$$d(f(x),f(y))=d(h(g(x)),h(g(y)))\leq \norm{h}_Ld(g(x),g(y)).$$
	Note that $f\neq 0$, so $\|h\|_L>0$ and $\|g\|_L>0$. Since $P$ is purely 1-unrectifiable and compact, by~\cite[Theorem A]{AGPP}, there exists $\varphi \in \lip_0(P)$ with $\norm{\varphi}_L\leq 1$ such that 
	$$\abs{\varphi(g(x))-\varphi(g(y))}\geq d(g(x),g(y))-\varepsilon\|g\|_L.$$
	We set $\psi=\frac{1}{\|g\|_L}\varphi\circ g$ and the conclusion follows.
\end{proof}

Finally, we deal with the case when $M$ is a complete subset of $\R$ below. 

\begin{proposition}
	\label{p:DPonRfactorsp-1-u}
Let $M\subset \Real$ and let $N$ be a metric space, both $M$ and $N$ complete. If $f \in \CF_0 (M,N)$, then $f$ factors through a complete purely 1-unrectifiable metric space.
\end{proposition}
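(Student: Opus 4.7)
The plan is as follows. Since $M$ is complete and $M \subset \R$, $M$ is closed in $\R$ and therefore $\sigma$-compact; write $M = \bigcup_{n \in \N} K_n$ as an increasing union of compact subsets (for instance $K_n := M \cap [-n, n]$, each containing the basepoint). For every $n$, the restriction $f|_{K_n}: K_n \to N$ is curve-flat, and $K_n$ is compact and countably 1-rectifiable as a subset of $\R$. Proposition~\ref{p:cpt-c1r} then yields that $f(K_n) \subset N$ is compact and purely 1-unrectifiable, so $\H^1(f(K_n)) = 0$. By $\sigma$-additivity of Hausdorff measure, $\H^1(f(M)) = 0$, so $f(M)$ itself is purely 1-unrectifiable.

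The remaining task is to construct a \emph{complete} purely 1-unrectifiable space $P$ through which $f$ factors. The naive candidate $P = f(M) \subset N$ is purely 1-unrectifiable but may fail to be complete, and its closure $\overline{f(M)}$ in $N$ can fail to be purely 1-unrectifiable (for instance with $M = \Z$ and $f(n) = n\sqrt{2} - \lfloor n\sqrt{2} \rfloor$, which is curve-flat since $\Z$ is p-1-u, yet $\overline{f(\Z)} = [0,1]$). The idea is therefore to assemble $P$ from the compact pieces $f(K_n)$ via a ``scaled graph'' construction: take
$$P := \{(f(x), \ell(x)) : x \in M\} \subset N \times [0, \infty)$$
with the sum metric, where $\ell: M \to [0, \infty)$ is a Lipschitz proper map chosen to be constant on every interval component of $M$ and, more generally, on every 1-rectifiable piece of $M$. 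Then $g(x) := (f(x), \ell(x))$ and $h(p, s) := p$ are Lipschitz and satisfy $f = h \circ g$. Completeness of $P$ follows from properness of $\ell$ together with completeness of $M$ and $N$: any Cauchy sequence $(g(x_n))$ forces $(\ell(x_n))$ to stay bounded, hence (since $\ell$ is proper) $(x_n)$ to lie in a fixed compact subset of $M$ from which a convergent subsequence extracts a limit in $P$.

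The main obstacle is the construction of $\ell$ and the verification that $P$ is purely 1-unrectifiable. For any bi-Lipschitz curve $\gamma(t) = (b(t), c(t))$ in $P$, curve-flatness of $f$ forces the $N$-coordinate $b$ to have $\H^1$-negligible image, hence to be ``flat'' and not bi-Lipschitz on any positive-measure subset of the parameter domain; by design of $\ell$ (being constant on every 1-rectifiable piece of $M$), the scale coordinate $c$ cannot bi-Lipschitzly parametrize a set of positive $\H^1$-measure inside $M$ either, yielding the desired contradiction. Constructing such an $\ell$ exploits the structure of $M$ as a closed subset of $\R$, in particular using the ample ``gaps'' $\R \setminus M$ to interpolate between discrete ``level'' values on well-separated compact pieces without creating new 1-rectifiable structure in $P$; this is the step where the hypothesis $M \subset \R$ (rather than an arbitrary compact countably 1-rectifiable space) is essential, and it is what distinguishes this proposition from the general situation excluded by Example~\ref{cor:Example_CF_not_Factp-1-u}.
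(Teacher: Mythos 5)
Your opening step is fine and agrees with the paper: restricting to the compacts $M\cap[-n,n]$ and invoking Proposition~\ref{p:cpt-c1r} (equivalently Proposition~\ref{p:RN_eq_SRN_in_1_rectifiable_spaces}) gives $\H^1(f(M))=0$, and your $\Z$, $n\sqrt2$ example correctly shows why one cannot just close up $f(M)$ in $N$. The gap is everything after that: the map $\ell$ on which the whole construction rests is never built, and the properties you impose on it are mutually incompatible in general. You need $\ell$ Lipschitz, \emph{proper} (this is exactly what your completeness argument for $P$ uses), and constant on every interval component / 1-rectifiable piece of $M$ (this is what makes $P$ p-1-u; more precisely one wants $\H^1(\ell(M))=0$ so that, by the area formula, both coordinates of any Lipschitz curve into $P$ have vanishing metric differential a.e.). Take $M=[0,\infty)\cup\{-n:n\in\N\}$ with basepoint $0$, and $f$ vanishing on the ray while $(f(-n))_n$ enumerates a dense sequence in $[0,1]$; this $f$ is $1$-Lipschitz, curve-flat and non-constant. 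If $\ell$ is constant on the ray, its level set contains the non-compact set $[0,\infty)$, so $\ell$ is not proper and your completeness proof for $P$ collapses; if $\ell$ is non-constant on the ray, then $P$ contains the isometric segment $\{f(0)\}\times\ell([0,\infty))$ and is not p-1-u. So properness must be given up and completeness of $P$ re-established by hand after collapsing the rectifiable pieces; that collapsing-and-regluing is precisely the missing work, and the final paragraph of your proposal only asserts that it can be done.

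For comparison, the paper avoids any ``height function'' on $M$: it slices $M$ into consecutive compact pieces $M_n=M\cap[x_{n-1},x_n]$ along a sequence $x_n\to\infty$, notes each $f(M_n)$ is compact and $\H^1$-null, and builds $P$ as a chain of almost disjoint isometric copies $P_n$ of $f(M_n)$ glued at the points corresponding to $f(x_n)$, with a path-type metric that adds the lengths $d(f(x_j),f(x_{j+1}))$ between non-adjacent pieces; $g$ and $h$ are the obvious maps. Pure 1-unrectifiability of $P$ is then immediate (a countable union of $\H^1$-null sets), and the completeness issue is resolved by showing that the completion of $P$ adds at most one point (two if $M$ is unbounded in both directions), which keeps it $\H^1$-null, after which $h$ extends to the completion since $N$ is complete. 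Any repair of your graph construction would in effect have to reproduce this slicing, so as it stands your text is a plausible program rather than a proof.
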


\begin{proof}
	If $M$ is bounded, hence compact, Proposition~\ref{p:cpt-c1r} yields the conclusion.
Next we will deal with the case when $\min M=0=:x_0$ and $\sup M=+\infty$. 
	Let $(x_n) \subset M$ be a strictly increasing sequence with $\lim x_n=\infty$. 
 \begin{figure}[h]
     \centering
     \includegraphics[width=0.75\linewidth]{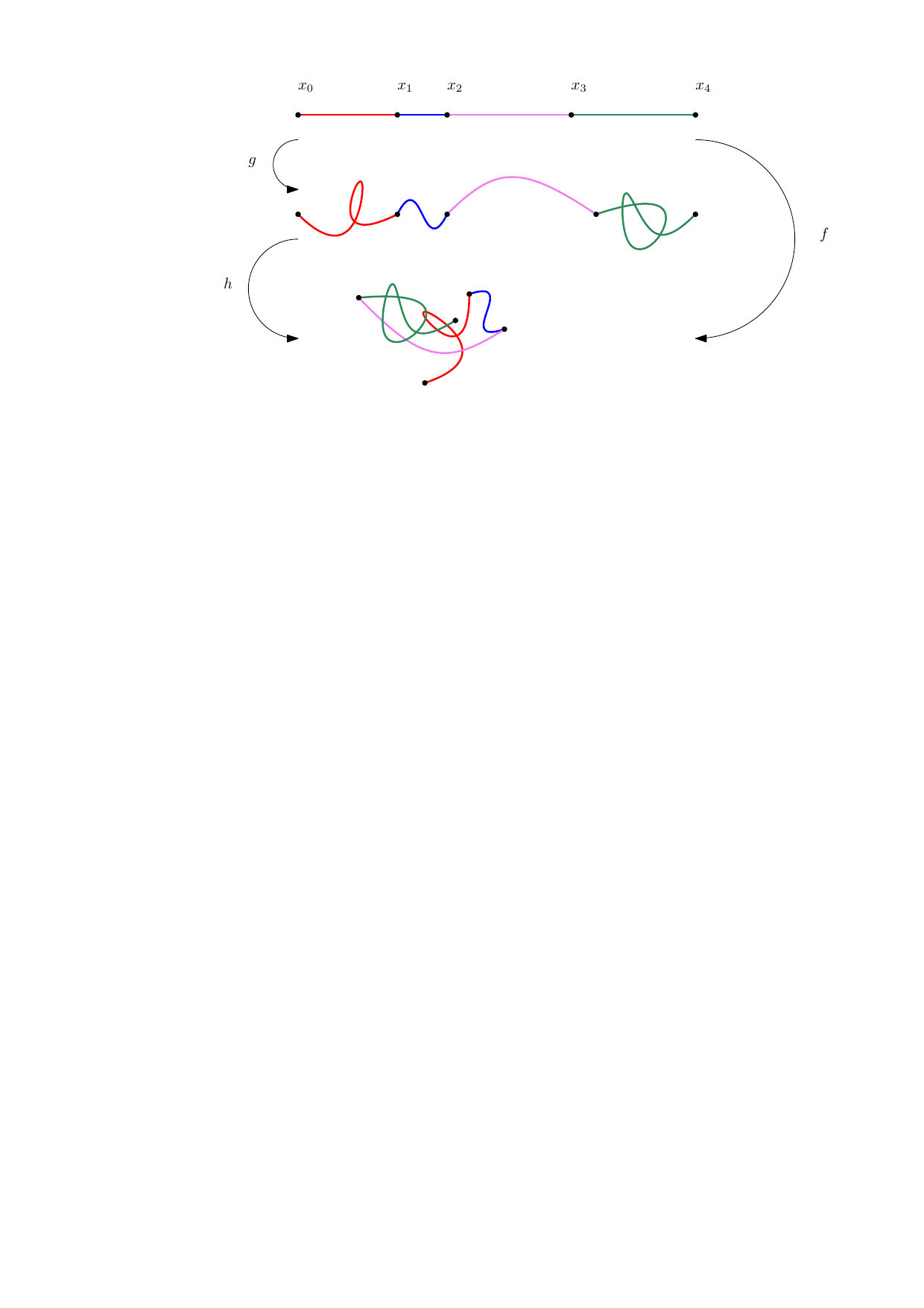}
     \caption{The construction of the factorization is best understood on a picture.}
     \label{fig:UnboundedFactorization}
 \end{figure}
 We denote $M_n=M\cap [x_{n-1},x_n]$ for each $n \in \mathbb{N}$. Now by Proposition~\ref{p:RN_eq_SRN_in_1_rectifiable_spaces} the space $f(M_n)$ is a compact $\H^1$-negligible space. We define a sequence $(P_n)$ of almost disjoint copies of $f(M_n)$. More precisely, for every $n$, let $i_n:P_n \to f(M_n)$ be an isometry such that $i_n^{-1}(f(x_n))=i_{n+1}^{-1}(f(x_n))$ and $P_n \cap P_{n+1}=\set{i_n^{-1}(f(x_n))}$.
	
	We have $f\restricted_{M_n}=i_n\circ (i_n^{-1}\circ f\restricted_{M_n})$. Now we endow the set $P=\bigcup P_n$ with the following distance $d_P$: for $x \in P_n$ and $y \in P_l$ with $n, l \in \mathbb{N}$
		\begin{align*}
			&d_P(x,y) \\ 
			&\, := \begin{cases}
				d(i_n(x),i_n(y)) , &\text{ if }  l = n \\
				d(i_n(x), f(x_n) ) + d(f(x_n), i_{n+1} (y)), &\text{ if }  l = n+1 \\
				d(i_n(x),f(x_n))+\sum_{j=n}^{l-2} d(f(x_{j}),f(x_{j+1}))+d(f(x_{l-1}),i_{l}(y))  
				&\text{ if } l \geq n+2.
			\end{cases}
		\end{align*}

We define $g : M \rightarrow P$ and $h : P \to N$ by 
		\[
		g(x)=i_n^{-1}\circ f(x) \, \text{ if } x \in M_n \,\, \text{ and } \,\,  h(x)=i_n(x) \, \text{ if } x\in P_n.
		\]
		Notice that $g$ and $h$ are well-defined, and $h \circ g = f$. It is not difficult to check that $g$ is $\|f\|_L$-Lipschitz and $h$ is $1$-Lipschitz.

Consider the completion $Q$ of $P$. 
We claim that $Q\setminus P$ is empty or a singleton. 
Indeed, let $(z_n) \subset P$ such that $z_n\to x$.
If $(z_n) \subset \bigcup_{i\in F} P_i$ for a finite set $F$, then $x \in \bigcup_{i\in F} P_i$ by compactness of $ \bigcup_{i\in F} P_i$. 
Thus $x\in P$.
It remains to address the case where no such finite set $F$ exists. In this case, 
we will show that $g(x_n) \to x$ which will finish the proof of the claim. 
Let us denote by $m_n \in \Natural$ one of the indices such that $z_n \in P_{m_n}$.
We may assume, by passing to a subsequence of $(z_n)$, that $(m_n)$ is strictly increasing.
Since $(z_n)$ is Cauchy, passing to a further subsequence, we may assume that $\sum d_P(z_n,z_{n+1})<\infty$. Taking into account that $z_n \in P_{m_n}$ and $z_{n+1} \in P_{m_{n+1}}$, observe from the definition of $d_P$ that 
\begin{align*}
d_P (g(x_{m_n}), g(x_{m_n} +1) + &\cdots + d_P ( g(x_{m_{n+1} -1} ), g(x_{m_{n+1}} )) \\ 
&\leq d_P (z_n, z_{n+2}) \leq d_p (z_n, z_{n+1}) + d_P (z_{n+1}, z_{n+2})
\end{align*} 
for every $n \in \mathbb{N}$. This implies that $\sum d_P(g(x_{n}),g(x_{n+1} )) < \infty$.
In particular, the sequence $(g(x_{n}))$ converges in $Q$. 
On the other hand, since $d_P(z_n,g(x_{m_n}))\leq d_P(z_n,z_{n+1})$ by definition of $d_P$, we conclude that $g(x_{n}) \rightarrow x$ and our claim is proved.

Therefore $Q$ is complete and p-1-u.
Since $\norm{h}_L\leq 1$ and since $N$ is complete, $h$ extends to $Q$ with the same Lipschitz constant.

In the case when $M$ is unbounded below as well as above  we will get by a similar argument that $Q \setminus P$ has at most two elements. The details are left to the reader.
\end{proof}


\subsection{Factoring through Schur spaces and RNP spaces}

We now turn to the ``factorization through Schur or RNP spaces'' problem:

\begin{quest}\label{Question:DP_factors_through_Schur}
	Let $f\colon M\rightarrow N$ be a curve-flat Lipschitz function. 
	\begin{enumerate}
		\item Does it follow that $\widehat{f}$ factors through a Schur space? 
		\item Does it follow that $\widehat{f}$ factors through a space with the RNP?
	\end{enumerate}
\end{quest}

This question remains largely open, we only have a few observations. Of course, as we already mentioned, if $f$ factors through a complete p-1-u metric space then $\widehat{f}$ factors through a Schur space with RNP. Still in the way of positive results towards Question~\ref{Question:DP_factors_through_Schur}, we have the following ``reduction to bounded sets" principle which we will apply to several cases.

\begin{proposition}\label{prop:reduction_bdd_sets}
	Let (P) be any property stable under $\ell_1$ sums (such as the Schur property or the RNP). Let $f \in \Lip_0(M,N)$ such that there exists $C>0$ for which, for every bounded set $B \subset M$, $\widehat{f}\restricted_{\F_M(B)}$ factors through a space $Z$ that has the property (P) with $\widehat{f}\restricted_{\F_M(B)} = \phi \circ q$ where $\phi:Z \to \F (N)$ and $q : \F_M (B) \to Z$ satisfy $\|\phi\|\|q\| \leq C$.
 Then $\widehat{f}$ factors through a space with property (P).
\end{proposition}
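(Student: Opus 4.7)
The plan is to construct the desired factorization through the Banach space $Z := \big(\bigoplus_{n \geq 0} Z_n\big)_{\ell_1}$, where each $Z_n$ is obtained by applying the hypothesis to the closed dyadic ball $B_n := B(0, 2^n)$. Since (P) is stable under $\ell_1$-sums, $Z$ has (P), and the goal reduces to producing bounded operators $q \colon \F(M) \to Z$ and $\phi \colon Z \to \F(N)$ with $\phi \circ q = \widehat{f}$.

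First, I would build a bounded linear ``decomposition'' operator $\overline{g} \colon \F(M) \to Z' := \big(\bigoplus_n \F_M(B_{n+1})\big)_{\ell_1}$ admitting the summation map $S\colon Z' \to \F(M)$, $S((\gamma_n)) = \sum_n \iota_n(\gamma_n)$, as a left-inverse (where $\iota_n$ denotes the canonical isometric inclusion $\F_M(B_{n+1}) \hookrightarrow \F(M)$). For this I would use a Lipschitz partition of unity: let $\tilde\alpha_n \colon [0, \infty) \to [0,1]$ be the triangle function supported on $[2^{n-1}, 2^{n+1}]$ with peak $1$ at $2^n$ (with a natural truncation at $n=0$), so that $\sum_n \tilde\alpha_n \equiv 1$ on $[0, \infty)$ and $\tilde\alpha_n$ is $2^{-n+1}$-Lipschitz. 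Set $\alpha_n(x) := \tilde\alpha_n(d(0,x))$ and define $g \colon M \to Z'$ by $g(x) := (\alpha_n(x)\delta(x))_n$; this makes sense because the support of $\alpha_n$ lies in $B_{n+1}$, and for each $x$ at most three coordinates are non-zero. Then $g(0) = 0$ and (see below) $g$ is Lipschitz with an absolute constant, hence linearizes to the desired $\overline{g}$. The identity $S \circ \overline{g} = \mathrm{Id}_{\F(M)}$ holds on $\{\delta(x)\}_{x \in M}$ since $\sum_n \alpha_n(x) = 1$, and then on all of $\F(M)$ by linearity and continuity.

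With $\overline{g}$ in place, the assembly is routine. For each $n$, the hypothesis provides $q_n \colon \F_M(B_{n+1}) \to Z_n$ and $\phi_n \colon Z_n \to \F(N)$ with $\phi_n \circ q_n = \widehat{f}\restricted_{\F_M(B_{n+1})}$ and $\|\phi_n\|\|q_n\| \leq C$, which we may normalize so that $\|\phi_n\| \leq 1$ and $\|q_n\| \leq C$ (trivially, if $\phi_n = 0$, take $Z_n = \{0\}$). Define $q' \colon Z' \to Z$ coordinatewise by $q'((\gamma_n)) := (q_n(\gamma_n))$ (bounded of norm $\leq C$), and $\phi \colon Z \to \F(N)$ by $\phi((z_n)) := \sum_n \phi_n(z_n)$ (bounded of norm $\leq 1$). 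A direct check on each coordinate yields $\phi \circ q' = \widehat{f} \circ S$, whence $\widehat{f} = \widehat{f} \circ S \circ \overline{g} = \phi \circ (q' \circ \overline{g})$ is the sought factorization of $\widehat{f}$ through $Z$.

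The only delicate point is the Lipschitz estimate on $g$: for each $n$ with $\alpha_n(x) + \alpha_n(y) > 0$, one bounds the $n$-th coordinate difference by $\alpha_n(x)\, d(x,y) + |\alpha_n(x) - \alpha_n(y)|\cdot d(0,y) \leq \alpha_n(x)\, d(x,y) + 2^{-n+1} d(x,y) \cdot 2^{n+1} = (\alpha_n(x) + 4)\, d(x,y)$, using that $\alpha_n(y) \neq 0$ forces $d(0,y) \leq 2^{n+1}$. Summing over the bounded overlap (at most six non-zero indices per pair $(x,y)$) and using $\sum_n \alpha_n(x) = 1$ yields a uniform Lipschitz bound on $g$. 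The dyadic scaling of the $\alpha_n$'s, matched to the dyadic growth of the ball diameters, is precisely what makes this estimate go through; this interplay between the Lipschitz constant of the partition of unity and the radii of the balls is the essential quantitative point of the argument.
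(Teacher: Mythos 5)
Your proof is correct and follows essentially the same strategy as the paper: decompose $\F(M)$ along a dyadic Lipschitz partition of unity into an $\ell_1$-sum of free spaces over bounded pieces, factor coordinatewise using the hypothesis (with the same normalization $\|q_n\|\le C$, $\|\phi_n\|\le 1$), and sum back into $\F(N)$. The only real difference is how the decomposition operator is obtained: the paper invokes Kalton's multiplication operators $W_n$ (citing \cite{Kalton04} and \cite{AP23}), with pieces the dyadic annuli $M_n=B(0,2^{n+1})\setminus B(0,2^{n-1})$ and the imported estimates $\sum_n\|W_n\gamma\|\le 45\|\gamma\|$ and $\gamma=\sum_n W_n\gamma$, whereas you build it self-containedly as the linearization of the vector-valued Lipschitz map $g(x)=(\alpha_n(x)\delta(x))_n$ with values in the $\ell_1$-sum of the spaces $\F_M(B_{n+1})$, proving the uniform Lipschitz bound directly and recovering the left inverse $S\circ\overline{g}=\mathrm{Id}$ on Dirac measures; this is the same operator in different clothing (your $\overline{g}$ sends $\delta(x)$ to $(\alpha_n(x)\delta(x))_n$, exactly as $T$ does), but your version avoids relying on the boundedness and weak$^*$-to-weak$^*$ continuity of the multiplication operators, at the modest price of redoing the norm estimate. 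One small slip in that estimate: the bound $|\alpha_n(x)-\alpha_n(y)|\,d(0,y)\le 4\,d(x,y)$ is justified only when $\alpha_n(y)\neq 0$; if $\alpha_n(y)=0$ and $\alpha_n(x)\neq 0$, the point $y$ may be far from $0$, but then the $n$-th coordinate equals $\alpha_n(x)\delta(x)$ and $\|\alpha_n(x)\delta(x)\|=\alpha_n(x)\,d(0,x)\le 2^{-n+1}d(x,y)\cdot 2^{n+1}$, since $\alpha_n(x)=|\alpha_n(x)-\alpha_n(y)|\le 2^{-n+1}d(x,y)$ and $d(0,x)\le 2^{n+1}$; so the same bound holds and your argument goes through as written otherwise.
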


The proof is based on Kalton's decomposition \cite[Section~4]{Kalton04}. In fact, we will use the similar construction presented in \cite[Section~2.2]{AP23}. For every $n \in \Z$ and $x \in M$, let 
$$
\Lambda_n(x) = \left\{
\begin{array}{ll}
	0 & \text{ if } d(x,0) \leq 2^{n-1}\\
	2^{-n+1}d(x,0) - 1 & \text{ if } 2^{n-1} \leq d(x,0) \leq 2^{n}\\
	2-2^{-n}d(x,0) & \text{ if } 2^n \leq d(x,0) \leq 2^{n+1}\\
	0 & \text{ if } d(x,0) \geq 2^{n+1}.
\end{array}
\right.
$$
Note that $\| \Lambda_n \|_L = \frac{1}{2^{n-1}}$. Let us denote $M_n = B(0,2^{n+1})\setminus B(0, 2^{n-1})$ so that $\supp(\Lambda_n) \subset M_n$. Next, for each $n \in \Z$, we introduce the pointwise multiplication operator $T_n : \Lip_0(M) \to \Lip_0(M)$ given by $T_n (f)(x)=f(x)\Lambda_n(x)$.
As explained in \cite[Section~2.2]{AP23}, $T_n$ defines a weak$^*$-to-weak$^*$ continuous linear operator from $\Lip_0(M)$ into $\Lip_0(M)$ with $\|T_n\| \leq 5$.  We will denote by $W_n \colon \F(M) \to \F(M)$ its pre-adjoint operator. Notice that: 
$$\forall x \in M, \quad W_n(\delta(x)) = \Lambda_n(x) \delta(x).$$ 
Since $\supp(\Lambda_n) \subset M_n$, it is readily seen that $\supp(W_n(\gamma)) \subset M_n$ for every $\gamma \in \F(M)$. Therefore we can view $W_n$ as an operator from $\F(M)$ to $\F_M(M_n) \equiv \F(M_n \cup \{0\})$.  Now for every $\gamma \in \F(M)$, one has 
$$ \sum_{n \in \Z} \|W_n \gamma \| \leq 45 \|\gamma\| \; \text{ and moreover } \; \gamma = \sum_{n \in \Z} W_n \gamma $$
where the convergence of the series is in norm. 
So  $T : \F(M) \to \left(\sum_{n \in \Z} \F_M(M_n)\right)_{\ell_1}$ given by 
$$ \forall \gamma \in \F(M), \quad T(\gamma) = (W_n(\gamma))_{n \in \Z},$$
defines a bounded operator with norm $\|T\| \leq 45$.

\begin{proof}
	Using the notation introduced above, we define $S : \Big(\sum_{n \in \Z} \F_M(M_n)\Big)_{\ell_1} \to \ell_1(\F(N))$ by:
	$$ \forall (\gamma_n)_n \in \Big(\sum_{n \in \Z} \F_M(M_n)\Big)_{\ell_1}, \quad S((\gamma_n)_n) := \Big( \widehat{f}\restricted_{\F_M(M_n)}(\gamma_n) \Big)_n.$$
	By assumption, each map $\widehat{f}\restricted_{\F_M(M_n)} $ factors through a space $Z_n$ with property (P):
				$$ \xymatrix{
		\F_M(M_n) \ar[rd]_{q_n} \ar[rr]^{\widehat{f}\restricted_{\F_M(M_n)}}  & & \mathcal{F}(N)  \\
			& Z_n \ar[ur]_{\phi_n}  &
		} $$ 
 with $\|\phi_n\|\|q_n\| \leq C$. Of course, after rescaling, we may as well assume that $\|q_n\|\le 1$ and $\|\phi_n\|\le C$. We readily deduce the next factorization for $S$:
	$$ \xymatrix{
		\Big(\sum_{n \in \Z} \F_M(M_n)\Big)_{\ell_1} \ar[rd]_{(q_n)_n} \ar[rr]^{S}  & &\ell_1(\F(N)) \\
			&  \Big(\sum_{n \in \Z} Z_n \Big)_{\ell_1} \ar[ur]_{(\phi_n)_n}  &
		} $$ 
 where the operators $(q_n)_n$ and $(\phi_n)_n$ are well-defined bounded linear operators, with $\|(q_n)_n\|\le 1$ and $\|(\phi_n)_n\|\le C$. Notice that $\big(\sum_{n \in \Z} Z_n \big)_{\ell_1}$ has the property (P), since this property is stable under $\ell_1$-sums. 
	To conclude, we introduce $R : \ell_1(\F(N)) \to \F(N)$ given by $R((\mu_n)_n) = \sum_n \mu_n$. It is an easy exercise to check that $\widehat{f} = R \circ S \circ T = R \circ (q_n)_n \circ (\phi_n)_n \circ T$.
\end{proof}

Practically speaking, in the Propositions \ref{p:cpt-ulf} and \ref{p:cpt-c1r} concerning $Fact_{p1u}$, we haven't explicitly mentioned it, but the factorizations are actually uniformly bounded: Let $M$ be a proper metric space and $N$ any metric space. If $f \in \lip_0 (M,N)$, then $\hat{f}$ factors through a Schur space with RNP. Indeed, let $B \subseteq M$ be a bounded set. Then $B$ is compact and $f \restricted_{B} \in \lip_0 (B,N)$. Thanks to Proposition \ref{p:cpt-ulf}, we have that the following diagram commutes:
$$ \xymatrix{
		B \ar[rd]_{\pi_B} \ar[rr]^{f\restricted_{B}}  & & N \\
			& B_{\lip}  \ar[ur]_{(f \restricted_B)_{\lip}}  &
		} $$ 
Notice that $\|\pi_B\| \leq 1$, $\| (f \restricted_B)_{\lip} \|_L  \leq \|f\|_L$ (the upper bounds are not depending on the choice of $B$), and $B_{\lip}$ is p-1-u. Now, Proposition~\ref{prop:reduction_bdd_sets} concludes that $\hat{f}$ factors through a Schur space with RNP.

On the other hand, suppose that $M$ is a proper metric space which is countably $1$-rectifiable and $f \in \CF_0 (M,N)$. Then $\hat{f}$ does not fix a copy of $L_1$; so by Remark~\ref{Remark:H1_negligible_DP} we obtain that $f(M)$ is $\mathcal{H}^1$-negligible. It follows that for any bounded subset $B$ of $M$, the set $f(B)$ is compact and $\mathcal{H}^1$-negligible. Therefore, $f$ satisfies the assumption in Proposition \ref{prop:reduction_bdd_sets}. Summarizing, we observe the following corollary. 

\begin{cor}
    Let $M$ be a proper metric space and $N$ any metric space.
        \begin{enumerate}
        \item If $f \in \lip_0 (M,N)$, then $\hat{f} : \mathcal{F}(M) \rightarrow \mathcal{F}(N)$ factors through a Schur space with RNP. 
        \item Assuming $M$ is countably $1$-rectifiable, if $f \in \CF_0 (M,N)$, then $\hat{f}$ factors through a Schur space with RNP. 
    \end{enumerate}
\end{cor}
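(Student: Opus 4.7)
The plan is to invoke Proposition~\ref{prop:reduction_bdd_sets} with the property (P) taken to be \emph{``has both the Schur property and the RNP''}; each of SP and RNP is stable under $\ell_1$-sums, hence so is their conjunction. Since $M$ is proper, the closure $\overline{B}$ of any bounded set $B \subset M$ is compact and satisfies $\F_M(B) \subseteq \F_M(\overline{B})$, so it suffices to produce factorizations of $\widehat{f}\restricted_{\F_M(\overline{B})}$ through a Schur-plus-RNP space with norm bounds that do not depend on $B$.

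For part~(1), I would fix a bounded $B \subset M$ and put $K = \overline{B}$. The restriction $f\restricted_K$ lies in $\lip_0(K,N)$, so Proposition~\ref{p:cpt-ulf} provides a factorization $f\restricted_K = h_K \circ \pi_K$ through the compact p-1-u space $K_{\lip}$; a direct inspection of the canonical construction gives $\|\pi_K\|_L \leq 1$ and $\|h_K\|_L \leq \|f\|_L$. Linearizing yields a factorization of $\widehat{f}\restricted_{\F_M(K)}$ through $\F(K_{\lip})$, which has both SP and RNP by \cite[Theorem~C]{AGPP}. The factor norms are uniformly controlled by $\|f\|_L$, so Proposition~\ref{prop:reduction_bdd_sets} closes this part.

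For part~(2), I would again fix bounded $B$ and set $K = \overline{B}$. Since $f$ is curve-flat, Theorem~\ref{t:RN} tells us that $\widehat{f}$ fixes no copy of $L_1$; and since $M$ is complete (being proper) and countably 1-rectifiable, Remark~\ref{Remark:H1_negligible_DP} then yields $\mathcal{H}^1(f(M)) = 0$. In particular $f(K)$ is compact and $\mathcal{H}^1$-negligible, hence purely 1-unrectifiable, so $\F(f(K))$ has both SP and RNP by \cite[Theorem~C]{AGPP}. Writing $f\restricted_K = \iota \circ g$ with $g : K \to f(K)$ the corestriction of $f$ and $\iota : f(K) \hookrightarrow N$ the inclusion, one has $\|g\|_L \leq \|f\|_L$ and $\|\iota\|_L = 1$; linearizing gives the desired uniformly bounded factorization of $\widehat{f}\restricted_{\F_M(K)}$ through $\F(f(K))$, and Proposition~\ref{prop:reduction_bdd_sets} concludes.

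The only real care needed is to track the norms of the local factors and check they remain uniformly bounded as $B$ varies, which is immediate from the bounds above. The hard part --- the compact-case factorization of Proposition~\ref{p:cpt-ulf}, the consequence of Theorem~\ref{t:RN} recorded in Remark~\ref{Remark:H1_negligible_DP}, and the reduction principle of Proposition~\ref{prop:reduction_bdd_sets} --- is already in place.
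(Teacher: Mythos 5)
Your proposal is correct and follows essentially the same route as the paper: for each bounded set one restricts to its (compact, by properness) closure, factors $f$ there through the compact p-1-u space $K_{\lip}$ in case (1) (Proposition~\ref{p:cpt-ulf}) or through the compact $\mathcal H^1$-negligible image $f(K)$ in case (2) (via Theorem~\ref{t:flagship} and Remark~\ref{Remark:H1_negligible_DP}), notes the uniform norm bounds $\|\pi\|_L\le 1$, $\|h\|_L\le\|f\|_L$, and then applies Proposition~\ref{prop:reduction_bdd_sets} with (P) the conjunction of the Schur property and the RNP. This matches the paper's argument, with your explicit passage to $\overline{B}$ and the corestriction/inclusion factorization merely making precise details the paper leaves implicit.
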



\section*{Appendix: Short proof of Theorem A in \texorpdfstring{\cite{AGPP}}{[5]}}

Theorem~A in \cite{AGPP} is used repeatedly along this article. Its original proof in \cite{AGPP} is fairly complex, and it uses some underlying principles previously used to show Lemma~3.4 in~\cite{Bat20}. 
In this appendix, we include a short proof of Theorem~A in \cite{AGPP} based directly on Lemma~3.4 in \cite{Bat20}. This proof was communicated to the authors of~\cite{AGPP} by Sylvester Eriksson-Bique. We are grateful to him for allowing us to include it in this paper.

\begin{prop}\label{p:Eriksson-Bique}
Let $M$ be a compact purely $1$-unrectifiable metric space. Then $B_{\lip_0(M)}$ is w$^*$-dense in $B_{\Lip_0(M)}$.
\end{prop}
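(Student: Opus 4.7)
The plan is to embed $M$ isometrically into $X := \Free(M)$ via $\delta_M$ and to iterate Bate's Lemma~\ref{Lemma_3.4_Bate}, exploiting the isometric identification $\Lip_0(M) = X^*$. Fix $f \in B_{\Lip_0(M)}$. On norm-bounded subsets of $\Lip_0(M)$ the weak$^*$ topology coincides with the pointwise topology, and $M$ is compact, so it suffices to produce, for each $\varepsilon > 0$, some $h \in B_{\lip_0(M)}$ with $\sup_{x \in M}|h(x)-f(x)| < \varepsilon$. Each element of $\Lip_0(M)$ is then treated as a linear functional on $X$ via the canonical linearization.

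The key preliminary observation is that the hypothesis of Bate's Lemma~\ref{Lemma_3.4_Bate} holds \emph{for every} $\delta > 0$ and \emph{every} $x^* \in X^*$: pure $1$-unrectifiability of $M$ forces $\H^1(\gamma(K) \cap \delta_M(M)) = 0$ for every bi-Lipschitz curve $\gamma\colon K \to X$. Indeed, if that intersection had positive $\H^1$-measure, then $A := \gamma^{-1}(\gamma(K) \cap \delta_M(M))$ would be Borel of positive $\H^1$-measure, and Kirchheim's lemma applied to $\delta_M^{-1} \circ \gamma\restricted_A$ would yield a compact $S \subset A$ with $\lambda(S) > 0$ on which this map is bi-Lipschitz, contradicting pure 1-unrectifiability of $M$. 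Thus the hypothesis is satisfied vacuously at every iteration step regardless of the current functional.

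I would then build inductively a sequence $(f_n) \subset \Lip_0(M)$ with $f_0 := f$ as follows. At step $n \geq 1$, apply Bate's lemma to $\overline{f_{n-1}}$ with $\delta_n := 2^{-n}$ and $\eta_n > 0$ chosen much smaller than $\delta_k \theta_k$ for every $k < n$ (where $\theta_k > 0$ is the radius produced at the $k$-th step), and set $f_n := g_n \circ \delta_M - g_n(0)$, with $g_n \colon X \to \R$ the output of Bate's lemma. Then $\|f_n\|_L \leq \|f_{n-1}\|_L(1+3\delta_n)$, $|f_n - f_{n-1}| < 2\eta_n$ uniformly on $M$, and $f_n$ is $3\delta_n\|f_{n-1}\|_L$-flat at radius $\theta_n$. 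Consequently, the Lipschitz norms stay bounded by $C := \prod_k(1+3\delta_k) < \infty$, and $(f_n)$ is Cauchy in sup norm on $M$.

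Passing to the uniform limit $\tilde f := \lim_n f_n \in \Lip_0(M)$, a short estimate using the choice of the $\eta_n$'s shows that at each scale $\theta_n$ the Lipschitz constant of $\tilde f$ is bounded by $3C\delta_n + 2\sum_{k>n}\eta_k/\theta_n = O(\delta_n)$, so $\tilde f \in \lip_0(M)$. After normalizing by $C$, the resulting function is the desired element of $B_{\lip_0(M)}$ and is uniformly $\varepsilon$-close to $f$ provided $\sum_n \eta_n < \varepsilon/2$ and $(C-1)\mathrm{rad}(M) < \varepsilon/2$. The main technical point is the parameter juggling in the induction: each $\eta_n$ has to be taken small enough to preserve, in the limit, the flatness levels achieved at every previous scale. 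This is feasible precisely because only finitely many such constraints need to be satisfied at each step, so that one can pass from Bate's ``flatness at a single radius'' to genuine uniform local flatness.
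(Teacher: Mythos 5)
Your overall strategy (iterate Bate's Lemma~\ref{Lemma_3.4_Bate} on $\delta_M(M)\subset\F(M)$, using that pure $1$-unrectifiability makes its hypothesis vacuous at every step, then pass to the uniform limit) is exactly the paper's, but you discard the one estimate that makes the limit uniformly locally flat, and the surrogate you propose does not work. Concretely, the claim that ``at each scale $\theta_n$ the Lipschitz constant of $\tilde f$ is bounded by $3C\delta_n + 2\sum_{k>n}\eta_k/\theta_n$'' is false as stated: for a pair with $d(x,y)\le\theta_n$ the perturbation term is $2\|\tilde f-f_n\|_\infty/d(x,y)$, and $d(x,y)$ may be far smaller than $\theta_n$, so you cannot replace it by $\theta_n$ in the denominator. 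The natural repair — for a given pair take $m$ maximal with $d(x,y)\le\theta_m$ and compare $\tilde f$ with $f_m$ — runs into an unremovable one-step lag: you then need $\eta_{m+1}\ll d(x,y)$, but all you know is $d(x,y)>\theta_{m+1}$, and $\eta_{m+1}$ must be chosen \emph{before} the radius $\theta_{m+1}$ is produced by that very application of Bate's lemma, so it cannot be taken small relative to it. (Comparing with $f_{m'}$ for $m'\ge m+1$ fails for the opposite reason: $d(x,y)>\theta_{m+1}\ge\theta_{m'}$, so flatness of $f_{m'}$ does not apply.) Thus sup-norm smallness of the increments, even relative to all previously produced radii $\delta_k\theta_k$, does not yield $\tilde f\in\lip_0(M)$.

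The missing ingredient is the first bullet of Lemma~\ref{Lemma_3.4_Bate}, which you quote only through the norm bound $\|f_n\|_L\le\|f_{n-1}\|_L(1+3\delta_n)$ but which actually gives the pointwise, scale-free estimate
\begin{equation*}
\frac{|f_n(x)-f_n(y)|}{d(x,y)}\;\le\;\frac{|f_{n-1}(x)-f_{n-1}(y)|}{d(x,y)}+3\delta_n\|f_{n-1}\|_L \qquad (x\neq y\in M).
\end{equation*}
This is condition (3) in the paper's induction: summing it over steps $n+1,\dots,m$ shows that every later $f_m$ (and hence the limit) is still $O(\delta_n)$-flat at the \emph{old} radius $\theta_n$, with no radius appearing in any denominator; that is precisely how one passes from ``flat at a single radius at each step'' to genuine uniform local flatness, and no juggling of the $\eta_n$'s replaces it. A secondary, fixable slip: with $\delta_n=2^{-n}$ fixed, $C=\prod_k(1+3\cdot 2^{-k})$ is a constant bounded away from $1$, so your final requirement $(C-1)\mathrm{rad}(M)<\varepsilon/2$ cannot be met for small $\varepsilon$; you must either scale the $\delta_n$ with $\varepsilon$ or renormalize at each step as in Lemma~\ref{Lemma_Bate} (where $\delta$ is chosen of order $\eta/\mathrm{rad}(M)$).
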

\begin{proof}
    
Let $f$ in $B_{\Lip_0(M)}$ and $\varepsilon > 0$ be arbitrary. Set $f_0 := f$ and $\theta_0 := diam(M)$. Since $M$ is purely $1$-unrectifiable, $f_0$ is curve-flat. 
Applying Lemma~\ref{Lemma_3.4_Bate}, as we did in the proof of Lemma~\ref{Lemma_Bate}, we obtain a function $f_1\in B_{\text{lip}_0^{\frac{1}{2}} }(M) $ such that $\|f_0-f_1\|_\infty< \frac{\varepsilon}{2}$ and $\frac{|f_1(x)-f_1(y)|}{d(x,y)}\leq \frac{|f_0(x)-f_0(y)|}{d(x,y)}+ \frac{1}{2}$ for every $x\neq y \in M$. 
Denote by $\theta_1$ the radius at which $f_1$ is $\frac{1}{2}$-flat. Importantly, $f_1$ is again curve-flat since $M$ is purely $1$-unrectifiable.

Continuing inductively in this way, we obtain sequences $(f_n)_n\subset B_{\Lip_0(M)}$ and $(\theta_n)_n$ such that for all $n\in\mathbb{N}$ and all $x\neq y \in M$:
\begin{enumerate}
    \item $f_n$ is $\frac{1}{2^n}$-flat at radius $\theta_n$, 
    \item $\|f_n-f_{n-1}\|_\infty < \frac{\varepsilon}{2^n}$, and
    \item $\frac{|f_{n}(x)-f_{n}(y)|}{d(x,y)}\leq \frac{|f_{n-1}(x)-f_{n-1}(y)|}{d(x,y)}+\frac{1}{2^n}$ for every $x\neq y \in M$.
\end{enumerate}

Let $f_\infty$ be the limit of $(f_n)_n$ in the topology of uniform convergence. In particular, $(f_n)_n$ converges to $f_\infty$ in the weak$^*$-topology and $\|f-f_\infty\|_\infty<\varepsilon$.

It only remains to show that $f_\infty$ is uniformly locally flat. Let $x\neq y\in M$ and fix an arbitrary $n\in\N$. For any $m \geq n$, we have by induction
$$\frac{|f_m(y)-f_m(x)|}{d(x,y)} \leq \frac{|f_n(y)-f_n(x)|}{d(x,y)}+\frac{1}{2^n}.$$ 
Taking $m \to \infty$ yields $\frac{|f_\infty(y)-f_\infty(x)|}{d(x,y)} \leq \frac{|f_n(y)-f_n(x)|}{d(x,y)}+\frac{1}{2^n}$. Notice that this implies that $\|f_\infty\|_L \leq 1$. Morevoer, since $f_n$ is $\frac{1}{2^n}$-flat at radius $\theta_n$, we obtain that $f_\infty$ is $\frac{1}{2^{n-1}}$-flat at radius $\theta_n$. As it holds for arbitrary $n \in \mathbb{N}$, the map $f_\infty$ is uniformly locally flat.
\end{proof}

Let us note that this approach does not work to obtain Theorems \ref{t:DP} and \ref{t:RN}. Particularly, one may attempt to use a similar argument in Propositions \ref{prop:CFimpliesDPCompactCase} and \ref{prop:CFimpliesRNCompactCase}, approximating a curve-flat function by flatter and flatter functions, and hoping to take the uniform limit of a sequence of such approximations to obtain a uniformly locally flat function, whose linearization can easily be seen to be DP and RN respectively (indeed, when $M$ is compact it is a compact operator as shown in~\cite{ACP21,Jimenez,Kamowitz}). However, we crucially need each approximation to be again curve-flat in order to continue the induction, which need not be true outside of the purely $1$-unrectifiable domain case. In fact, this approach simply cannot work, since by Example 5.25 in \cite{AGPP} (also used in Example~\ref{cor:Example_CF_not_Factp-1-u}) there exists a compact metric space that admits a non-trivial curve-flat function, but where only constant functions are uniformly locally flat. Therefore, such a curve-flat function cannot be approximated at all by a uniformly locally flat function. 

\section*{Competing interests}
No competing interest is declared.

\section*{Author contributions statement}
All authors discussed the results and contributed to the final manuscript.

\section*{Acknowledgments}

We are grateful to Sylvester Eriksson-Bique for letting us include his short proof of Proposition~\ref{p:Eriksson-Bique}. {We also thank both referees for their valuable comments and for their careful and thoughtful reading of the manuscript.} G. Flores was supported by Universidad de O'Higgins (112011002-PD-17706171-9).
M. Jung was supported by June E Huh Center for Mathematical Challenges (HP086601)
at Korea Institute for Advanced Study.
G. Lancien, C. Petitjean, A. Proch\'azka and A. Quilis were partially supported by the French ANR project No. ANR-20-CE40-0006. A. Quilis was also supported by MCIN/AEI/10.13039/501100011033 (PID2021-122126NB-C33) and CNRS project PEPS JCJC 2024.


\end{document}